\documentclass[a4paper,10pt,reqno]{amsart}
\usepackage[T1]{fontenc}
\usepackage[utf8]{inputenc}
\usepackage{xcolor}
\usepackage{amssymb}
\usepackage{eucal}
\usepackage{a4wide}
\usepackage{graphicx}
\usepackage[all]{xy}
\usepackage{amsmath,mathrsfs,mathtools}
\usepackage{wasysym}
\usepackage{dsfont}
\usepackage{enumitem}
\usepackage{float}
\usepackage{fancyhdr}
\usepackage{comment}
\usepackage[labelfont=bf]{caption}
\usepackage{algorithmicx}
\usepackage{algorithm}
\usepackage{algpseudocode}
\usepackage{xpatch}
\usepackage{hyperref}
\xpatchcmd{\paragraph}{\normalfont}{{\normalfont\itshape}}{}{}

\newtheorem{assumption}{Assumption}[section]
\newtheorem{theorem}{Theorem}[section]
\newtheorem{lemma}{Lemma}[section]
\newtheorem{remark}{Remark}[section]
\theoremstyle{definition}
\newtheorem{definition}{Definition}[section]
\graphicspath{ {./Pictures/} }
\newcommand{\lp}{\left(}
\newcommand{\rp}{\right)}
\newcommand{\m}{\mathfrak{m}}
\newcommand{\der}{\partial}
\newcommand{\R}{\mathbb{R}}      

\newcommand{\N}{\mathbb{N}}      

\newcommand{\Flder}{\rightarrow}
\newcommand{\dd}{\mathrm{d}}     
\newcommand{\Lc}{\mathcal{Lc}^{\text{con}}}
\newcommand{\Lf}{\mathcal{L}^{\text{fra}}}
\renewcommand{\L}{\mathcal{L}}
\DeclareMathOperator{\diag}{diag}
\newcommand{\dis}{\text{dis}}

\begin{document}
\title{On Runge-Kutta convolution quadrature based fractional variational integrators}

\begin{abstract}
Lagrangian systems subject to fractional damping can be incorporated into a 
  variational framework by doubling the state 
  variables and introducing fractional derivatives. 
Fractional variational integrators based on backward-differentiation convolution quadrature (BDFCQ),
  combined with higher-order Galerkin methods, saturate at second-order accuracy because the multistep structure of BDFCQ does not take into account the internal stages of the Galerkin discretization. The main objective of this paper is to develop fractional variational integrators (FVIs) by combining Runge-Kutta convolution quadrature (RKCQ) for the approximation of fractional derivatives with higher-order Galerkin methods. The RKCQ approach is naturally compatible with such stage-based discretizations and is therefore better suited for the
  construction of higher-order schemes.  We are particularly interested in the CQ based on Lobatto~IIIC. Preservation properties such as energy decay, as well as convergence properties, are investigated numerically and proved for
  second-order schemes. The presented schemes reach 2nd, 4th and 6th order of
  accuracy. A brief discussion on the midpoint fractional integrator is also
  included.
\end{abstract}

\subjclass[2010]{26A33,37M99,65P10,74H15,70H25,70H30.}

\keywords{RK convolution quadrature. Restricted Hamilton's principle. Fractional operators. Fractional variational integrators.}

\author{Khaled Hariz Belgacem}
\email{hariz@math.upb.de}
\address{Department of Mathematics, University of Paderborn, Warburger Straße 100, 33098 Paderborn, Germany}

\author{Sina Ober-Bl\"obaum}
\email{sinaober@math.upb.de}
\address{Department of Mathematics, University of Paderborn, Warburger Straße 100, 33098 Paderborn, Germany}

\email{fjimenez@ind.uned.es}
\author{Fernando Jim\'enez}
\address{Departamento de Matemática Aplicada I, ETSII, Universidad Nacional de Educación a Distancia (UNED)
c. Juan del Rosal 12, 28040, Madrid, Spain.}

\maketitle

\section{Introduction}
Differential equations are extensively used to model many phenomena in science and engineering;  however, they are often not capable of describing systems having long-term memory property due to their local nature. For instance, phenomena like anomalous diffusion, deformations, viscoelastic materials~\cite{Bagley}, biological processes~\cite{Magin1, Magin2}, and other applications in different fields~\cite{Bonilla, Hilfer} require models that incorporate historical effects. In this context, fractional differential equations, which involve fractional derivatives, provide a powerful extension. Their nonlocal nature allows the current behaviour of a system to depend not only on the present but also on its entire past history, leading to more accurate and realistic representations of complex phenomena. 
We refer to~\cite{Oldham,TheBook,Igor,TheBook2} for a comprehensive overview of the fractional calculus theory, to~\cite{Tarasov} for physical applications, to \cite{chala} for applications in mechanics such as nonlocal elasticity, viscoelasticity, and 
heat conduction, and to~\cite{Sawar_Hussain_Ayaz_2025} for a recent development in cryptography.

In recent years, fractional calculus has also emerged as a powerful tool in the study of dynamical systems. Its applications extend across a wide range of models, including dissipative and conservative systems as well as constrained dynamical systems~\cite{Riewe,Riewe2,Klimek,Agra,Baleanu}. Of particular interest in this work are fractional variational problems, which extend the classical variational framework to the fractional setting. Such problems naturally give rise to fractional equations of motion, generalizing their classical counterparts.

Classical variational principles fail to capture dissipative systems~\cite{Bauer}; fractional calculus provides a natural extension of the variational framework, enabling the dissipation to be fully treated within the framework of variational principles.
This topic has been discussed by several authors~\cite{Riewe, Cresson2,Cresson3, JiOb1,JiOb2}. In particular, Lagrangian systems subject to linear  damping with constant coefficients have been discussed in~\cite{JiOb2} by means of the restricted Hamilton principle. This approach shows that the dynamical equations are sufficient but not necessary conditions for characterizing the extremals of the action. More recently,~\cite{HaJiOb} proposed a generalization employing BDFCQ for
the fractional derivatives, combined with Galerkin variational methods for the conservative part.

At the discrete level, the restricted Hamilton principle gives rise to Fractional Variational Integrators by employing two different approximations: one for the conservative part of the  Lagrangian and one for the fractional part. A well-known discretization for the conservative part can be found in~\cite{MaWe,LuGeWa}. For the fractional part, several approximations are available~\cite{Lidia2015,BoXuHu,HandbookNum,Kaibook2016}, but the chosen method must ensure that essential properties are preserved in the discrete setting, as in the continuous one. In this context, FVIs have been derived in~\cite{JiOb2} using the Grünwald-Letnikov approximation, yielding first-order accurate schemes. More recently,~\cite{HaJiOb} proposed a generalization by employing BDFCQ~\cite{Lubich2} for the fractional derivatives, combined with Galerkin variational methods for the conservative part (see~\S\ref{HO-Action}). Although the BDFCQ can in principle attain orders of accuracy up to six with suitable correction terms, in the fractional variational setting the overall accuracy of the resulting FVIs is limited to second order, because it neglects the inner nodes used in higher-order approximations of the conservative action; see~\cite{HaJiOb} for further details.

To overcome this limitation, one requires provably higher-order fractional variational integrators whose discretization of the fractional part is structurally compatible with the internal-stage structure of Galerkin approximations of the conservative action. Constructing such integrators is the problem addressed in the present work.

The rest of the paper is organized as follows. Section~\ref{Disc_Lag_Mech} gives a brief summary of the Lagrangian formalism and of higher-order Galerkin variational integrators  used
throughout. Section~\ref{FracIntegrals} recalls fractional operators and their approximations by
RKCQ. The core of the paper is in Section~\ref{sec:FVI}. After recalling the continuous restricted Hamilton principle in \S\ref{sec:restrectedCOV}, we show in~\S\ref{subsec:Discrete_Setting} how RKCQ (based on Lobatto~IIIC) can be combined with the discrete restricted Hamilton principle to derive the fractional discrete Euler-Lagrange (FDEL) equations (Theorem~\ref{thm:maintheorem}, Algorithm~\ref{alg:FractionalAlgorithm}). Section~\ref{sec:numericalresult} is devoted to the convergence analysis and numerical experiments for the proposed schemes. There, we consider two 
  models: the two-dimensional forced oscillator and the fractional Bagley-Torvik equation. For the 2-stage Lobatto~IIIC scheme applied to a quadratic 
  Lagrangian, the second-order accuracy is established analytically in
  Theorem~\ref{thm:theoretical_convergence}, while higher-order convergence    and the energy-decay behaviour of the $r$-stage Lobatto~IIIC schemes ($r=2,3,4$) are illustrated numerically in the same section. A brief discussion of the midpoint scheme (MIDCQ) is also included. Section~\ref{conclusion} concludes the paper.
\section{Discrete Lagrangian Mechanics}\label{Disc_Lag_Mech}

In this section, we give a brief summary of the Lagrangian mechanical system from both continuous and  discrete points of view. 

\subsection{Lagrangian Hamilton's Principle}
Consider a mechanical system with a $d$-dimensional configuration manifold $Q$, whose motion is described by a trajectory in the tangent bundle $TQ$ associated to $Q$, parameterized by positions 
$q(t)\in Q$ and   velocities  $\dot q(t)\in T_{q(t)}Q$. The Lagrangian system is characterized by a $C^2$-Lagrangian $L:TQ\Flder\R$, commonly given by  $L(q,\dot q)=T(q,\dot q)-V(q)$, where  $T$ and $V$ denote the kinetic and potential energies, respectively. Consider 
the action $\mathcal{L}:C^2([0,T],Q)\Flder\R$ of a mechanical system, given by
\begin{equation}\label{ContAc}
\mathcal{L}(q)=\int_0^TL(q(t),\dot q(t))\,\dd t.
\end{equation}
In Lagrangian mechanics, the trajectory is characterized by Hamilton's principle.  That is, the trajectory $q$, with fixed points $q(0)$ and $q(T)$, is the curve along which the action is stationary, namely
\[
\delta\mathcal{L}(q)=0.
\]
for all variations that satisfy $\delta q(0)=\delta q(T)=0$. The associated necessary and sufficient conditions are the Euler-Lagrange  equations
\begin{equation}\label{EL}
\frac{\dd}{\dd t}\lp\frac{\der L}{\der\dot q}(q(t),\dot q(t))\rp-\frac{\der L}{\der q}(q(t),\dot q(t))=0.
\end{equation}
These are the equations of motion for a conservative system. See~\cite{AbMa,MaWe,MarRat} for more details.

\subsection{Discrete Lagrangian Hamilton's Principle}\label{Disc}
Discrete mechanics is essentially a discrete formulation of geometric mechanics that preserves its underlying structure within a numerical framework, leading to what are also called \textit{structure-preserving methods}.  Variational integrators (VIs) are a discrete formulation of the Euler-Lagrange equation~\eqref{EL} and belong to a family of structure-preserving methods. Unlike standard discretizations of differential equations,  the construction of VIs starts by discretizing  the state space $TQ$ and the action $\mathcal{L}$~\cite{MaWe}. More concretely, consider a time discretization $\left\{t_k\right\}_{k=0}^N$ of the interval $[0,T]$ with $t_k=kh$, where $h=T/N$ is the time step. One approximates the continuous trajectory $q(t)$ by a discrete sequence $q_\dis^{}:=\left\{ q_k\right\}_{0:N}\in Q^{N+1}$ at times $t_k$, i.e.,~$q_k\approx q(t_k)$. The discrete Lagrangian $L_\dis $ has to be defined in such a way as to approximate the action over a short time interval.  Thus, define $L_\dis:Q\times Q\Flder\R$ based on two neighboring configurations $q_k$ and $q_{k+1}$ as
\begin{equation}\label{TwoPointsApprox}
L_\dis (q_k,q_{k+1})\approx \int_{t_k}^{t_{k+1}}L(q(t),\dot q(t))\,\dd t.
\end{equation}
Furthermore, the discrete action sum $\mathcal{L}_\dis :Q^{N+1}\Flder\R$ approximates the continuous one~\eqref{ContAc} as
\[
\mathcal{L}_\dis (q_\dis^{})=\sum_{k=0}^{N-1}L_\dis (q_k,q_{k+1}).
\]
The discrete variational principle is designed to mirror the continuous one. Namely, the trajectory $q_\dis =\{q_k\}_{k=0}^N$, with fixed points $q_0$ and $q_N$, is determined requiring that the action sum be stationary
\[
\delta \mathcal{L}_\dis (q_\dis )=0,
\]
for arbitrary variations $\delta q_k$ satisfying $\delta q_0 = \delta q_N =
   0$, leading to the discrete Euler-Lagrange (DEL) equation 
\begin{equation}\label{DEL}
D_2L_\dis (q_{k-1},q_{k})+D_1L_\dis (q_k,q_{k+1})=0,\qquad k=1,\ldots, N-1,
\end{equation}
where $D_i$ refers to the partial derivative with respect to the $i$-th argument. This scheme is called a variational integrator and its flow is defined implicitly by the map  $\Phi_h:Q\times Q\Flder Q\times Q$, $(q_{k-1},q_k)\mapsto (q_k,q_{k+1})$. 
The resulting integrator~\eqref{DEL} belongs to the class of structure-preserving methods~\cite{LuGeWa,Raff2023,SerFer2016}, meaning that it preserves the symplectic structure exactly, conserves momentum maps 
  associated with continuous symmetries, and exhibits good long-time energy    behaviour. Despite this advantage, this construction based on only two configurations $q_k$ and $q_{k+1}$ yields at most a second-order accurate scheme. The purpose of the following paragraph is to introduce a family of variational methods of arbitrary order. 

\subsection{Galerkin Variational Methods}\label{HO-Action}
We now consider another class of methods to generate variational integrators, known as Galerkin methods. This approach involves locally interpolating the trajectories on a finite-dimensional basis and evaluating the action integral using a suitable quadrature method~\cite{HaLe13,MaWe,SinaSaake}.

\paragraph{(1) Trajectory space} We replace the infinite-dimensional function space
$$\mathcal{C}([t_k,t_{k+1}],Q)=\left\{ q:[t_k,t_{k+1}]\Flder Q\,|\, q(t_k)=q_k,\,q(t_{k+1})=q_{k+1} \right\},$$
by a finite-dimensional one  $\mathcal{C}^s([t_k,t_{k+1}],Q)\subset \mathcal{C}([t_k,t_{k+1}],Q)$, where $\mathcal{C}^s([t_k,t_{k+1}],Q)$ denotes the space of polynomials of degree $s$.  Given $s+1$ control points $0=d_0<d_1<\cdots<d_{s-1}<d_s=1$ and $s+1$ configurations $q_k=(q_k^0,q_k^1,\ldots,q_k^{s-1},q_k^s)$, which satisfy $q_k^0=q_k$ and $q_k^{s}=q_{k+1}$, there exists a basis $\{\ell_\nu\}_{\nu=0}^s$, e.g.,~the Lagrange polynomials, for which an element $q_\dis^{} \in \mathcal{C}^s([t_k,t_{k+1}],Q)$ can be uniquely written as   a  linear combination of $\{\ell_\nu\}_{\nu=0}^s$ 
\begin{equation}\label{Polynomials}
q(t;k):=\sum_{\nu=0}^sq_k^{\nu}\,\ell_{\nu}\left(\frac{t-t_k}{h}\right),
\end{equation}
 such that $\ell_{\nu}(d_i)= \delta_{\nu i}$ and therefore $q_\dis^{} (d_\nu h;k)=q_k^\nu$ according to~\eqref{Polynomials}.  A trajectory $q_\dis^{}  : [0, T ] \to Q$, defined over the entire interval, is expressed on each sub-interval using the local interpolation $q_\dis^{}$. This gives, for all $t \in [t_k , t_{k+1}]$
\[q_\dis^{}|_{[t_k , t_{k+1}]} (t) = q_\dis^{} (t;k).\]
To ensure the continuity of $q_\dis^{}$ at the main nodes, we write for all $0 < k < N$ the condition
\[q_\dis^{} (t_k;k) = q_\dis^{} (t_k;k-1)=q_k.\]

\paragraph{(2) Quadrature for the action integral} We first replace the curve $q(t)$ and the velocity $\dot q(t)$ by their polynomial counterparts $q_\dis^{}(t;k)$, $\dot q_\dis^{}(t;k)$ in the action as
\begin{equation}\label{QuadforL_d}
    \int_{t_k}^{t_{k+1}}L(q_\dis^{}(t;k),\dot q_\dis^{}(t;k))\,\dd t,\quad k=0,\ldots, N-1.
\end{equation}
Applying a quadrature rule $(b_i,c_i)_{i=1}^r$ with $c_i\in[0,1]$ to~\eqref{QuadforL_d}, we obtain
\begin{equation}\label{DiscLag}
L_\dis(\{q_k\})\equiv L_\dis(q_k^0, \ldots,q_k^s):=h\sum_{i=1}^rb_iL(q_\dis^{}(c_i\,h;k),\dot q_\dis^{}(c_i\,h;k))\approx \int_{t_k}^{t_{k+1}}L(q(t),\dot q(t))\,\dd t.
\end{equation}
Following~\S\ref{Disc}, the action sum can be defined from~\eqref{DiscLag} by
\[
\mathcal{L}_\dis(q_\dis^{})=\sum_{k=0}^{N-1}L_\dis(\{q_k\}),
\]
and hence the discrete Hamilton principle leads to the discrete Euler-Lagrange equations
\[
\begin{split}
D_{s+1}L_\dis(q_{k-1}^0,\ldots,q_{k-1}^s)+D_1L_\dis(q_{k}^0,\ldots,q_{k}^s)=0,\\
D_iL_\dis(q_{k}^0,\ldots,q_{k}^s)=0,\quad  \forall\,i=2,\ldots,s;
\end{split}
\]
for $k=1,\ldots,N-1$ where $D_iL_\dis(q_k^{0},\ldots,q_k^{s}):=\frac{\partial L_\dis(\{q_k\})}{\partial q_k^{i-1}}$. See~\cite{MaWe, SinaSaake} for further details.

Variational integrators have been widely used for conservative systems~\cite{HaLe13,Leok2011,SinaSaake,Ca13}, nonconservative systems~\cite{SinaVerm,ShPaWo}, optimal control problems~\cite{Ca14}, and multirate systems~\cite{SinaWeGaLe,WeSinaLe,LeSina}. The discrete variational principle automatically transfers the accuracy of 
  the discrete Lagrangian to the resulting DEL equation by means of what is called {\it local variational order}~\cite{MaWe}, that is,  \textit{to obtain a geometric integrator of order $r$, it suffices to construct an  approximation of the action integral of order $r+1$}. Depending on the degree of the interpolating polynomial and choice of the quadrature rule, one obtains  VIs of different computational complexity and accuracy.  Galerkin variational methods with polynomial interpolation of degree $s$ and quadrature of order $s$ were shown to converge with order at least $s$~\cite{HaLe13} and the optimal order is generally unknown. Numerical studies~\cite{SinaSaake} indicate convergence of order $\min(2s, u)$ for Lobatto and Gauss quadrature rules where $u$ is the order of the  quadrature rule.  This was also shown analytically in~\cite{SinaVerm}.

\section{Fractional Operators and their RKCQ Approximations}\label{FracIntegrals}

\subsection{Fractional Operators}
In this section, we collect the definitions of fractional integrals and derivatives and their main properties without proofs. For more details, see~\cite{TheBook2,TheBook}.

\begin{definition}[Fractional integrals]
Let $f:[0,T]\Flder\R$ be a continuous and integrable function. The left and right Riemann–Liouville fractional integrals of order $\alpha\in \R^+$ are defined respectively in the following way:   
\begin{subequations}\label{RLInt}
\begin{align}
J^{\alpha}_{-}f(t)&=\frac{1}{\Gamma(\alpha)}\int_0^t(t-\tau)^{\alpha-1}f(\tau)\,\dd\tau, \quad t\in (0,T],\label{RLInt:a}\\
J^{\alpha}_{+}f(t)&=\frac{1}{\Gamma(\alpha)}\int_t^T(\tau-t)^{\alpha-1}f(\tau)\,\dd\tau,\quad t\in [0,T), \label{RLInt:b}
\end{align}
\end{subequations}
where $\Gamma$ is the Euler Gamma function and for $\alpha=0$,  we set $J^{0}_{-}f=J^{0}_{+}f=f$.
\end{definition}
The fractional integrals enjoy two significant properties which are crucial in applications,   the {\it integration by parts} and  the {\it semigroup property}\footnote{Formula~\eqref{IntegrationByParts} holds true  for $\alpha>0$, $f \in  L^p (a, b),\, g\in L^q (a, b)$ with $p \geq 1,\, q \geq 1$ and
$\frac{1}{p} + \frac{1}{q} \leq 1 + \alpha$, while formula~\eqref{Aditive} holds almost everywhere in $[a,b]$ for $\alpha,\beta>0$ and $f \in  L^p (a, b)$ with
$1\leq p \leq\infty $.}~\cite[Lemmas 2.3 and 2.7]{TheBook2}
\begin{subequations}\label{FracIntProperties}
\begin{align}
\int_0^Tf(t)\left(J^{\alpha}_{\sigma}g\right)(t)\, \dd t&=\int_0^T g(t) \left(J^{\alpha}_{-\sigma}f\right)(t)\, \dd t,\quad  \sigma\in\left\{-,+\right\}, \label{IntegrationByParts}\\
J_{\sigma}^{\alpha}J_{\sigma}^{\alpha}f&=J_{\sigma}^{2\alpha}f,\quad \alpha>0. \label{Aditive}
\end{align}
\end{subequations}

\begin{definition}[Fractional derivatives]
The left and right Riemann-Liouville fractional derivatives of order $\alpha\in \R^+$  are defined respectively in the following way: 
\begin{subequations}\label{RLDer}
\begin{align}
D^{\alpha}_{-}f(t)&=\frac{1}{\Gamma(n-\alpha)}\left(\frac{\dd}{\dd t}\right)^n\int_0^t(t-\tau)^{n-\alpha-1}f(\tau)\,\dd\tau,\quad\,t\in (0,T],\label{RLDer:a}\\
D^{\alpha}_{+}f(t)&=\frac{1}{\Gamma(n-\alpha)}\left(-\frac{\dd}{\dd t}\right)^n\int_t^T(\tau-t)^{n-\alpha-1}f(\tau)\,\dd\tau,\quad t\in [0,T), \label{RLDer:b}
\end{align}
\end{subequations}
with $n \in \N$, $n -1\leq \alpha < n$.
\end{definition}
In particular, for $\alpha=n\in\N$, we recover the usual derivatives, namely
\[D^{n}_{-}f(t)=f^{(n)}(t),\qquad D^{n}_{+}f(t)=(-1)^nf^{(n)}(t).\]
In variational problems, two important properties for fractional derivatives\footnote{Formula~\eqref{IntegrationByParts2} holds true  for $\alpha>0$, $f \in  I_{-\sigma}^\alpha(L^p),\, g\in I_{\sigma}^\alpha(L^q)$ with $p \geq 1,\, q \geq 1$, and
$\frac{1}{p} + \frac{1}{q} \leq 1 + \alpha$, and \eqref{Aditive2} holds for $f\in I_\sigma^{2\alpha}(L^p)$, see~\cite{TheBook} for the definition of $I_-^\alpha(L^p)$ and $I_+^\alpha(L^p)$} play a significant role:
\begin{subequations}\label{FracDerProperties}
\begin{align}
\int_0^Tf(t)\left(D^{\alpha}_{\sigma}g\right)(t)\, \dd t&=\int_0^T g(t) \left(D^{\alpha}_{-\sigma}f\right)(t)\, \dd t,\quad  \sigma\in\left\{-,+\right\}, \label{IntegrationByParts2}\\
D_{\sigma}^{\alpha}D_{\sigma}^{\alpha}f&=D_{\sigma}^{2\alpha}f,\quad \alpha>0,\label{Aditive2}
\end{align}
\end{subequations}
It is important to note that, unlike fractional integrals which satisfy the semigroup property, fractional derivatives do not in general exhibit such a property unless additional conditions are imposed, see~\cite[Theorem 2.13]{Kai} and~\cite{TheBook} for further details and proofs.

\begin{remark}
Following~\cite[Lemma 2.2]{TheBook2}, the existence of \eqref{RLDer} requires $f\in AC^{n}([0,T])$, where $AC^{n}$ denotes the space of $n$-times absolutely continuous functions; see~\cite[Equation (1.1.7)]{TheBook2} or~\cite[Definition 1.3]{TheBook}. In this case, the left Riemann-Liouville fractional derivative can be rewritten  as
\begin{equation}\label{caputo}
 D^{\alpha}_{-}f(t)=\sum_{k=0}^{n-1}\frac{f^{(k)}(0)}{\Gamma(1+k-\alpha)}t^{k-\alpha}+\frac{1}{\Gamma(n-\alpha)}\int_0^t(t-\tau)^{n-\alpha-1}f^{(n)}(\tau)\,\dd\tau. 
\end{equation}
The second term on the right hand-side of \eqref{caputo} is usually referred to as the \textnormal{left Caputo fractional derivative}. On the other hand, the property~\eqref{Aditive2} is guaranteed, for instance, when $f\in AC^{n}([0,T])$ with $f^{(k)}(0) = 0,\ k = 0, \ldots , n - 1$, and in this case, the Riemann-Liouville and the Caputo derivatives are identical.
\end{remark}

\subsection{Runge-Kutta Convolution Quadratures}\label{sec:RKCQ}
The theory of convolution quadrature methods (CQ) was developed in~\cite{Lubich1,Lubich2,Lubich3}. It provides a systematic framework to compute  the convolution integral 
\begin{equation}\label{ContConv}
\left(\kappa\ast f\right)(t):=\int_0^t\kappa(t-\tau)f(\tau)\,\dd\tau, \quad t>0
\end{equation}
efficiently without evaluating the convolution kernel $\kappa$, but instead through its Laplace transform $K(s)$ by combining quadrature rules with linear multistep methods. A particular case of convolution is the {\it retarded} fractional integral~\eqref{RLInt:a}, where the  kernel $\kappa$ is given by
\begin{equation}\label{ConvKer}
\kappa(t)=\kappa^{(\alpha)}(t)=\frac{t^{\alpha-1}}{\Gamma(\alpha)}.
\end{equation}
with the corresponding Laplace transform of the form
\begin{equation}\label{LaplaceTrans}
K^{(\alpha)}(s):=\mathscr{L}(\kappa)(s)=\int_0^{\infty}\frac{t^{\alpha-1}}{\Gamma(\alpha)}e^{-st}\, \dd t=\frac{1}{s^\alpha}.
\end{equation}

Subsequent extensions~\cite{LuOs, BaLu,BaLo} have applied CQ with Runge-Kutta methods to obtain higher-order accuracy; this is the approach we adopt in in \S\ref{sec:FVI}. 

We first begin by recalling the notion of the Runge–Kutta (RK) notation and then describe the associated convolution quadrature. We refer to~\cite{HaWa,Raff2023} for the following.  An $r$-stage RK method is described by the coefficient matrix $A=(a_{ij})_{i,j=1}^r\in\R^{r\times r}$, the vector of weights $\mathbf{b}=(b_1,\ldots,b_r)^\top\in\R^r$ and the vector of abscissae $\mathbf{c}=(c_1,\ldots,c_r)^\top\in[0,1]^r$, or by means of the standard Butcher tableau
$$\setlength{\tabcolsep}{20pt}
\renewcommand{\arraystretch}{1.5}\begin{array}{c|c}
\mathbf{c}   &  A  \\ \hline
     &  \mathbf{b} ^\top 
\end{array}.$$
Let the time step be $h = T /N$ and the grid points $t_j = jh$. An $r$-stage RK method applied to ordinary differential equations
\[u'(t)=F(t,u(t)),\quad u(0)=u_0\]
is given by 
\begin{subequations}\label{eq:RKmethod}
\begin{align}
U_{ki}&= u_k +h\sum_{j=1}^r a_{ij}F(t_k+c_jh,U_{kj}),\quad i=1,\ldots,r\\
u_{k+1}&= u_k +h\sum_{j=1}^r b_{j}F(t_k+c_jh,U_{kj}).
\end{align}
\end{subequations}
The method~\eqref{eq:RKmethod} has \textit{classical order} $p\geq 1$ and \textit{stage order} $q \leq p$, respectively (see~\cite{HaWa}) if, for sufficiently smooth $F$, and assuming in~\eqref{eq:RKmethod} that  $u_k:=u(t_k)$, the following hold
\[\big\|u_{k+1}-u(t_{k+1})\big\|=\mathcal{O}(h^{p+1}),
\]
and 
\[\big\|U_{ki}-u({t_k+c_ih})\big\|=\mathcal{O}(h^{q+1}),\quad i=1,\ldots,r.\]
Let $I$ be the identity matrix in $\R^{r\times r}$ and $\mathds{1}=(1,\ldots,1)^\top$. The corresponding stability function is given by
\begin{equation}\label{StabFunc}
R(z)=1+z \mathbf{b} ^\top (I-z  A )^{-1}\mathds{1}.
\end{equation}

\begin{assumption}\label{assumption}
From now on, we make the following assumption:
\begin{enumerate}[leftmargin=2.5em]
\item The matrix $A$ is invertible.
\item  $R(iy)\leq 1$, for all real $y$.
\item $R(z)$ is analytic for $\mathrm{Re}(z)<1$.
\end{enumerate}
\end{assumption}

The last two assumptions guarantee $A$-stability of the method, in other words, the method is stable on the entire negative half-complex plane $\mathbb{C}^-$.
Important examples of RK methods satisfying these assumptions are Radau IIA (with order $p = 2r - 1$ and stage order $q = r$) and Lobatto~IIIC methods (with order $p = 2r - 2$ and stage order $q = r-1$). 

Building on these Runge–Kutta methods with Assumption~\ref{assumption}, the associated convolution quadrature is constructed as follows. We define time grids $\tau:=\{t_k+c_ih\}_{k=0, \ldots,N}^{i=1, \ldots, r}$ and $\mathbf{t}_k:=\{t_k+c_ih\}_{i=1, \ldots,r}$ with $t_k=kh$, $t_k^i=t_k+c_ih$ and let $f:[0,T]\Flder\R^d$. Consider the collection of vectors $f(\mathbf{t}_k)=\mathbf{f}_k=\{f_k^i=f(t_k^i)\}_{i=1,\ldots,r}=\left(f(t_k^1),\ldots,f(t_k^r)\right)^\top\in\mathbb{R}^{r\times d}$ for $k=0,\ldots,N$. 
\textcolor{red}{
For $f:[0,T]\Flder\R^d$, the stage vector is $f_k=(f_k^1,\ldots,f_k^r)$ and each stage value $f_k^i\in \R^d$. Thus $\mathbf{f}_k\in (\R^d)^r=\R^{r\times d}$.
The convolution weights act only on the stage index and the spatial dimension is handled componentwise.}

Then, using the \textit{retarded} RKCQ methods, the approximation of the fractional integral $J_-^\alpha f(\mathbf{t}_k)$   is given by
\begin{equation}\label{RKConQua}
J_{-}^{\alpha}f(\mathbf t_k)\approx\mathcal{J}_{-}^{\alpha}f(\mathbf t_k):=\sum_{n=0}^{k}W_{k-n}^{(\alpha)}{\bf f}_{n}=\sum_{n=0}^{k}W_{k-n}^{(\alpha)}f({\bf t}_n)
\end{equation}
or explicitly
$$\mathcal{J}_{-}^{\alpha}{\bf f}_k= \sum_{n=0}^kW_{k-n}^{(\alpha)}\begin{pmatrix}
{f}^{1}_{n}\\
\vdots\\
{f}^{ r }_{n}
\end{pmatrix}.$$
Here, $K^{(\alpha)}$ is again as defined in~\eqref{LaplaceTrans} and the matrices $W_n^{(\alpha)}\in\R^{ r \times  r}$ are the  {\it convolution weights},  determined by the series expansion
 \begin{equation}\label{eq:series}
    K^{(\alpha)}\lp\frac{\gamma(z)}{h}\rp=\sum_{n=0}^{\infty}W_n^{(\alpha)}z^n,
 \end{equation}
where the matrix-valued function $\gamma(z)$, based on the underlying RK method, is given by 
\textcolor{red}{\[
\gamma(z)=\lp  A +\frac{z}{1-z}\mathds{1}\,\mathbf b^\top \rp^{-1}= A ^{-1}-z \frac{A ^{-1}\mathds{1}\mathbf  b^\top  A ^{-1}}{1-z\left(1-b^\top A^{-1}\mathds{1}\right)}.
\]
The identity follows from the Sherman–Morrison formula for the inverse of a rank-one perturbation of an invertible matrix~\cite{LehSaBook,LuOs}. For stiffly accurate implicit RK methods~\cite[Proposition 3.8]{HaWa}, such as Lobatto~IIIC, one has $b^\top A^{-1}\mathds{1}=1$. This is why
\[\gamma(z)=\lp  A +\frac{z}{1-z}\mathds{1}\,\mathbf b^\top \rp^{-1}= A ^{-1}-z A ^{-1}\mathds{1}\mathbf  b^\top  A ^{-1}.
\]
}
Using a different notation, which will be convenient later, the approximation at the stage time $t_k^i$ is defined by the $i$-th row of~\eqref{RKConQua} and can be computed   as \footnote{Again, the convolution quadrature can equivalently  be defined as
\[
\mathcal{J}_{-}^{\alpha}\mathbf f_k=\sum_{n=0}^{k}W_{n}^{(\alpha)}f(\mathbf t_{k-n})
\]
}
\begin{equation}\label{explicit_RKCQ}
    \mathcal{J}_{-}^{\alpha}f_k^i=\sum_{n=0}^{k} \sum_{j=1}^{r} \left[W_n^{(\alpha)}\right]_{j}^if_{k-n}^j, \quad i=1,\ldots, r.
\end{equation}
where $\left[W_n^{(\alpha)}\right]_{j}^i$ denotes the 
$(i,j)$-th entry of the weight matrix $W_n^{(\alpha)}$.
An efficient way to  compute the weights can be achieved by using fast Fourier transform (FFT) as described in~\cite{LuOs}. Following~\cite[Section 5.4]{LehSaBook}, the convolution weights $W_n^{(\alpha)}$ can be represented as a Cauchy integral formula as follows

\begin{align}
    W_n^{(\alpha)} &=\frac{1}{2\pi i}\int_{|z|=\lambda}K^{(\alpha)}\left(\frac{\gamma(z)}{h}\right)z ^{-n-1} \dd z\nonumber\\
 &=\lambda^{-n}\int_{0}^1 K^{(\alpha)}\left(\frac{\gamma(\lambda e^{-2\pi i \theta})}{h}\right)e ^{2\pi i \theta n} \dd \theta,\quad 0<\lambda<1.\nonumber\\
 &\approx \frac{\lambda^{-n}}{N+1}\sum_{\ell=0}^N K^{(\alpha)}\left(\frac{\gamma(z_\ell)}{h}\right)z_\ell^{-n}
\quad \text{(trapezoidal rule)}\label{TR}
\end{align}
where $z_\ell=\lambda e^{-2\pi i\ell/(N+1)}$. The  following  algorithm  summarizes  the  basic  steps  for  computing the convolution weights
\begin{algorithm}{}
\begin{algorithmic}[1]
\State {\bf Initial data:} $N$, $h$, $\gamma(z)$, $K^{(\alpha)}(s)$ and quadrature points $z_\ell,\, \ell =0,\ldots,N$
 \For {$\ell= 0,\ldots, N$} 
\State {\bf Diagonalize} $\gamma(z_\ell)=V_\ell D_\ell V_\ell^{-1}$
\State {\bf Apply} $K^{(\alpha)}$: $K^{(\alpha)}\left(\frac{\gamma(z_\ell)}{h}\right)=V_\ell K^{(\alpha)}(D_\ell/h) V_\ell^{-1}$,
\State {\bf Use} the FFT in equation~\eqref{TR}
\EndFor
\State {\bf Output:} $W_0^{(\alpha)},W_1^{(\alpha)},\ldots,  W_N^{(\alpha)}.$
\end{algorithmic}
\caption{Basic algorithm to compute the convolution weights}\label{alg:Fractional_weights_Algorithm}
\end{algorithm}

A good choice of $\lambda$ is $\lambda=\varepsilon^{1/(2N+1)}$ yields an error in $W_n^{(\alpha)}$ of size $\sqrt{\varepsilon}$; see~\cite{Lubich2}. Another representation of the weights $W_n^{(\alpha)}$  can be found in~\cite{BaLo}, leading also to an efficient algorithm approximating fractional integrals.

For stiffly accurate Runge–Kutta methods such as Lobatto~IIIC and Radau IIA, one has 
$$b^\top A^{-1}=e_r^\top,\quad c_r=1,\qquad \text{where}\quad  e_r^\top=(0,\ldots,0,1).$$
Therefore, the last row of~\eqref{explicit_RKCQ} yields the approximation to $\mathcal{J}_{-}^{\alpha}f_k^r=\mathcal{J}_{-}^{\alpha}f(t_{k+1})$. Thus, the numerical solution at the grid point $t_{k+1}$ is given by
 \begin{equation}\label{explicit_RKCQ2}
\mathcal{J}_{-}^{\alpha}f(t_{k+1})=\mathcal{J}_{-}^{\alpha}f_k^r=b^\top A^{-1}\sum_{n=0}^{k}W_{k-n}^{(\alpha)}f({\bf t}_n).
 \end{equation}
Now, we state the  convergence order of the RKCQ~\eqref{explicit_RKCQ2} as shown in~\cite[Theorem 3]{BaLuMar}. Here, we omit the regularity conditions on the general Laplace transform $K(s)$ for which the original theorem is established, since our focus is on $K^{(\alpha)}(s)$ as defined in~\eqref{LaplaceTrans}, which already satisfies these conditions.  
\begin{theorem}[\cite{BaLuMar}]\label{thm:RKCQ_convergence}
Let $K^{(\alpha)}$ be the Laplace transform  of the kernel which  is analytic in the half-plane $\mathrm{Re} (s) > \sigma > 0$, such that for
some real exponent $\mu$ and bounding factor $C_K(\sigma)> 0$, the operator norm is bounded as follows:
\begin{equation}\label{eq:hyperbolic_symbol}
    \left\|K^{(\alpha)}(s)\right\|\leq C_K(\sigma) |s|^\mu\quad \text{for all}\quad \mathrm{Re} (s)>\sigma.
\end{equation}
Let the underlying Runge-Kutta method satisfy Assumption~\ref{assumption}. Assume that   $\ell> \max(p+\mu+1,p,q+1)$ and let $f\in C^\ell([0,T],\R)$ satisfy $f(0)=f'(0)=\cdots=f^{(\ell-1)}(0)=0$. Then there exist constants $\bar h>0$ and $C>0$ such that, for all $h\le \bar h$ and $t\in[0,T]$, the RKCQ method~\eqref{explicit_RKCQ2}  satisfies
\begin{equation}\label{RK_error_bound}
\left\|J^{\alpha}_{-}f(t_{k+1})-\mathcal{J}_{-}^{\alpha}f(t_{k+1})\right\|\leq C h^{\min(p,q+1-\mu)}\left(\left|f^{(\ell)}(0)\right|+\int_0^t\left|f^{(\ell+1)}(s)\right|\dd s\right).
\end{equation}
The constant $C$ depends on $\bar h,T$ and the  chosen  RK method, but is independent of $h$ and $f$.
\end{theorem}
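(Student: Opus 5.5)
The plan is to follow the Laplace-domain argument of Banjai–Lubich–Melenk. The aim is to write both the exact convolution and the RKCQ approximation as contour integrals against the same inverse-Laplace representation, and to reduce the error to a stage-wise Runge–Kutta error for a scalar linear test equation.

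\textbf{Step 1: integral representation.} Since $f$ vanishes to order $\ell-1$ at $0$, extend it by zero to $t<0$. Write $f=\kappa_\ell\ast f^{(\ell)}$ with $\kappa_\ell(t)=t^{\ell-1}/(\ell-1)!$, i.e.\ $\hat f(s)=s^{-\ell}\widehat{f^{(\ell)}}(s)$. Then
\[
J^\alpha_- f(t)=\frac{1}{2\pi i}\int_{\Gamma} K^{(\alpha)}(\lambda)\lambda^{-\ell}\,\bigl(e^{\lambda\cdot}\ast f^{(\ell)}\bigr)(t)\,\dd\lambda ,
\]
with $\Gamma$ a vertical line $\mathrm{Re}\,\lambda=\sigma$. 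The choice $\ell>\mu+1$ makes this absolutely convergent by \eqref{eq:hyperbolic_symbol}.

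\textbf{Step 2: the discrete formula.} By linearity of the discrete operational calculus defined by \eqref{eq:series}, the RKCQ approximation has the same form, with $u_\lambda:=e^{\lambda\cdot}\ast g$ replaced by its Runge–Kutta approximation. Precisely, the stage vector $\mathcal J^\alpha_-$ applied to $f$ equals
\[
\frac{1}{2\pi i}\int_\Gamma K^{(\alpha)}(\lambda)\lambda^{-\ell}\, U_\lambda\,\dd\lambda ,
\]
where $U_\lambda$ collects the RK stages for $y'=\lambda y+f^{(\ell)}$, $y(0)=0$. This is the standard Lubich identity: $(\gamma(z)/h-\lambda)^{-1}$ is the generating function of the RK solution of the test equation. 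Since $b^\top A^{-1}=e_r^\top$, the last stage gives the grid value at $t_{k+1}$, exactly as in \eqref{explicit_RKCQ2}. The error is therefore
\[
\frac{1}{2\pi i}\int_\Gamma K^{(\alpha)}(\lambda)\lambda^{-\ell}\bigl(u_\lambda(t_{k+1})-u_{\lambda,k+1}\bigr)\,\dd\lambda .
\]

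\textbf{Step 3: RK error for the linear test problem with $\mathrm{Re}\,\lambda\ge\sigma$, uniformly in $h\lambda$.} This is the main obstacle. Using A-stability and analyticity of $R$ (Assumption~\ref{assumption}), I will prove a bound with order reduction:
\[
|u_\lambda(t_{k+1})-u_{\lambda,k+1}|\le C\min\bigl(h^p|\lambda|^{0},\ h^{q+1}|\lambda|^{p-q}\bigr)\cdot(\text{data}),
\]
or, more usefully, $\le C h^{\min(p,q+1-\mu')}|\lambda|^{\cdots}$. The data term is $|f^{(\ell)}(0)|+\int_0^t|f^{(\ell+1)}|$. The derivation expands the defect as the local stage error $O(h^{q+1})$ times $(I-h\lambda A)^{-1}$, with the quadrature error $O(h^{p+1})$, and sums the local errors with the bounded propagator $R(h\lambda)^n$. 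A partial summation trades one derivative of $f^{(\ell)}$ for a factor $\lambda^{-1}$, which produces the $f^{(\ell+1)}$ integral and the boundary term $f^{(\ell)}(0)$. The delicate point is keeping $h$-uniform bounds on $\lambda(I-h\lambda A)^{-1}$ and on $R(h\lambda)$ for $\mathrm{Re}\,\lambda\ge\sigma$. The first relies on invertibility of $A$ and the fact that $A$ has no eigenvalues with positive real part obstructing the resolvent. The second must cope with $|R|$ possibly being slightly larger than $1$ just to the right of the imaginary axis; I would handle it with $|R(h\lambda)|\le e^{Ch\sigma}$, which follows from analyticity of $R$ near $0$ and the fact that $R$ is bounded away from $0$. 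Interpolating between the two regimes $|h\lambda|\le1$ and $|h\lambda|\ge1$ gives a factor $|\lambda|^{p-q-1}$, up to $|\lambda|^{\epsilon}$, against $h^{q+1}$.

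\textbf{Step 4: integrate.} Insert the bound into the contour integral with $|K^{(\alpha)}(\lambda)|\le C_K|\lambda|^\mu$. The integrand is $|\lambda|^{\mu-\ell+p-q-1}$ times $h$-powers. Choose the splitting exponent so that $h^{q+1-\mu}$ or $h^p$ appears. The condition $\ell>\max(p+\mu+1,p,q+1)$ ensures integrability along $\Gamma$. This yields \eqref{RK_error_bound} with $C$ depending on $\sigma$, $T$ (through $e^{\sigma T}$), $\bar h$ and the method.
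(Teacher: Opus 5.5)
Step~1 is fine. The argument fails in Step~2, because the identity you call Lubich's does not hold for the data you feed it. Write $K(\partial_t^h)$ for the RKCQ operator with symbol $K(\gamma(z)/h)$, and $(\partial_t^h-\lambda)^{-1}$ for the RK solution operator of $y'=\lambda y+g$. The discrete operational calculus gives
\[
\frac{1}{2\pi i}\int_\Gamma K(\lambda)\lambda^{-\ell}\,(\partial_t^h-\lambda)^{-1}f^{(\ell)}\,\dd\lambda
=\bigl(K(s)s^{-\ell}\bigr)(\partial_t^h)\,f^{(\ell)}
=K(\partial_t^h)\bigl[(\partial_t^h)^{-\ell}f^{(\ell)}\bigr].
\]
Here $(\partial_t^h)^{-\ell}$ is $\ell$-fold Runge--Kutta quadrature of the stage samples of $f^{(\ell)}$. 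The RKCQ \eqref{explicit_RKCQ2}, however, applies $K(\partial_t^h)$ to the stage samples of $f$ itself, and $(\partial_t^h)^{-\ell}f^{(\ell)}\neq f$: RK integration is only accurate to the stage order (classical order at grid points). The continuous identity $\partial_t^{-\ell}f^{(\ell)}=f$ you use in Step~1 has no discrete counterpart. A one-line counterexample is $K\equiv 1$ ($\mu=0$). Then $W_0=I$ and $W_n=0$ for $n\ge1$, so the RKCQ is exact. Your error formula instead returns $f(t_{k+1})$ minus the $\ell$-fold RK quadrature of $f^{(\ell)}$, which is nonzero in general. Your representation therefore silently drops the term $K(\partial_t^h)\bigl[f-(\partial_t^h)^{-\ell}f^{(\ell)}\bigr]$.

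That dropped term is exactly what produces the order barrier.
\begin{itemize}
\item What you estimate in Steps~3--4 is a CQ error for the symbol $K(s)s^{-\ell}$, which decays like $|\lambda|^{\mu-\ell}$ with $\mu-\ell<-1-p$. In that integrand $\mu$ only decides whether $\int_\Gamma|\lambda|^{\mu-\ell+\cdots}|\dd\lambda|$ converges; it cannot appear in the power of $h$.
\item With a Step~3 bound of the kind you sketch, the outcome would be $h^{\min(p,q+1)}$. That is too good: Figure~\ref{fig:expected_convergence_RKCQ_045_075} shows the rate $r-\alpha<r=\min(p,q+1)$ for Lobatto~IIIC with $K(s)=s^\alpha$. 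So ``choosing the splitting exponent so that $h^{q+1-\mu}$ appears'' is not a derivation.
\item The missing contribution is an $O(h^{q+1})$ stage defect on which $K(\partial_t^h)$ can act with size $h^{-\mu}$. The non-physical eigenvalues of $\gamma(z)/h$ are of size $1/h$, and there $|K|\approx h^{-\mu}$.
\item Equivalently, it lives in the stiff regime $|h\lambda|\gtrsim 1$ of the test equation. There the stage error is only $O(h^{q+1})$, and $\lambda(I-h\lambda A)^{-1}\to -A^{-1}/h$ is not $h$-uniformly bounded, contrary to what you assert in Step~3.
\end{itemize}
A correct Laplace-domain proof must keep the discrete side equal to $K(\partial_t^h)$ applied to the samples of $f$ and estimate this contribution. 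The paper does not do this itself; it only quotes the result from \cite{BaLuMar}.

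A smaller point: your justification of $|R(h\lambda)|\le e^{Ch\sigma}$ is wrong. $R$ need not be bounded away from $0$ (Lobatto~IIIC has $R(\infty)=0$), and $h\lambda$ runs over a whole vertical line, not a neighbourhood of $0$. The correct reason is $|R(iy)|\le 1$ together with a uniform bound on $R'$ in the pole-free strip $0\le \mathrm{Re}\,z\le\tfrac12$ provided by Assumption~\ref{assumption}.
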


The previous theorem is applicable  to the fractional integral~\eqref{RLInt:a} when $\mu=\alpha<0$ and to the fractional derivative~\eqref{RLDer:a} in the case $\mu>0$. For the fractional derivative $D_{-}^{\alpha}$ (i.e., $\mu=\alpha$), the convolution quadrature is defined by
\begin{equation}\label{RKConQua2}
D_{-}^{\alpha}f(\mathbf t_k)\approx\mathcal{D}_{-}^{\alpha}\mathbf f_k:=\sum_{n=0}^{k}W_{k-n}^{(-\alpha)}{\bf f}_{n}.
\end{equation}

\begin{remark}\label{rem:full-order}
In the sectorial case~\cite{LuOs}, the stage rate is $q+1$, independent of the decay exponent $\mu>0$ of the kernel $K$. The rate for the last stage is determined by both error terms $h^p$ (classical order) and $h^{q+1+\mu} |\log (h)|$ (stage order improved by symbol decay). Theorem~\ref{thm:RKCQ_convergence} does not cover an internal stage error estimate for the kernel~\eqref{eq:hyperbolic_symbol}.  The estimate in~\eqref{RK_error_bound} attains the classical convergence order  $p$ if and only if
\begin{equation} \label{eq:full-order-cond}
\mu \leq q + 1 - p.
\end{equation}
Since, for all standard methods, one has $p \geq q + 1$, this condition  requires $\mu \leq 0$. Thus, for kernels of the form $K(s)=s^\mu$ with $\mu>0$, the  convergence rate $q+1-\mu$ remains strictly below $p$, and the full classical order cannot be achieved. Moreover, for methods with $p > q + 1$, such as the $r$-stage Lobatto~IIIC with $r \geq 3$, for which $p = 2r-2$ and $q = r-1$, even the case $\mu = 0$ gives stage rate $q+1 = r < 2r-2 = p$. In Figure~\ref{fig:expected_convergence_RKCQ_045_075}, our numerical experiments  show that all stages converge at the same rate $q+1-\mu$. 
\end{remark}

We apply Algorithm~\ref{alg:Fractional_weights_Algorithm} to the kernel $K(s)=s^\alpha$  to compute the weights of the $r$-stage Lobatto~IIIC (see Table~\ref{tab:Lobatto}). These weights are then used to evaluate the fractional derivatives of $f(t)= e^{-t}  \sin^6(t)$. 
The expected convergence order is $\mathcal O(h^{r-\alpha})$, and this rate is confirmed by the numerical experiments.
\begin{figure}[htbp]
\centering
\includegraphics[width=1\textwidth]{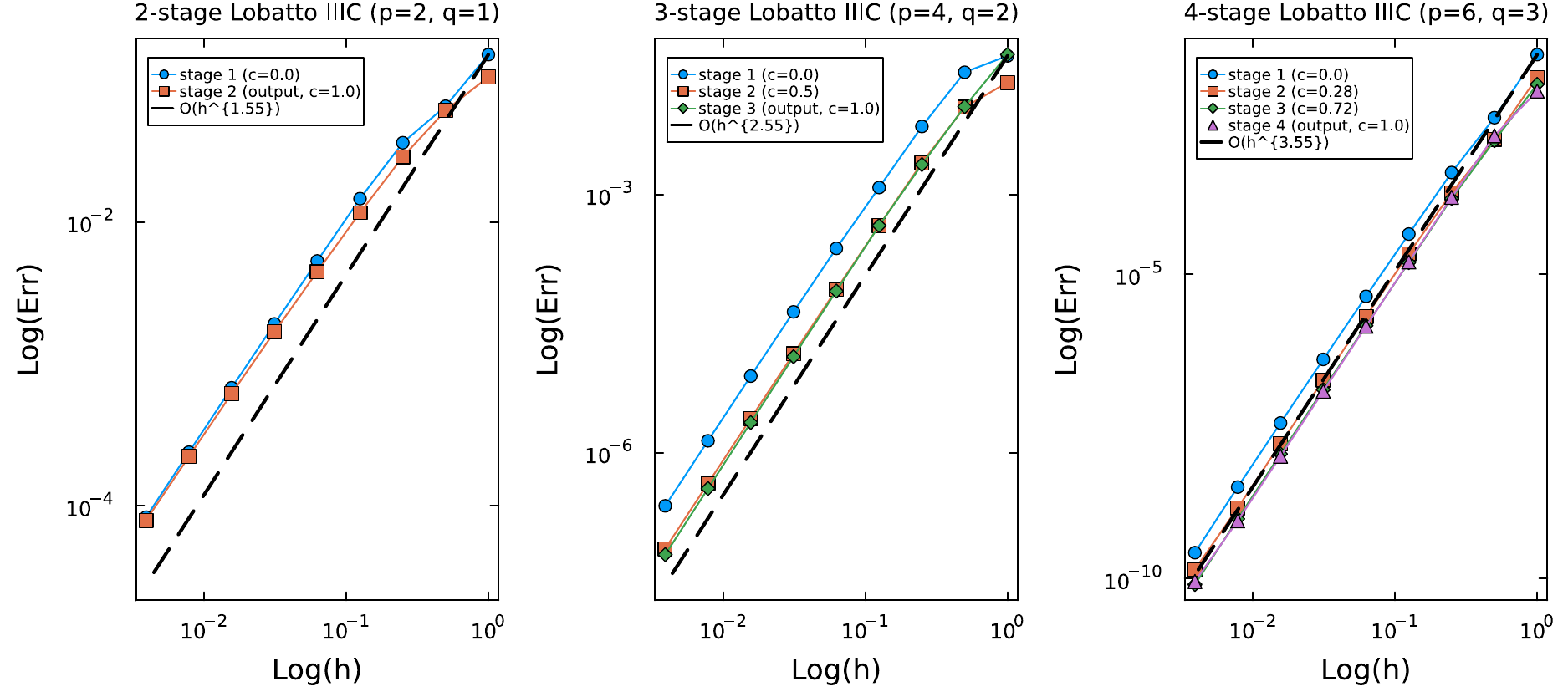}\\
\includegraphics[width=1\textwidth]{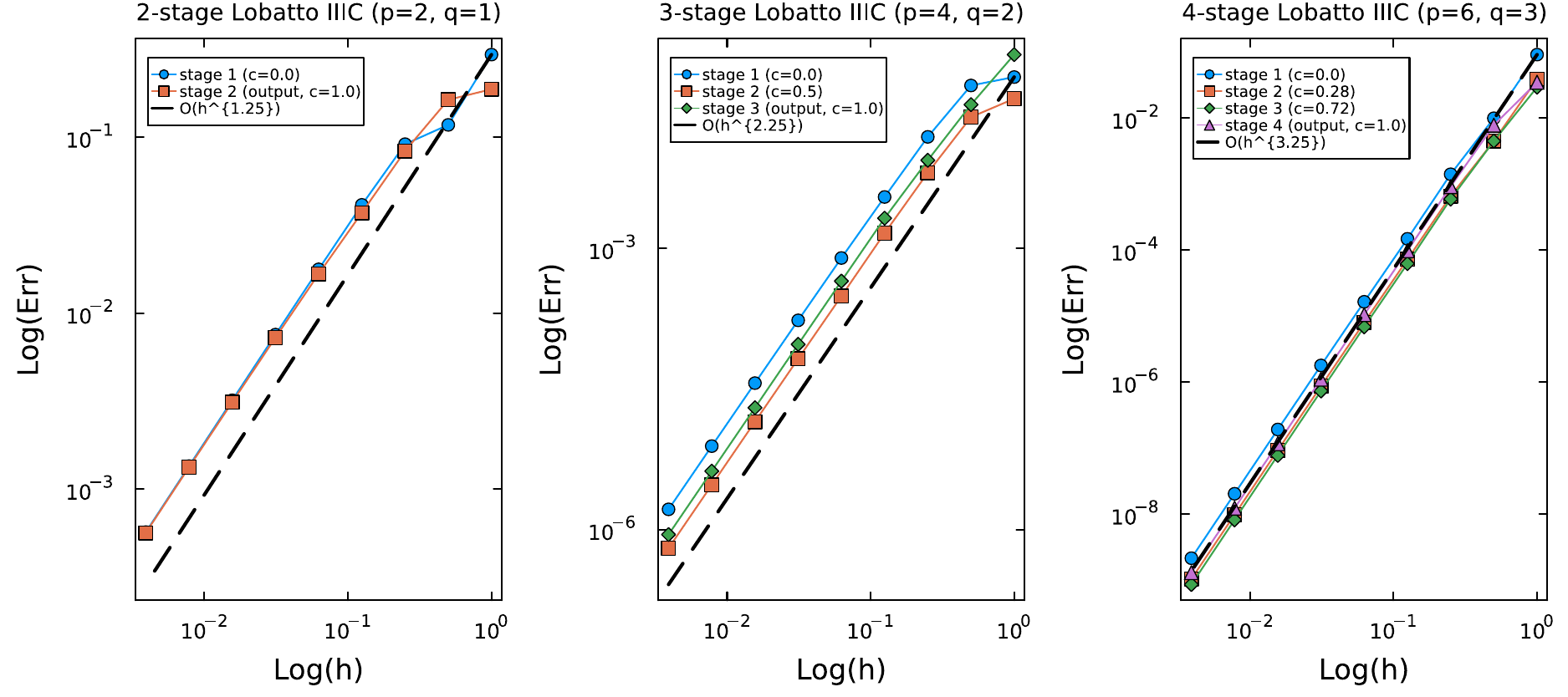}
\caption{Convergence rates of CQ based on Lobatto~IIIC for
$K(s)=s^\alpha$, applied to the approximation of fractional derivatives  of $ e^{-t}  \sin^6(t)$ on $t\in[0,4]$, $\alpha=0.45$ (top) and  $\alpha=0.75$ (bottom). The plot confirms that for $r=2,3,4$, the error behaves as $\mathcal O(h^{r-\alpha})$.}
\label{fig:expected_convergence_RKCQ_045_075}
\end{figure}


\section{Fractional Variational Integrators with RKCQ}\label{sec:FVI}
In this section we derive fractional integrators for the restricted variational principle using  Runge-Kutta convolution quadratures. We first recall the restricted variational principle as presented in~\cite{JiOb1,JiOb2}, which forms the foundation for the construction of such integrators.

\subsection{Continuous Setting of Restricted Variational Principle}\label{sec:restrectedCOV}
Let  $x,y\in AC^2([0,T],\mathbb R^d)$ and  $L:\mathsf{T}\R^d\Flder \R$ be a $C^2$-Lagrangian (using the natural identification $\mathsf{T}\R^d\cong\R^d\times\R^d$). Define the augmented fractional Lagrangian
\begin{equation}\label{GenerLag}
\begin{array}{rcl} L^{(\alpha)}:\R^d\times\R^d\times\R^d\times\R^d\times\R^d\times\R^d &  \Flder  & \R \\
 (x,\,y,\,\dot x,\,\dot y,\, D^{\alpha}_{-}x,\, D^{\alpha}_{+}y)    & \mapsto&L^{(\alpha)}(x,y,\dot x,\dot y, D^{\alpha}_{-}x,D^{\alpha}_{+}y)\\ [2ex]
  L^{(\alpha)}(x,\,y,\,\dot x,\,\dot y,\, D^{\alpha}_{-}x,\, D^{\alpha}_{+}y)=L(x,\dot x)+&&\hspace{-1cm}L(y,\dot y)-\rho\,D^{\alpha}_{-}x\,D^{\alpha}_{+}y,\quad \rho >0.
\end{array}
\end{equation}
Accordingly, we define  the relevant action $\mathcal{L}: AC^2([0,T])\times AC^2([0,T])\to \R$ by  
$\mathcal{L}(x,y)=\Lc(x,y)+\Lf(x,y)$ where
\begin{equation}\label{FracAction}
\begin{split}
\Lc(x,y)&=\int_0^T(L(x(t),\dot x(t))+L(y(t),\dot y(t))\,)\, \dd t\\
\Lf(x,y)&=-\rho\,\int_0^T D^{\alpha}_{-}x(t)\,D^{\alpha}_{+}y(t)\,\dd t.
\end{split}
\end{equation}
Moreover,  we consider a family of varied curves $x_\epsilon(t)$ and $y_\epsilon(t)$
\begin{equation}\label{VariedCurves}
x_{\epsilon}(t)=x(t)+\epsilon\, \delta x(t),\,\,\,y_{\epsilon}(t)=y(t)+\epsilon\, \delta y(t),\quad \epsilon>0
\end{equation}
subject to the restricted variations introduced in~\cite{JiOb2}, namely
\begin{equation}\label{VariedCurves2}
\delta x(t)=\delta y(t)\quad \text{for all } t\in[0,T],
\end{equation}
with  $\delta x(0)=\delta x(T)=0$, meaning that the two trajectories $x$ and $y$ are varied by the same perturbation. Throughout, we assume the endpoint $x(0),\,x(T),\,y(0),\,y(T)$ are fixed. Then  the restricted fractional Euler-Lagrange equations hold.
\begin{theorem}[FEL equations]\label{ContTheo}
The equations
\begin{subequations}\label{ContFracDamp}
\begin{align}
\frac{\dd}{\dd t}\lp\frac{\der L(x,\dot x)}{\der\dot x}\rp-\frac{\der L(x,\dot x)}{\der x}=-\rho\,D^{2\alpha}_{-}x,\label{ContFracDamp:a}\\
\frac{\dd}{\dd t}\lp\frac{\der L(y,\dot y)}{\der\dot y}\rp-\frac{\der L(y,\dot y)}{\der y}=-\rho\,D^{2\alpha}_{+}y,\label{ContFracDamp:b}
\end{align}
\end{subequations}
are  {\rm sufficient} conditions for the extremals of $\L(x,y)$~\eqref{FracAction} under the  restricted variations~\eqref{VariedCurves2}.
\end{theorem}
\textcolor{blue}{\begin{remark}
In the doubled-variable formulation~\cite{JiOb2}, the curves $x(t)$ and $y(t)$ represent a forward trajectory and its time-reversed counterpart, thereby allowing fractional dissipative systems to be incorporated into a variational framework. The unrestricted variational principle produces necessary and sufficient stationarity conditions, but the resulting coupled equations do not represent the intended physical model.
Imposing  $\delta x=\delta y$ restricts perturbations to those respecting the time-reversal structure. The variational principle then provides only sufficient conditions, yielding the physically relevant fractional damping equations. This loss of necessity is a deliberate choice to obtain correct dissipative dynamics.
\end{remark}}

\subsection{Discrete Setting Based on RKCQ}\label{subsec:Discrete_Setting}
We analogously define the \textit{advanced} RKCQ by 
\begin{equation}\label{RKConQuaPlus}
J_{+}^{\alpha}f(\mathbf t_k)\approx\mathcal{J}_{+}^{\alpha}f(\mathbf t_k):=\sum_{n=0}^{N-k}\left[W_{n}^{(\alpha)}\right]^\top\mathbf{f}_{k+n}.
\end{equation}
 where the transpose relationship  in the equality can be followed from the definition of the series,
\[
\left[K^{(\alpha)}\lp\frac{\gamma(z)}{h}\rp\right]^\top=\left[\sum_{n=0}^{\infty} W_n^{(\alpha)}z^n\right]^\top = \sum_{n=0}^{\infty} \left[W_n^{(\alpha)}\right]^\top z^n.
\]

\begin{remark}
In the formulation of the discrete restricted Hamilton principle, all variables must be treated consistently within the discrete framework. Therefore, the function 
$f$ should be represented by its discrete approximation 
$f_\dis=\{\mathbf f_k\}_{0:N}$, defined at the discrete time. Accordingly, the convolution quadratures~\eqref{RKConQua} and~\eqref{RKConQuaPlus}  should be constructed using these discrete values, rather than using the evaluation of the continuous function $f(t)$ at grid time $t_k^i$. This ensures consistency with the fully discrete variational framework and avoids mixing continuous and discrete representations.
\end{remark}
\textcolor{red}{With these ingredients, together with Assumption~\eqref{assumption}, we can establish the asymmetric integration by parts for the RKCQ which we need to derive the restricted discrete dynamics.}
\textcolor{red}{For simplicity, we state the following lemma in the scalar case 
$d=1$, so that  $f:[0,T]\to \R$.}


\begin{lemma}[Semigroup property for RKCQ]\label{lemma:Semi_group}
Consider a discrete series $\left\{ {\bf f}_k=(f_k^1,\ldots,f_k^r)^\top\right\}_{0:N}$. Then for $\alpha>0$, the RKCQ operators satisfy\footnote{\color{blue}In our applications, each stage value is vector-valued 
$f_k^i\in \R^d$, hence ${\bf f}_k\in \big(\R^d\big)^r\cong \R^{r\times d}$. All identities hold componentwise (apply the lemma to each component of $\R^d$).}
\[
\mathcal J_-^\alpha \circ \mathcal J_-^\alpha  {\bf f}_k
=
\mathcal J_-^{2\alpha} {\bf f}_k,\qquad \sigma\in \{-,+\}.
\]
\end{lemma}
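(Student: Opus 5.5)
The semigroup property follows from the generating-function definition of the weights. The key observation is that a discrete convolution of matrix sequences corresponds to a product of their generating functions. Here $K^{(\alpha)}(s)=s^{-\alpha}$ is evaluated at the matrix argument $\gamma(z)/h$, and the product rule for matrix powers then gives $K^{(\alpha)}K^{(\alpha)}=K^{(2\alpha)}$.

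\emph{Step 1: write the composition as a convolution of weights.} By \eqref{RKConQua}, $\mathcal J_-^\alpha \mathbf g_k=\sum_{m=0}^k W^{(\alpha)}_{k-m}\mathbf g_m$. We apply this to $\mathbf g_m=\mathcal J_-^\alpha\mathbf f_m=\sum_{n=0}^m W^{(\alpha)}_{m-n}\mathbf f_n$ and exchange the finite sums. This gives
\[
\mathcal J_-^\alpha\circ\mathcal J_-^\alpha\mathbf f_k=\sum_{n=0}^k\Big(\sum_{j=0}^{k-n}W^{(\alpha)}_{k-n-j}W^{(\alpha)}_{j}\Big)\mathbf f_n .
\]
It therefore suffices to show, for every $m\ge0$, the matrix Cauchy-product identity
\[
\sum_{j=0}^{m}W^{(\alpha)}_{m-j}W^{(\alpha)}_{j}=W^{(2\alpha)}_m .
\]

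\emph{Step 2: identify generating functions.} For $|z|$ small, both series in \eqref{eq:series} converge absolutely. The weights are the Taylor coefficients of the analytic matrix function $z\mapsto K^{(\alpha)}(\gamma(z)/h)$, as in the Cauchy-integral representation. The Cauchy product of two absolutely convergent matrix power series is the power series of their product. Hence
\[
\sum_{m\ge0}\Big(\sum_{j=0}^m W^{(\alpha)}_{m-j}W^{(\alpha)}_j\Big)z^m=K^{(\alpha)}\!\Big(\tfrac{\gamma(z)}{h}\Big)K^{(\alpha)}\!\Big(\tfrac{\gamma(z)}{h}\Big).
\]
Note that the order of the matrix factors matters in general. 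Here, however, both factors are the same function of the same matrix, so they commute.

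\emph{Step 3: the matrix functional calculus (main obstacle).} It remains to justify
\[
\big(\gamma(z)/h\big)^{-\alpha}\big(\gamma(z)/h\big)^{-\alpha}=\big(\gamma(z)/h\big)^{-2\alpha}.
\]
This is the delicate point, since fractional powers of matrices need a consistent definition. I would define $K^{(\alpha)}(\gamma(z)/h)$ through the holomorphic (Riesz–Dunford) functional calculus:
\[
K^{(\alpha)}(M)=\frac{1}{2\pi i}\int_\Gamma \lambda^{-\alpha}(\lambda-M)^{-1}\,\dd\lambda .
\]
Here the principal branch is used, and $\Gamma$ encloses the spectrum of $M=\gamma(z)/h$ inside the right half-plane.

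Such a contour exists for small $|z|$. At $z=0$ we have $\gamma(0)=A^{-1}$, whose spectrum lies in $\mathrm{Re}\,\lambda>0$ by $A$-stability (Assumption~\ref{assumption}; for Lobatto~IIIC this can be checked directly). By continuity the spectrum of $\gamma(z)$ stays in the right half-plane for $|z|$ small. This is also consistent with the diagonalization used in Algorithm~\ref{alg:Fractional_weights_Algorithm}.

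The functional calculus is multiplicative: $f(M)g(M)=(fg)(M)$. On the right half-plane with the principal branch we have $\lambda^{-\alpha}\lambda^{-\alpha}=\lambda^{-2\alpha}$. Therefore the identity of Step 3 holds for all small $|z|$.

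\emph{Conclusion.} Comparing Taylor coefficients in Step 2 yields the convolution identity of Step 1, which proves the statement for $\sigma=-$.

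For $\sigma=+$ the same argument applies to \eqref{RKConQuaPlus}. The composition produces $\sum_j [W^{(\alpha)}_j]^\top[W^{(\alpha)}_{m-j}]^\top=\big[\sum_j W^{(\alpha)}_{m-j}W^{(\alpha)}_j\big]^\top$. The reversed summation range $n\le N-k$ reorganizes in the same way after the substitution $n\mapsto k+n$. The identity then follows by transposing the result for $\sigma=-$.
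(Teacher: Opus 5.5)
Your proposal is correct and follows essentially the same route as the paper. Both arguments reindex the double sum to reduce the composition to the matrix Cauchy-product identity $W_m^{(2\alpha)}=\sum_{j=0}^m W_{m-j}^{(\alpha)}W_j^{(\alpha)}$, obtained by comparing Taylor coefficients of $K^{(\alpha)}(\gamma(z)/h)^2=K^{(2\alpha)}(\gamma(z)/h)$; your Riesz--Dunford justification (spectrum of $\gamma(z)/h$ in the right half-plane for small $|z|$) and your transposition argument for $\sigma=+$ make rigorous what the paper only asserts with ``valid under Assumption~\ref{assumption}'' and ``the proof for $\sigma=+$ is similar.''
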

\begin{proof}
For simplicity, we present the proof in the scalar case; the vector-valued case follows componentwise. Let  $\sigma=-$, the proof for $\sigma=+$ is similar. By definition,
\[
\mathcal J_-^\alpha {\bf f}_k
=
\sum_{n=0}^{k} W_n^{(\alpha)} {\bf f}_{k-n},
\]
where the matrix-valued weights $W_n^{(\alpha)} \in \mathbb{R}^{r\times r}$ are obtained by the generating series
\[
\sum_{n=0}^{\infty}
W_n^{(\alpha)} z^n
=
K^{(\alpha)}\!\left(\frac{\gamma(z)}{h}\right),
\qquad
K^{(\alpha)}(s)=s^{-\alpha}.
\]
Firstly, with $K^{(\alpha)}(s)=s^{-\alpha}$, we have
$
K^{(\alpha)}(A)K^{(\alpha)}(A)
=
K^{(2\alpha)}(A)$ for 
$A=\gamma(z)/h$. Thus
\[
\left(
\sum_{n=0}^{\infty}
W_n^{(\alpha)} z^n
\right)\left(
\sum_{m=0}^{\infty}
W_m^{(\alpha)} z^m
\right)
=
K^{(2\alpha)}\!\left(\frac{\gamma(z)}{h}\right):=\sum_{\ell=0}^{\infty}
W_\ell^{(2\alpha)} z^\ell.
\]
By the Cauchy product formula for matrix-valued power series, and uniqueness of power series coefficients, we obtain
\begin{equation}\label{proof:semigroup}
     W_\ell^{(2\alpha)}
=\sum_{n =0}^{\ell}W_{n}^{(\alpha)} W_{\ell-n }^{(\alpha)},\qquad \ell \geq 0.
\end{equation}
which is valid under Assumption~\ref{assumption}. Secondly, applying $\mathcal J_-^\alpha$ to $\mathcal J_-^\alpha {\bf f}_k$ yields
\[
(\mathcal J_-^\alpha\circ \mathcal J_-^\alpha {\bf f})_k
=
\sum_{n=0}^{k}
W_n^{(\alpha)}
(\mathcal J_-^\alpha {\bf f})_{k-n}.
\]
Substituting the definition of $J_-^\alpha {\bf f}_k$ and rearranging the sums gives
\begin{align*}
(\mathcal J_-^\alpha \circ\mathcal J_-^\alpha {\bf f})_k
=
\sum_{n=0}^{k}
W_n^{(\alpha)}
\left(
\sum_{m =0}^{k-n}
W_m ^{(\alpha)} 
{\bf f}_{k-n-m}
\right)  
=^{1}\sum_{n=0}^{k}
\sum_{\ell=n}^{k} W_{n}^{(\alpha)} W_{\ell-n}^{(\alpha)} {\bf f}_{k-\ell}.
\end{align*}
where in $=^1$ we used $\ell=n+m$. On the other hand, for a fixed $\ell$ (which can range from $0$ to  $k$) the constraint  $n\leq \ell\leq k$ with  $0\leq n\leq \ell$ shows that the last quantity is equivalent to
\begin{align*}
(\mathcal J_-^\alpha \circ\mathcal J_-^\alpha {\bf f})_k=
\sum_{\ell=0}^{k} \sum_{n=0}^{\ell}W_{n}^{(\alpha)} W_{\ell-n}^{(\alpha)} {\bf f}_{k-\ell}.
\end{align*}
With identity~\eqref{proof:semigroup}, the above shows the semigroup property, which completes the proof.
\end{proof}

\begin{lemma}[Asymmetric integration by parts]\label{ConvPropertiesRK1}
Consider two discrete series $\left\{ {\bf f}_k\right\}_{0:N}, \left\{ {\bf g}_k\right\}_{0:N}$, where ${\bf f}_k=(f_k^1,\ldots,f_k^r)^\top\in \R^r$ and ${\bf g}_k=(g_k^1,\ldots,g_k^r)^\top\in \R^r$. Then the following property holds:
 \begin{equation}\label{AsymmetricInt:2}
\sum_{k=0}^{N}\Big\langle {\bf g}_k, \mathcal{J}_{-}^{\alpha}{\bf f }_k\Big\rangle=\sum_{k=0}^{N}\Big\langle \mathcal{J}_{+}^{\alpha}{\bf g}_k, {\bf f }_k\Big\rangle.
\end{equation}  
\end{lemma}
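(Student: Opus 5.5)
The plan is to prove \eqref{AsymmetricInt:2} by direct expansion of both sides as finite double sums and a reindexing of the summation region. The identity is purely algebraic and uses only the definitions \eqref{RKConQua} and \eqref{RKConQuaPlus}, together with the elementary matrix identity $\langle \mathbf u, W\mathbf v\rangle=\langle W^\top\mathbf u,\mathbf v\rangle$ for $W\in\R^{r\times r}$. No analytic property of $K^{(\alpha)}$ or of Assumption~\ref{assumption} is needed beyond the fact that the weights $W_n^{(\alpha)}$ are well defined.

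First, I will write the left-hand side using \eqref{RKConQua} in the form $\mathcal J_-^\alpha\mathbf f_k=\sum_{n=0}^k W_{k-n}^{(\alpha)}\mathbf f_n$. This gives
\[
\sum_{k=0}^{N}\Big\langle \mathbf g_k,\mathcal J_-^\alpha\mathbf f_k\Big\rangle=\sum_{k=0}^{N}\sum_{n=0}^{k}\Big\langle \mathbf g_k, W_{k-n}^{(\alpha)}\mathbf f_n\Big\rangle=\sum_{k=0}^{N}\sum_{n=0}^{k}\Big\langle \big[W_{k-n}^{(\alpha)}\big]^\top\mathbf g_k, \mathbf f_n\Big\rangle .
\]
Second, I will swap the order of summation over the triangle $0\le n\le k\le N$. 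For fixed $n$, the index $k$ runs from $n$ to $N$. Substituting $k=n+m$ with $0\le m\le N-n$ turns the sum into
\[
\sum_{n=0}^{N}\Big\langle \sum_{m=0}^{N-n}\big[W_m^{(\alpha)}\big]^\top\mathbf g_{n+m},\mathbf f_n\Big\rangle,
\]
and the inner vector is exactly $\mathcal J_+^\alpha\mathbf g_n$ by \eqref{RKConQuaPlus}. Renaming $n\to k$ then yields the right-hand side.

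The only point requiring care is consistency of index conventions. The definition \eqref{RKConQua} uses $W_{k-n}$ applied to $\mathbf f_n$, while the footnote and Lemma~\ref{lemma:Semi_group} use the equivalent form $W_n\mathbf f_{k-n}$. I also have to check that the truncation $n\le N-k$ in \eqref{RKConQuaPlus} matches the triangle boundary $k\le N$ exactly, so that no boundary terms are lost or double-counted. This bookkeeping is where I expect the only real obstacle to lie. In the vector-valued case $f_k^i\in\R^d$, the weights act only on the stage index, so the identity follows componentwise, as in the footnote to Lemma~\ref{lemma:Semi_group}.
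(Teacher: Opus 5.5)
Your proposal is correct and follows essentially the same route as the paper: expand the left-hand side as a double sum over the triangle $0\le n\le k\le N$, move $W^{(\alpha)}$ across the inner product as its transpose, reindex with $m=k-n$, and recognize $\mathcal J_+^\alpha\mathbf g$ from \eqref{RKConQuaPlus}. The only cosmetic difference is that you start from the $W_{k-n}^{(\alpha)}\mathbf f_n$ form of \eqref{RKConQua}, which needs a single swap of summation order, whereas the paper starts from the equivalent $W_n^{(\alpha)}\mathbf f_{k-n}$ form and swaps twice.
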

\begin{proof}
Using the definition~\eqref{RKConQua} and employing lower index for the row vectors gives
\[
\sum_{k=0}^{N}\Big\langle {\bf g}_k, \mathcal{J}_{-}^{\alpha}{\bf f }_k\Big\rangle=
\sum_{k=0}^{N}\Big\langle{\bf g}_k,\sum_{n=0}^{k}W_n^{(\alpha)}\,{\bf f}_{k-n}\Big\rangle=\sum_{k=0}^{N}\sum_{n=0}^{k}\Big\langle{\bf g}_k,W_n^{(\alpha)}\,{\bf f}_{k-n}\Big\rangle
\]
Swap the order of summation. Since the index set is
$\{(k,n):0\le k\le N,\ 0\le n\le k\}$, for a fixed $n\in\{0,\dots,N\}$
the index $k$ runs from $k=n$ to $k=N$. Hence
\begin{align*}
\sum_{k=0}^{N}\Big\langle {\bf g}_k, \mathcal{J}_{-}^{\alpha}{\bf f }_k\Big\rangle&=\sum_{n=0}^{N}\sum_{k=n}^{N}\Big\langle{\bf g}_k,W_n^{(\alpha)}\,{\bf f}_{k-n}\Big\rangle\\
&=^1\sum_{n=0}^{N}\sum_{m=0}^{N-n}\Big\langle{\bf g}_{m+n},W_n^{(\alpha)}\,{\bf f}_{m}\Big\rangle\\
&=^2\sum_{n=0}^{N}\sum_{m=0}^{N-n}\Big\langle {W_n^{(\alpha)}}^\top{\bf g}_{m+n},\,{\bf f}_{m}\Big\rangle
\end{align*}
where setting  $m=k-n$ in $=^1$  and  the identity for the inner product $\big\langle x,Ay\big\rangle=\big\langle A^\top x,y\big\rangle$ in $=^2$. The index constraint $0\le m\le N-n$ is equivalent to $0\le n\le N-m$, hence the summation over
$\{(n,m):0\le n\le N,\ 0\le m\le N-n\}$ coincides with the summation over
$\{(m,n):0\le m\le N,\ 0\le n\le N-m\}$. Therefore, swapping the sums again yields
\begin{align*}
\sum_{n=0}^{N}\sum_{m=0}^{N-n}\Big\langle {W_n^{(\alpha)}}^\top{\bf g}_{m+n},\,{\bf f}_{m}\Big\rangle
&=\sum_{m=0}^{N}\sum_{n=0}^{N-m}\Big\langle {W_n^{(\alpha)}}^\top{\bf g}_{m+n},\,{\bf f}_{m}\Big\rangle\\
&=\sum_{m=0}^{N}\Big\langle \sum_{n=0}^{N-m}{W_n^{(\alpha)}}^\top{\bf g}_{m+n},\,{\bf f}_{m}\Big\rangle\\
&=\sum_{m=0}^{N}\big\langle \mathcal J_+^\alpha{\bf g}_m,\,{\bf f}_m\big\rangle.
\end{align*}
This completes the proof.
\end{proof}

Before deriving fractional variational integrators with RKCQ \S\ref{sec:FVI}, which are fully characterized by Butcher Tableau, we first establish the following technical lemma.  
\begin{lemma}[Stage-weighted asymmetric integration by parts]\label{ConvPropertiesRK2}
Consider two discrete series $\left\{ {\bf f}_k\right\}_{ 0:N}, \left\{ {\bf g}_k\right\}_{0:N}$ with ${\bf f}_k,{\bf g}_k\in \R^{r\times d}$. Let $B=\diag(b_1,\ldots,b_r)$ and let $\mathcal{J}_{-}^{\alpha},\mathcal{J}_{+}^{\alpha}$ be the retarded/advanced RKCQ fractional integrals. Then
\begin{equation}\label{AsymmetricInt:3}
\sum_{k=0}^{N}\Big\langle {\bf g}_k, \mathcal{J}_{-}^{\alpha}(B{\bf f })_k\Big\rangle=\sum_{k=0}^{N}\Big\langle B\mathcal{J}_{+}^{\alpha}{\bf g}_k, {\bf f }_k\Big\rangle.
\end{equation}
Here $B$ acts componentwise on the stages, i.e. $(B {\bf f }_k)^i=b_i {\bf f }_k^i$ for $i=1,\dots,r$.
\end{lemma}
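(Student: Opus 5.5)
The plan is to reduce \eqref{AsymmetricInt:3} to Lemma~\ref{ConvPropertiesRK1}, using only that $B$ is a constant diagonal (hence symmetric) matrix acting on the stage index. First, by the footnote convention, every identity can be checked componentwise in $\R^d$. So we fix a spatial component and work with ${\bf f}_k,{\bf g}_k\in\R^r$ and the Euclidean inner product on $\R^r$. The general case ${\bf f}_k,{\bf g}_k\in\R^{r\times d}$, with the Frobenius pairing, then follows by summing over the $d$ components. This is valid because the convolution weights act only on the stage index, and $B$ does too.

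Next, we define the auxiliary series $\tilde{\bf f}_k:=B{\bf f}_k$ for $k=0,\ldots,N$. This is again a discrete series of stage vectors. Applying Lemma~\ref{ConvPropertiesRK1} to the pair $\{{\bf g}_k\},\{\tilde{\bf f}_k\}$ gives
\[
\sum_{k=0}^{N}\Big\langle {\bf g}_k, \mathcal{J}_{-}^{\alpha}(B{\bf f})_k\Big\rangle=\sum_{k=0}^{N}\Big\langle \mathcal{J}_{+}^{\alpha}{\bf g}_k, B{\bf f}_k\Big\rangle .
\]
Here we must be careful that $\mathcal{J}_{-}^{\alpha}(B{\bf f})_k$ means $\sum_{n=0}^k W_{n}^{(\alpha)}B{\bf f}_{k-n}$, i.e.\ $B$ is applied to the data before convolution. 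This is exactly the hypothesis of Lemma~\ref{ConvPropertiesRK1} with $\tilde{\bf f}$ in place of ${\bf f}$.

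Finally, we move $B$ across the inner product using $\langle x,By\rangle=\langle B^\top x,y\rangle=\langle Bx,y\rangle$, since $B=\diag(b_1,\ldots,b_r)$ is symmetric. This turns $\langle \mathcal{J}_{+}^{\alpha}{\bf g}_k, B{\bf f}_k\rangle$ into $\langle B\mathcal{J}_{+}^{\alpha}{\bf g}_k,{\bf f}_k\rangle$ termwise, and summing over $k$ yields \eqref{AsymmetricInt:3}.

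There is no real analytic obstacle. The only point needing care is bookkeeping: $B$ does not commute with the weights $W_n^{(\alpha)}$, so we must not move it through $\mathcal{J}^\alpha_\pm$. The order of operations, $B$ inside on the left-hand side and outside $\mathcal{J}_+^\alpha$ on the right-hand side, is precisely what the argument produces. In the matrix-valued case we would also check that the pairing is the trace pairing, for which $\langle X,BY\rangle=\langle BX,Y\rangle$ still holds because $B$ acts by left multiplication and is symmetric.
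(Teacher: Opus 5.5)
Your proposal is correct and matches the paper's proof: apply Lemma~\ref{ConvPropertiesRK1} to the pair $(\{{\bf g}_k\},\{B{\bf f}_k\})$, then use $B^\top=B$ to move $B$ across the inner product, handling the $\R^d$ case componentwise. Your caution that $B$ must not be commuted through $\mathcal{J}_\pm^\alpha$ is sound bookkeeping that the paper leaves implicit.
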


\textcolor{blue}{
\begin{proof}
Applying Lemma~\ref{ConvPropertiesRK1} componentwise to the pair $(B{\bf f}_k,\,{\bf g}_k)$:
\[
\sum_{k=0}^N \Big\langle {\bf g}_k,\,(\mathcal J_-^\alpha (B {\bf f}))_k\Big\rangle
=
\sum_{k=0}^N \Big\langle (\mathcal J_+^\alpha {\bf g})_k,\,(B {\bf f})_k\Big\rangle .
\]
Since $B^\top=B$ (diagonal), we have $\langle u,\,Bv\rangle=\langle Bu,\,v\rangle$. Hence
\[
\sum_{k=0}^N \Big\langle (\mathcal J_+^\alpha {\bf g})_k,\,(B {\bf f})_k \Big\rangle
=
\sum_{k=0}^N  \Big\langle (B \mathcal J_+^\alpha {\bf f})_k,\, {\bf f}_k \Big\rangle,
\]
which is~\eqref{AsymmetricInt:3}.
\end{proof}
}
 In what follows, we set $r=s+1$. Moreover, we focus on Lobatto~IIIC methods for which $c_1=0$ and $c_{r}=1$. Table~\ref{tab:Lobatto} lists the Lobatto~IIIC Butcher tableaux for $r=2,3,4$. In addition, we denote by $W_k^{(\alpha)}$ the RKCQ weights associated with the fractional derivatives  (we use this notation to avoid repeatedly writing weights with a negative order).
\begin{table}[H]
\centering
$\setlength{\tabcolsep}{20pt}
\renewcommand{\arraystretch}{1.5}\begin{array}{c|cc}
0  &  \frac{1}{2} & - \frac{1}{2} \\
   1  & \frac{1}{2} &\phantom{-}\frac{1}{2} \\ \hline 
   & \frac{1}{2}& \frac{1}{2}
\end{array}\hspace{2cm}\begin{array}{c|ccc}
0  &  \frac{1}{6} & - \frac{1}{3} & \phantom{-}\frac{1}{6} \\
   \frac{1}{2}  &\frac{1}{6} &  \phantom{-}\frac{5}{12} & -\frac{1}{12} \\
 1   &\frac{1}{6} & \phantom{-} \frac{2}{3} & \phantom{-}\frac{1}{6}  \\ \hline 
 &\frac{1}{6} &  \phantom{-}\frac{2}{3} & \phantom{-}\frac{1}{6} 
\end{array}\hspace{2cm}\begin{array}{c|cccc}
0  &  \frac{1}{12} & - \frac{\sqrt{5}}{12} & \frac{\sqrt{5}}{12}&-\frac{1}{12} \\
\frac{5-\sqrt{5}}{10} &  \frac{1}{12} &  \frac{1}{4} & \frac{10-7\sqrt{5}}{60}&\phantom{-}\frac{\sqrt{5}}{60} \\
\frac{5+\sqrt{5}}{10}  &  \frac{1}{12} & \frac{10+7\sqrt{5}}{60} & \frac{1}{4}&-\frac{\sqrt{5}}{60} \\
1  &  \frac{1}{12} &  \frac{5}{12} & \frac{5}{12}&\phantom{-}\frac{1}{12} \\ \hline
 &\frac{1}{12} &  \frac{5}{12} & \frac{5}{12} & \phantom{-}\frac{1}{12}
\end{array}$
\caption{Lobatto~IIIC methods with $r = 2$, $r = 3$  and $r = 4.$}
\label{tab:Lobatto}
\end{table}
\textcolor{blue}{
We consider two discrete series $x_\dis^{}:=\{\mathbf{x}_k\}_{0:N}$ and  $y_\dis^{}:=\{\mathbf{y}_k\}_{0:N}$
where each $\mathbf{x}_k$ and $\mathbf{y}_k$ is a vector of stage values. Concretely,
\begin{itemize}
    \item $\mathbf{x}_k:=(x_k^1,\ldots,x_k^{s+1})^\top$ and $\mathbf{y}_k:=(y_k^1,\ldots,y_k^{s+1})^\top$;
    \item each stage component $x_k^i$ (and $y_k^i$) represents the approximation of the trajectory at the Runge–Kutta node $t_k+c_ih$ inside the time interval 
$[t_k,t_{k+1}]$.
\end{itemize}}
Because Lobatto~IIIC satisfies 
$c_1=0$ and $c_r=1$, the main stages correspond to the endpoints $t_k$ and $t_{k+1}$. Therefore, continuity at these nodes is enforced by imposing $x_{k-1}^{s+1}=x_{k}^{1}$ and similarly $y_{k-1}^{s+1}=y_{k}^{1}$. Regarding the approximation of~\eqref{FracAction}, we consider the following discrete action:  
\begin{equation}\label{DiscFracActionHORK}
\begin{split}
\mathcal{L}_\dis(x_\dis^{},y_\dis^{})=\Lc_\dis&(x_\dis^{},y_\dis^{})+\Lf_\dis(x_\dis^{},y_\dis^{}), \\
\Lc_\dis(x_\dis^{},y_\dis^{})=\sum_{k=0}^{N-1}\big[L_\dis({\bf x}_k)+L_\dis({\bf y}_k)\big],&\quad \Lf_\dis(x_\dis^{},y_\dis^{})=-\rho\,h\,\,\sum_{k=0}^{N}\left\langle\widetilde{\mathcal{D}}^{\alpha}_{+}{\bf y}_k,\mathcal{D}^{\alpha}_{-}{\bf x}_k\right\rangle,\\
L_\dis({\bf x}_k)=h\sum_{i=1}^rb_iL(x_\dis^{}(c_i\,h;k),\,\dot x_\dis^{}(c_i\,h;k)),&\,\,\,\,\,\, L_\dis({\bf y}_k)=h\sum_{i=1}^rb_iL(y_\dis^{}(c_i\,h;k),\dot y_\dis^{}(c_i\,h;k)),
\end{split}
\end{equation}
where the conservative contribution is constructed using the standard Galerkin with Lobatto quadrature (see \S\ref{HO-Action}). For the fractional contribution, we employ the RK convolution quadratures~\eqref{RKConQua},~\eqref{RKConQuaPlus} with $\widetilde{\mathcal{D}}^{\alpha}_{+}{\bf y}_k:=\mathcal{D}^{\alpha}_{+}B{\bf  y}_k$. 

Finally, to formulate the restricted discrete variational principle, we introduce discrete varied curves that mirror the continuous restriction ``both trajectories have the same variation''. Specifically, for $\epsilon>0$, we perturb both discrete curves by the same discrete variation at every step and stage, namely $\delta \mathbf{y}_k=\delta \mathbf{x}_k:=(\delta x_k^1,\ldots,\delta x_k^{s+1})^\top$. In other words, the full varied discrete series is given by
\begin{equation}\label{RestrictedVariedInnerRK}
x_\dis^{\epsilon}=\left\{\mathbf{x}_k+\epsilon \delta \mathbf{x}_k\right\}_{0:N},\qquad y_\dis^{\epsilon}=\left\{\mathbf{y}_k+\epsilon \delta \mathbf{x}_k\right\}_{0:N}.
\end{equation}
Naturally, we  set 
$x_N^{i}=0$ for $i=2,\ldots,s+1$ and  $\delta\mathbf  x_N=0$ (equivalent for $y$), besides $\delta x_{0}^{1}=\delta y_{0}^{1}=\delta x_{N-1}^{s+1}(=\delta x_{N}^{1})=0=\delta y_{N-1}^{s+1}(=\delta y_{N}^{1})$, since $x_{0}^{1},\,y_{0}^{1},\,x_{N-1}^{s+1}=x_{N}^{1},\,y_{N-1}^{s+1}=y_{N}^{1}$  are fixed.
The following lemma is needed for the proof of the main theorem. In its proof and in the rest of the paper, we adopt the notation $[A]^i$ for the $i$-th row of a matrix $A$.
\begin{lemma}\label{VariationCommRK}
According to the definitions~\eqref{RKConQua},~\eqref{RKConQuaPlus} and considering varied curves~\eqref{RestrictedVariedInnerRK}, we have\footnote{\textcolor{blue}{The symbol $\delta$ denotes the variational  operator with respect to the  parameter 
$\epsilon$, evaluated at $\epsilon=0$, associated with the $\epsilon$-dependent curve. More precisely, for any  $\epsilon$-dependent quantity $F(x^\epsilon)$, we define $\delta F = \frac{d}{d \epsilon}\big|_{\epsilon=0}F(x^\epsilon)$.
}}
\[
\delta \mathcal{D}^{\alpha}_{-}{\bf x}_k = \mathcal{D}^{\alpha}_{-}{\bf \delta x}_k\qquad\text{and}\qquad \delta \mathcal{D}^{\alpha}_{+}{\bf y}_k = \mathcal{D}^{\alpha}_{+}{\bf \delta y}_k.
\]
\end{lemma}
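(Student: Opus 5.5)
The plan is to exploit linearity. By \eqref{RKConQua2} and the advanced analogue \eqref{RKConQuaPlus}, both operators are finite linear combinations of the stage vectors of the discrete series, with constant matrix coefficients:
\[
\mathcal{D}^{\alpha}_{-}{\bf x}_k=\sum_{n=0}^{k}W_{k-n}^{(\alpha)}{\bf x}_n ,\qquad \mathcal{D}^{\alpha}_{+}{\bf y}_k=\sum_{n=0}^{N-k}\big[W_{n}^{(\alpha)}\big]^\top{\bf y}_{k+n}.
\]
Here $W_n^{(\alpha)}$ denote the derivative weights. These depend only on $h$, $\alpha$ and the Butcher tableau through the expansion \eqref{eq:series}, not on the trajectory. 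So I will write each operator as a linear map $\mathcal{D}^{\alpha}_{\pm}:(\R^{r\times d})^{N+1}\to\R^{r\times d}$ (for each fixed $k$), acting componentwise in $\R^d$ as in the footnote to Lemma~\ref{lemma:Semi_group}.

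First I substitute the varied series \eqref{RestrictedVariedInnerRK} into the retarded operator. Linearity gives
\[
\mathcal{D}^{\alpha}_{-}({\bf x}+\epsilon\delta{\bf x})_k=\mathcal{D}^{\alpha}_{-}{\bf x}_k+\epsilon\sum_{n=0}^{k}W_{k-n}^{(\alpha)}\delta{\bf x}_n=\mathcal{D}^{\alpha}_{-}{\bf x}_k+\epsilon\,\mathcal{D}^{\alpha}_{-}\delta{\bf x}_k .
\]
This expression is affine in $\epsilon$, so differentiating at $\epsilon=0$, as in the footnote defining $\delta$, gives the first identity. The same computation with the transposed weights and the shifted index range $k+n$, $n=0,\ldots,N-k$, gives $\delta\mathcal{D}^{\alpha}_{+}{\bf y}_k=\mathcal{D}^{\alpha}_{+}\delta{\bf y}_k$, where $\delta{\bf y}_k=\delta{\bf x}_k$ under the restriction. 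The same argument also covers $\widetilde{\mathcal{D}}^{\alpha}_{+}=\mathcal{D}^{\alpha}_{+}B$, because the constant diagonal matrix $B$ commutes with $\frac{d}{d\epsilon}$. This is the form actually used in $\Lf_\dis$.

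The only point needing care is consistency with the boundary and continuity conventions, namely $x_{k-1}^{s+1}=x_k^1$, the fixed endpoints, and $\delta\mathbf{x}_N=0$. Because the stage values are the independent variables and the operators are applied to the full stage vectors, these constraints only restrict which $\delta{\bf x}_k$ are admissible. They do not affect linearity, so the identities hold for every admissible variation. I do not expect a real obstacle. The main thing is to stress that the weights are independent of the curve, which holds because they are built from $K^{(-\alpha)}(\gamma(z)/h)$ alone.
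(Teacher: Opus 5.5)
Your proposal is correct and matches the paper's proof: both rest on the fact that the RKCQ weights depend only on $h$, $\alpha$, the Butcher tableau and the kernel, not on the trajectory, so the convolution sum is affine in $\epsilon$ and differentiating at $\epsilon=0$ gives $\sum_{n=0}^{k}W_{k-n}^{(\alpha)}\delta\mathbf{x}_n$, with the advanced case handled the same way. Your extra remarks on $\widetilde{\mathcal{D}}^{\alpha}_{+}=\mathcal{D}^{\alpha}_{+}B$ and on the boundary and continuity constraints are harmless additions that the paper leaves implicit.
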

\textcolor{blue}{\begin{proof}
We note that, in the RKCQ framework, the weights  $W_n^{(\alpha)}$ are determined entirely by the step size 
$h$, the underlying RK method $(A, \mathbf b, \mathbf c)$ through the generating function $\gamma(z)$ and the kernel $K^{(\alpha)}(s)$, in particular, they are independent of the discrete trajectory $\{\mathbf x_k\}_{0:N}$. So, if we pick $\delta \mathcal{D}^{\alpha}_{-}{\bf x}_k$, it follows that
\[
\delta \mathcal{D}^{\alpha}_{-}{\bf x}_k=\frac{\dd}{\dd\epsilon}\bigg|_{\epsilon=0}\mathcal{D}^{\alpha}_{-}{\bf x}_k^{\epsilon}
=\frac{\dd}{\dd\epsilon}\bigg|_{\epsilon=0} \sum_{n=0}^{k}W_{k-n}^{(\alpha)}(\mathbf x_{n}+\epsilon\,\delta \mathbf x_{n})=\sum_{n=0}^{k}W_{k-n}^{(\alpha)}\delta \mathbf x_{n}=\mathcal{D}^{\alpha}_{-}\,({\bf \delta x}_k).
\]
The proof for $\delta \mathcal{D}^{\alpha}_{+}{\bf y}_k$ is similar. 
\end{proof}}
\begin{theorem}[FDEL equations]\label{thm:maintheorem}
Let us consider two discrete series $\left\{{\bf x}_k\right\}_{0:N}$ and $\left\{{\bf y}_k\right\}_{0:N}$ such that $x_N^{2:s+1}=y_N^{2:s+1}=0$. Then, the equations
\begin{subequations}\label{FinalRK}
\begin{align}
&D_{s+1}L_\dis(x_{k-1}^{1},\ldots,x_{k-1}^{s+1})+ D_1L_\dis(x_k^1,\ldots,x_k^{s+1})\nonumber\\
&\hspace{3cm}-\rho h\left(b_1\left[\mathcal{D}^{2\alpha}_{-}{\bf x}_k\right]^1 +b_{s+1}\left[ \mathcal{D}^{2\alpha}_{-}{\bf x}_{k-1}\right]^{s+1}\right)=0,\quad  k=1,\ldots, N-1,\label{FinalRK1}\\
&D_iL_\dis(x_k^1,\ldots,x_k^{s+1})-\rho h b_i \left[\mathcal{D}^{2\alpha}_{-}{\bf x}_k\right]^i=0,\,\hspace{1cm}\,\,\quad k=0,\ldots, N-1,\,\quad i=2,\ldots,s,\label{FinalRKi}\\
&D_{s+1}L_\dis(y_{k-1}^{1},\ldots,y_{k-1}^{s+1})+ D_1L_\dis(y_k^0,\ldots,y_k^s)\nonumber\\
&\hspace{3cm}-\rho h\left(\left[\mathcal{D}^{2\alpha}_{+}B{\bf  y}_k\right]^1 +\left[\mathcal{D}^{2\alpha}_{+}B{\bf  y}_{k-1}\right]^{s+1}\right)=0,\quad  k=1,\ldots, N-1,\label{FinalRKy1}\\
&D_iL_\dis(y_k^{1},\ldots,y_k^{s+1})-\rho h \left[\mathcal{D}^{2\alpha}_{+}B{\bf y}_k\right]^i=0,\,\hspace{1cm}\,\,\quad k=0,\ldots, N-1,\,\quad i=2,\ldots,s,\label{FinalRKyi}
\end{align}
\end{subequations}
 are sufficient conditions for the extremals of~\eqref{DiscFracActionHORK} under restricted calculus of variations~\eqref{RestrictedVariedInnerRK}.
\end{theorem}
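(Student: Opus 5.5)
We compute the first variation $\delta\mathcal L_\dis=\frac{\dd}{\dd\epsilon}\big|_{\epsilon=0}\mathcal L_\dis(x_\dis^\epsilon,y_\dis^\epsilon)$ along the restricted varied curves~\eqref{RestrictedVariedInnerRK}. We then rewrite it as a sum over $k$ and stages $i$ of terms of the form $\langle E^x_{k,i}+E^y_{k,i},\delta x_k^i\rangle$. If the expressions in~\eqref{FinalRK} vanish, each such coefficient vanishes, so $\delta\mathcal L_\dis=0$; this gives sufficiency, not necessity, because only the sum $E^x+E^y$ multiplies the common variation.

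\emph{Conservative part.} This is the standard Galerkin computation of \S\ref{HO-Action}. We have $\delta\Lc_\dis=\sum_{k=0}^{N-1}\sum_{i=1}^{s+1}\big(D_iL_\dis(\mathbf x_k)+D_iL_\dis(\mathbf y_k)\big)\delta x_k^i$. Using the continuity $x_{k-1}^{s+1}=x_k^1$ (hence $\delta x_{k-1}^{s+1}=\delta x_k^1$) and the fixed endpoints $\delta x_0^1=\delta x_N^1=0$, we regroup the end-stage contributions. The node terms become $\big(D_{s+1}L_\dis(\mathbf x_{k-1})+D_1L_\dis(\mathbf x_k)\big)\delta x_k^1$ for $k=1,\dots,N-1$, and similarly for $y$. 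The inner stages $i=2,\dots,s$ stay as they are.

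\emph{Fractional part.} By Lemma~\ref{VariationCommRK} and the product rule,
\[
\delta\Lf_\dis=-\rho h\sum_{k=0}^N\Big(\big\langle \mathcal D_+^\alpha B\delta\mathbf x_k,\mathcal D_-^\alpha\mathbf x_k\big\rangle+\big\langle \mathcal D_+^\alpha B\mathbf y_k,\mathcal D_-^\alpha\delta\mathbf x_k\big\rangle\Big).
\]
For the first sum we use the stage-weighted integration by parts (Lemma~\ref{ConvPropertiesRK2}, read from right to left, with $\mathcal J$ replaced by the derivative weights $W^{(-\alpha)}$, to which the same summation argument applies verbatim). This moves $\mathcal D_+^\alpha B$ onto $\mathcal D_-^\alpha\mathbf x_k$ as $B\mathcal D_-^\alpha\mathcal D_-^\alpha\mathbf x_k$. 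The semigroup property (Lemma~\ref{lemma:Semi_group}, again valid for $W^{(-\alpha)}$ since $s^{\alpha}s^{\alpha}=s^{2\alpha}$ and the Cauchy product argument is unchanged) turns this into $\sum_k\langle B\mathcal D_-^{2\alpha}\mathbf x_k,\delta\mathbf x_k\rangle$, whose $i$-th stage coefficient is $b_i[\mathcal D_-^{2\alpha}\mathbf x_k]^i$. For the second sum we use Lemma~\ref{ConvPropertiesRK1} to move $\mathcal D_-^\alpha$ onto the first factor as $\mathcal D_+^\alpha$, then apply the semigroup property in the advanced form. This gives $\sum_k\langle\mathcal D_+^{2\alpha}B\mathbf y_k,\delta\mathbf x_k\rangle$ with stage coefficients $[\mathcal D_+^{2\alpha}B\mathbf y_k]^i$.

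\emph{Regrouping and conclusion.} We split each fractional sum by stages. For the inner stages $i=2,\dots,s$, the coefficients combine with $D_iL_\dis$ to give~\eqref{FinalRKi} and~\eqref{FinalRKyi}. For $i=1$ at step $k$ and $i=s+1$ at step $k-1$ we again use $\delta x_{k-1}^{s+1}=\delta x_k^1$, which produces the bracketed pairs in~\eqref{FinalRK1} and~\eqref{FinalRKy1}. The boundary index $k=N$ drops out because $\delta\mathbf x_N=0$ and $x_N^{2:s+1}=y_N^{2:s+1}=0$. The terms at $k=0$, $i=1$ and at $k=N-1$, $i=s+1$ vanish because the endpoint variations are zero.

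The total coefficient of each free $\delta x_k^i$ is then the sum of the $x$-equation and the $y$-equation in~\eqref{FinalRK}. Hence these equations imply $\delta\mathcal L_\dis=0$ for all restricted variations. The main obstacle is the bookkeeping in the fractional part. One must check that the placement of $B$ and the transposes in $\mathcal D_+$ produce exactly $b_i[\mathcal D_-^{2\alpha}\mathbf x]^i$ and $[\mathcal D_+^{2\alpha}B\mathbf y]^i$. One must also justify the semigroup identity for the advanced operator, which needs the transposed Cauchy product $\sum_n (W_n)^\top(W_{\ell-n})^\top=(W_\ell^{(2\cdot)})^\top$; this holds because the $W_n$ commute, being coefficients of functions of the same matrix $\gamma(z)$. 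The truncation of the advanced sums at $N-k$ also has to be handled consistently.
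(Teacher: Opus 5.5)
Your proposal is correct and follows essentially the same route as the paper. Both proofs vary the Galerkin part stagewise, and both use Lemma~\ref{VariationCommRK} with the product rule on the fractional part. The discrete integration-by-parts lemmas and the semigroup property then give the stage coefficients $b_i[\mathcal D_-^{2\alpha}\mathbf x_k]^i$ and $[\mathcal D_+^{2\alpha}B\mathbf y_k]^i$. Finally, the end stages are regrouped via $\delta x_{k-1}^{s+1}=\delta x_k^1$ and the boundary variations are set to zero.

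One side remark needs correcting: the weights $W_n$ need not commute. For the 2-stage Lobatto~IIIC and $K(s)=s$ one has $W_0=A^{-1}/h$ and $W_1=-A^{-1}\mathds 1\mathbf b^\top A^{-1}/h$, which do not commute. The advanced semigroup identity does not need commutation, since $\sum_{n=0}^{\ell}W_n^\top W_{\ell-n}^\top=\big(\sum_{n=0}^{\ell}W_{\ell-n}W_n\big)^\top=\big(W_\ell^{(2\alpha)}\big)^\top$ follows from reindexing $n\mapsto \ell-n$ in the Cauchy product.
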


\begin{proof}
\textcolor{blue}{Let $\left\{{\bf x}_k\right\}_{0:N}$ and $\left\{{\bf y}_k\right\}_{0:N}$ be  two discrete series satisfying $x_N^{2:s+1}=y_N^{2:s+1}=0$. Under the restricted variations $\delta \mathbf x_k=  \delta\mathbf  y_k$ for all $k$,  the 
variation of  the conservative part is
\begin{equation}\label{eq:variation_proof}
\delta\Lc_\dis(x_\dis^{},y_\dis^{})=\sum_{k=0}^{N-1}\sum_{i=1}^{s+1}\lp D_iL_\dis(\mathbf x_k)+D_iL_\dis(\mathbf y_k)\rp \cdot \delta x_k^{i},
\end{equation}
where $D_iL_\dis(\mathbf x_k)\cdot\delta x_k^{i}:=\frac{\partial L_\dis(\mathbf x_k)}{\partial x_k^{i}}\cdot\delta x_k^{i}$. For the fractional part, we employ the Leibniz rule of the derivative and Lemma~\ref{VariationCommRK}, we obtain
\[
\delta\Lf_\dis(x_\dis^{},y_\dis^{})=-\rho\,h\sum_{k=0}^{N}\left\langle\widetilde{\mathcal{D}}^{\alpha}_{+}{\bf \delta\, y}_k,\mathcal{D}^{\alpha}_{-}{\bf x}_k\right\rangle -\rho\,h\sum_{k=0}^{N}\left\langle\widetilde{\mathcal{D}}^{\alpha}_{+}{\bf y}_k,\mathcal{D}^{\alpha}_{-}{\bf \delta\, x}_k\right\rangle.
\]
Applying Lemma~\ref{ConvPropertiesRK2} to each term  and taking into account that $\delta {\bf x}_k = \delta {\bf y}_k$ yields
\begin{equation}\label{sums}
    \delta\Lf(x_\dis^{},y_\dis^{})=-\rho\,h\sum_{k=0}^{N}\left\langle{\bf \delta\, x}_k,B\mathcal{D}^{2\alpha}_{-}{\bf x}_k\right\rangle -\rho\,h\sum_{k=0}^{N}\left\langle\mathcal{D}^{2\alpha}_{+}(B{\bf  y})_k,{\bf \delta\, x}_k\right\rangle
\end{equation}
where Lemma~\ref{lemma:Semi_group} (semigroup property) is used to identify $\mathcal D_-^{2\alpha}=\mathcal D_-^\alpha\circ \mathcal D_-^\alpha$ and $\mathcal D_+^{2\alpha}=\mathcal D_+^\alpha\circ \mathcal D_+^\alpha$. From~\eqref{RestrictedVariedInnerRK}, we have $\delta {\bf x}_N=0$, and the sums~\eqref{sums} run from $k=0$ to $N-1$.
Using the inner product, the terms in~\eqref{sums} can be written explicitly as
\begin{align*}
    \left\langle{\bf \delta\, x}_k,B\mathcal{D}^{2\alpha}_{-}{\bf x}_k\right\rangle&=\sum_{i=1}^{s+1}b_i\left\langle{ \delta x}_k^i,(\mathcal{D}^{2\alpha}_{-}{\bf x}_k)^i\right\rangle\\\left\langle\mathcal{D}^{2\alpha}_{+}(B{\bf  y})_k,{\bf \delta\, x}_k\right\rangle&=\sum_{i=1}^{s+1}\left\langle(\mathcal{D}^{2\alpha}_{+}(B{\bf  y})_k)^i,{  \delta  x}_k^i\right\rangle.
\end{align*}
Hence
\[\delta\Lf(x_\dis^{},y_\dis^{})=-\rho\,h\sum_{k=0}^{N-1}\sum_{i=1}^{s+1}\left(b_i(\mathcal{D}^{2\alpha}_{-}{\bf x}_k)^i +\mathcal{D}^{2\alpha}_{+}(B{\bf  y})_k)^i\right)\cdot \delta  x_k^i.\]
Combine $\delta\Lc_\dis(x_\dis^{},y_\dis^{})+\delta\Lf(x_\dis^{},y_\dis^{})$, the stationarity implies
\[\delta\mathcal{L}_\dis(x_\dis^{},y_\dis^{})=\sum_{k=0}^{N-1}\sum_{i=1}^{s+1}\lp D_iL_\dis(\mathbf x_k)+D_iL_\dis(\mathbf  y_k)-h\rho\left(b_i(\mathcal{D}^{2\alpha}_{-}{\bf x}_k)^i +\mathcal{D}^{2\alpha}_{+}(B{\bf  y})_k)^i\right)\rp\cdot \delta  x_k^i=0.\]
At the internal stages ($i=2,\ldots,s$),  the variations $\delta x_k^i$  are arbitrary for $k=0,\ldots,N-1$, and thus the stationarity yields the interior-stage equations~\eqref{FinalRKi} and~\eqref{FinalRKyi}.
At the main nodes ($i=1$ and $i=s+1$) with the continuity condition $ x_{k-1}^{s+1}= x_{k}^{1}$ gives $\delta x_{k-1}^{s+1}=\delta x_{k}^{1}$ (and the same for $y_k^i$) yield for all $k=1,\ldots,N-1$
\begin{multline*}
   \Big( D_{s+1}L_\dis(\mathbf  x_{k-1})+D_{s+1}L_\dis(\mathbf  y_{k-1})+ D_{1}L_\dis(\mathbf  x_{k})+D_{s}L_\dis(\mathbf  y_{k})\\-h\rho\left(b_i(\mathcal{D}^{2\alpha}_{-}{\bf x}_k)^1 +\mathcal{D}^{2\alpha}_{+}(B{\bf  y})_k)^1+b_{s+1}(\mathcal{D}^{2\alpha}_{-}{\bf x}_{k-1})^{s+1} +\mathcal{D}^{2\alpha}_{+}(B{\bf  y})_{k-1})^{s+1}\right)\Big)\cdot \delta  x_k^i=0,
\end{multline*}
where the only boundary conditions are $\delta x_0^1=0$ and $\delta x_{N-1}^{s+1}:=\delta x_{N}^{1}=0$. Now, given arbitrary  variation $\delta x_k^{i}$ for $k=1,\ldots,N-1$, we obtain the nodal equations~\eqref{FinalRK1} and~\eqref{FinalRKy1},  and the theorem follows.}
\end{proof}

The FDEL equations~\eqref{FinalRK1} and~\eqref{FinalRKi}, corresponding to the $x$-part, define a discrete iteration scheme for the fractional dynamics~\eqref{ContFracDamp:a}. The general FDEL scheme with RKCQ is summarized in Algorithm~\ref{alg:FractionalAlgorithm}. As the system is nonlinear, each step is typically solved using a numerical root-finding method, such as a Newton method, to compute the $s$ unknowns $(x_k^1,\ldots,x_k^{s+1}=x_{k+1}^1)$. 

\begin{algorithm}{}
\begin{algorithmic}[1]
\State {\bf Initial data}: $N,\, h,\,\alpha,\,W_n^{(\alpha)},\, x_0^1,\, \m_{x_0}.$
 \State {\bf Solve for} $x_0^2, \ldots,x_0^{s+1}$ {\bf from} 
\[
\begin{split}
\m_{x_0}=&-D_1L_\dis(x_0^1, \ldots,x_0^{s+1})+\rho hb_1\left[\mathcal{D}^{2\alpha}_{-}{\bf x}_k\right]^1,\\
0=&\phantom{-} D_iL_\dis(x_0^1, \ldots.,x_0^{s+1})-\rho hb_i\left[\mathcal{D}^{2\alpha}_{-}{\bf x}_k\right]^i,\quad \forall\,i=2, \ldots,s.
\end{split}
\]
\State {\bf Initial points:} $x_0^1, \ldots,x_0^{s+1}=x_1^1$
    \For {$k= 1: N-1$} 
    
\State  {\bf Solve for} $x_{k}^2, \ldots,x_k^{s+1}=x_{k+1}^1$ {\bf from} 
\[
\begin{split}
0=&\,\,D_{s+1}L_\dis(x_{k-1}^1, \ldots,x_{k-1}^{s+1})+ D_1L_\dis(x_k^1, \ldots,x_k^{s+1})-\rho h \left(b_1\left[\mathcal{D}^{2\alpha}_{-}{\bf x}_k\right]^1+b_{s+1}\left[\mathcal{D}^{2\alpha}_{-}{\bf x}_{k-1}\right]^{s+1}\right),\\
0=&\,\,D_iL_\dis(x_k^1, \ldots,x_k^{s+1})-\rho h b_i\left[\mathcal{D}^{2\alpha}_{-}{\bf x}_k\right]^i,\quad \forall\,i=2, \ldots,s.
\end{split}
\]
    \EndFor
    \State  {\bf Output:} $(x_1^{i}, \ldots,x_{N-1}^{i}),\quad i=1, \ldots,s+1.$
\end{algorithmic}
\caption{Fractional Variational Integrator (FVI)  with RKCQ}\label{alg:FractionalAlgorithm}
\end{algorithm}

The discrete trajectory $\{\mathbf{x}_k\}_{0:N}$ is generated by solving the FDEL equations~\eqref{FinalRK1}-\eqref{FinalRKi} through Algorithm~\ref{alg:FractionalAlgorithm}. Once this trajectory has been computed, the associated discrete conjugate momenta at the main nodes can be defined via the discrete Legendre transform (see~\cite{MaWe,SinaVerm,SinaSaake} for the general theory in the variational integrator setting). Analogously to the classical  case, we define the discrete momenta as
\begin{align}\label{eq:xpart_frac_hamiltonian_sys2}
\begin{split}
        \m_k^-&=-D_1L_\dis(x_k,x_{k+1})+\frac{\rho h}{2} \left[\mathcal{D}^{\beta}_{-}{\bf x}_k\right]^1\\
   \m_{k+1}^+&=\phantom{-}D_2L_\dis(x_k,x_{k+1})-\frac{\rho h}{2} \left[\mathcal{D}^{\beta}_{-}{\bf x}_k\right]^2.
\end{split}
\end{align}
In terms of these quantities, equation~\eqref{FinalRK1} takes the form of the matching condition
$$\m_k^+=\m_k^-.$$
Letting 
$\m_k$ denote this common value, we obtain a discrete phase-space solution 
$\{(x_k,\m_k)\}_{k=0,\ldots,N}$ defined on the main nodes.
\section{Convergence Analysis and Numerical Results}\label{sec:numericalresult}
In this section, we carry out both a theoretical and a numerical investigation of the convergence properties of the fractional variational integrators based on RKCQ. On the theoretical side, we provide a rigorous analysis of the convergence order for the scheme generated by the 2-stage Lobatto~IIIC method. For the higher-stage variants, where a complete theoretical treatment becomes substantially more intricate, we support the analysis with systematic numerical experiments that empirically verify the anticipated convergence rates. In addition, we examine whether the proposed schemes can accurately capture the energy decay behaviour inherent in fractionally damped systems. For these investigations, we consider two models. The first is a two-dimensional system of coupled harmonic oscillators subject to linear damping. The second is a class of Bagley–Torvik equations, which provides a scalar but structurally rich test case involving fractional derivatives of different orders.

\subsection{Order of Convergence} In this section, we let 
$\beta=2\alpha$ for simplicity of notation and write $D_-^{\beta}x(t)=D_-^{2\alpha}x(t)$. For the mechanical Lagrangian $L(q,\dot q)=\frac{1}{2}\dot q^\top \dot q -U(q)$,  the restricted  Hamiltonian system associated to the FEL equation~\eqref{ContFracDamp:a} is defined as
\begin{align}\label{eq:xpart_frac_hamiltonian_sys}
\begin{split}
\begin{dcases}
\dot x &= \phantom{-}\m\\
\dot 	\m &= -\nabla U(x)-\rho  D_{-}^{\beta}x. 
\end{dcases}
\end{split}
\end{align}
The discrete Lagrangian associated to the $2$-stage Lobatto~IIIC ($c_1=0,\,c_2=1,\, p=2,\, q=1$) reads
\begin{align*}
    L_\dis(x_k,x_{k+1})&=\frac{h}{2}L\left(x_k,\frac{x_{k+1}-x_k}{h}\right)+\frac{h}{2}L\left(x_{k+1},\frac{x_{k+1}-x_k}{h}\right)\\
&=\frac{1}{2h}\left(x_{k+1}-x_k\right)^\top \left(x_{k+1}-x_k\right)-\frac{h}{2} U(x_k)-\frac{h}{2} U(x_{k+1}).
\end{align*}
Now, we want to write the position-momentum form for~\eqref{ContFracDamp:a}. Computing explicitly  the partial derivatives of $L_\dis(x_k,x_{k+1})$ and plugging them into~\eqref{eq:xpart_frac_hamiltonian_sys2}   yields
   \begin{align*}
        \m_k^-&=\frac{1}{h}\left(x_{k+1}-x_k\right)+\frac{h}{2}\nabla U(x_k) +\frac{\rho h}{2} \left[\mathcal{D}^{\beta}_{-}{\bf x}_k\right]^1 \\
   \m_{k+1}^+&=\frac{1}{h}\left(x_{k+1}-x_k\right)-\frac{h}{2}\nabla U(x_{k+1}) -\frac{\rho h}{2} \left[\mathcal{D}^{\beta}_{-}{\bf x}_k\right]^2.
   \end{align*}
 Let $\m_k$ be the common value from the matching condition $\m_k^+ = \m_k^-$. 
Solving the previous two equations with respect to $x_{k+1}$ and $\m_{k+1}$, respectively, we obtain
\begin{subequations}\label{eq:qpform}
    \begin{align}
    x_{k+1} &= x_k + h\!\left(\m_k - \frac{h}{2}\nabla U(x_k) - \frac{\rho h}{2}\left[\mathcal D_{-}^{\beta} \mathbf x_k\right]^1\right).\label{eq:Gmumentum1}\\
  \m_{k+1} &= \m_k - \frac{h}{2}\bigl(\nabla U(x_k) + \nabla U(x_{k+1})\bigr) - \frac{\rho h}{2}\Big(\big[\mathcal D_{-}^{\beta} \mathbf x_k\big]^1 + \big[\mathcal D_{-}^{\beta} \mathbf x_k\big]^2\Big).\label{eq:Gmumentum2}
    \end{align}
\end{subequations} 
Let $\Phi_h:(x_k,\m_k)\mapsto (x_{k+1},\m_{k+1})$ be the associated one-step flow, and let $\varphi_t:(x,\m)\mapsto (x(t),\m(t))$ be the exact flow corresponding to the continuous system~\eqref{eq:xpart_frac_hamiltonian_sys}. We want to study the local truncation error of the method
\begin{equation}\label{eq:local_err}
    \|\varphi_h(x(t_k),\m(t_k))-\Phi_h(x(t_k),\m(t_k))\|:=\|(x_{k+1},\m_{k+1})-(x_{k+1}^h,\m_{k+1}^h)\|.
\end{equation}
Following Theorem~\ref{thm:RKCQ_convergence} and Remark~\ref{rem:full-order}, for $K^{\beta}(s) = s^{\beta}$ with $0 < \beta < 1$, the
 error satisfies
\begin{equation}\label{eq:stage_frac}
 \varepsilon_k^i := \mathcal{D}_-^{\beta}x(t_k^i)
  - D_{-}^{\beta} x(t_k^i)
  = \mathcal{O}\!\left(h^{q+1-\beta}\right)= \mathcal{O}\!\left(h^{2-\beta}\right), \qquad i = 1,2.
\end{equation}
With these ingredients, we can establish the following convergence order result.
\begin{theorem}\label{thm:theoretical_convergence}
Let $\beta\in (0,1)$  and consider the RKCQ associated to the $2$-stage Lobatto~IIIC under the assumptions of Theorem~\ref{thm:RKCQ_convergence}. Then, the corresponding local  errors~\eqref{eq:local_err} to the FVI scheme~\eqref{eq:qpform} with respect to the continuous system~\eqref{eq:xpart_frac_hamiltonian_sys} satisfy
 \begin{equation}\label{eq:global-general}
        \|x_{k+1}^h - x(t_{k+1})\|=  \mathcal O\!\left(h^{\min(3,\, 4-\beta)}\right),\qquad  \|\m_{k+1}^h - \m(t_{k+1})\| =\mathcal O\!\left(h^{\min(3,\, 3-\beta)}\right).
    \end{equation}
In particular, for $\beta \leq 1$, the method is second-order accurate.
\end{theorem}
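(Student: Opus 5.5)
We compare one step of the scheme \eqref{eq:qpform} with a Taylor expansion of the exact solution. Start from exact data $(x_k,\m_k)=(x(t_k),\m(t_k))$ and let $(x^h_{k+1},\m^h_{k+1})$ be the output of \eqref{eq:Gmumentum1}--\eqref{eq:Gmumentum2}. The fractional terms are replaced by their exact counterparts plus the stage errors, $\bigl[\mathcal D_-^{\beta}\mathbf x_k\bigr]^i = D_-^{\beta}x(t_k^i)+\varepsilon_k^i$. Here $t_k^1=t_k$ and $t_k^2=t_{k+1}$, and $\varepsilon_k^i=\mathcal O(h^{2-\beta})$ by \eqref{eq:stage_frac}, which rests on Theorem~\ref{thm:RKCQ_convergence} and Remark~\ref{rem:full-order}. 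Each local error then splits into a \emph{consistency part}, namely the error of the same scheme with exact fractional derivatives (a Störmer--Verlet/trapezoidal type step for a smooth forcing $g(t)=-\nabla U(x(t))-\rho D_-^{\beta}x(t)$), plus a \emph{CQ perturbation part} linear in $\varepsilon_k^i$.

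\textbf{Position.} Expand $x(t_{k+1})=x_k+h\m_k+\tfrac{h^2}{2}\dot\m(t_k)+\mathcal O(h^3)$. Substituting $\dot\m(t_k)=-\nabla U(x_k)-\rho D_-^{\beta}x(t_k)$ from \eqref{eq:xpart_frac_hamiltonian_sys}, the $h^2$ term matches \eqref{eq:Gmumentum1} exactly. The consistency error is therefore $\mathcal O(h^3)$, with constant controlled by $\dddot x$, i.e.\ by the regularity of $D_-^{\beta}x$ assumed through Theorem~\ref{thm:RKCQ_convergence}. The perturbation part is $\tfrac{\rho h^2}{2}\varepsilon_k^1=\mathcal O(h^{4-\beta})$. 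Together these give $\mathcal O(h^{\min(3,4-\beta)})$.

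\textbf{Momentum.} Write $\m(t_{k+1})-\m(t_k)=\int_{t_k}^{t_{k+1}}g(t)\,\dd t$. The scheme \eqref{eq:Gmumentum2} applies the trapezoidal rule to this integral with $x(t_{k+1})$ replaced by $x^h_{k+1}$. There are three contributions to control.
\begin{itemize}
\item The trapezoidal quadrature error is $\mathcal O(h^3)$, requiring $g\in C^2$.
\item Evaluating $\nabla U$ at $x^h_{k+1}$ instead of $x(t_{k+1})$ costs $\tfrac h2\,\mathrm{Lip}(\nabla U)\,\|x^h_{k+1}-x(t_{k+1})\|=\mathcal O(h^{\min(4,5-\beta)})$.
\item The fractional stage errors contribute $\tfrac{\rho h}{2}(\varepsilon_k^1+\varepsilon_k^2)=\mathcal O(h^{3-\beta})$.
\end{itemize}
Summing gives $\mathcal O(h^{\min(3,3-\beta)})=\mathcal O(h^{3-\beta})$.

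\textbf{Global order.} A standard Lady Windermere's fan argument converts local errors into a global bound. The map $\Phi_h$ is Lipschitz with constant $1+Ch$ in $(x,\m)$. There is one caveat: the history terms in $\mathcal D_-^{\beta}$ also depend Lipschitz-continuously on all previous iterates, with weights $\|W_n^{(\beta)}\|\lesssim h^{-\beta}n^{\beta-1}$. These terms enter multiplied by $h$, so their sum is a discrete weakly singular kernel, and a discrete fractional Grönwall inequality controls it. Summing $N=T/h$ local errors loses one power of $h$, giving global order $\min(2,2-\beta)$ in momentum and $\min(2,3-\beta)$ in position. For $\beta$ near $0$ this is second order, consistent with the ``in particular'' statement. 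Strictly, $2-\beta<2$, so as stated that claim should be read as rate $2-\beta$, essentially second order for small $\beta$; I would check this against the numerics.

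\textbf{Main obstacle.} The delicate step is justifying \eqref{eq:stage_frac} at the scheme level. Theorem~\ref{thm:RKCQ_convergence} concerns CQ applied to the exact function. The scheme, by contrast, applies it to the discrete iterates, per the remark following \eqref{RKConQuaPlus}, and requires vanishing initial derivatives of $x$. For the local error, where only past data enter and that data is exact, I would assume the compatibility conditions $x^{(j)}(0)=0$ (or subtract a Taylor polynomial) and cite \eqref{eq:stage_frac} directly. The history coupling is deferred to the Grönwall step.
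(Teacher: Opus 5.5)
Your local-error analysis follows the paper's proof. The paper also Taylor-expands $x(t_{k+1})$ and $\m(t_{k+1})$ from exact data and inserts the stage errors \eqref{eq:stage_frac}. It treats $D_-^{\beta}x(t_{k+1})$ by expanding $g=D_-^{\beta}x$ with remainder $\frac{h^2}{2}D_-^{\beta+2}x(\xi_k)$, which is exactly your trapezoidal-rule error for $g\in C^2$. Your explicit Lipschitz accounting for $\nabla U(x^h_{k+1})$ is a small tightening that the paper omits.

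One statement in your last paragraph is wrong, however: it is not true that only past data enter the CQ terms. For the $2$-stage Lobatto~IIIC, $\mathbf x_k=(x_k,x_{k+1})$. Moreover, $W_0^{(\beta)}=h^{-\beta}(A^{-1})^{\beta}$ is a full matrix, with off-diagonal entries $\pm h^{-\beta}2^{\beta/2}\sin(\beta\pi/4)$. So both stage values $[\mathcal D_-^{\beta}\mathbf x_k]^{1}$ and $[\mathcal D_-^{\beta}\mathbf x_k]^{2}$ contain $x_{k+1}$ with an $\mathcal O(h^{-\beta})$ coefficient, and \eqref{eq:Gmumentum1} is implicit. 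This is harmless. It perturbs the position equation by $\mathcal O(h^{2-\beta})\,\|x^h_{k+1}-x(t_{k+1})\|$, which is absorbed for small $h$, and the momentum equation by $\mathcal O(h^{1-\beta})\cdot\mathcal O(h^3)$. It should nonetheless be stated; the paper passes over it as well.

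Your global paragraph goes beyond the paper, whose proof stops at the local bounds, and as sketched it does not close. Because of the $W_0$ coupling, the one-step map is Lipschitz only with constant $1+Ch^{1-\beta}$, not $1+Ch$. Bounding the history terms in absolute value then yields only a Gr\"onwall factor of the form $\exp(CTh^{-\beta})$, not an $h$-uniform weakly singular kernel. This holds with your bound $h^{-\beta}n^{\beta-1}$ and even with the sharp decay $h^{-\beta}n^{-\beta-1}$ of the weights for $K(s)=s^{\beta}$.

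A uniform bound requires the cancellation in the weights, for example by writing $s^{\beta}=s^{\beta-1}\cdot s$. For this stiffly accurate method, the CQ with symbol $\gamma(z)/h$ maps the stage data to the difference quotient $(x_{k+1}-x_k)/h$ in both stages (using $x_{k-1}^{2}=x_k^{1}$ and a zero initial error). Hence $h\,\mathcal D_-^{\beta}$ of the error becomes an order-$(1-\beta)$ CQ integral, with weights $\lesssim h^{1-\beta}(n+1)^{-\beta}$, of the increments $e_{n+1}-e_n\approx h\,e^{\m}_n$. A discrete fractional Gr\"onwall argument can then be closed.

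Your separate global rate $\min(2,3-\beta)$ for the position also does not follow. Momentum errors enter the position through $x_{k+1}=x_k+h\m_k+\cdots$, so an $\mathcal O(h^{2-\beta})$ momentum error produces an $\mathcal O(h^{2-\beta})$ position error.

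Your main point is nevertheless correct: the local bounds alone give only global rate $2-\beta$. The ``second-order accurate'' clause is not proved by the paper's argument either. The paper attributes the observed order $2$ to a possible cancellation in $\varepsilon_k^1+\varepsilon_k^2$ (Remark~\ref{rem:theoretical_convergence}) and to the numerics.
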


\begin{proof}
Let $(x(t),\m(t))$ be the exact solution of~\eqref{eq:xpart_frac_hamiltonian_sys}. For a fixed $t_k$, we set $x(t_k):=x_k$ and $\m (t_k):=\m_k$. The Taylor series expansion of $x(t_{k+1})$ and $\m(t_{k+1})$ gives
\begin{align}
    x(t_{k+1}) &= x_k + h\m_k + \frac{h^2}{2}\dot{\m } + \mathcal O(h^3) \nonumber \\
    &= x_k + h\m_k - \frac{h^2}{2}\nabla U(x_k) - \frac{h^2\rho}{2} D_{-}^{\beta} x(t_k) + \mathcal O(h^3). \label{eq:xex-general}\\
\m (t_{k+1}) &= \m_k + h\dot \m_k + \frac{h^2}{2}\ddot{\m } + \mathcal O(h^3) \nonumber \\
    &= \m_k -h\Big(\nabla U(x_k)+ \rho  D_{-}^{\beta}x_k \Big) +\frac{h^2}{2}\bigg(-\frac{d}{dt}\nabla U(x_k)- \rho  \frac{d}{dt}D_{-}^{\beta}x_k \bigg)+ \mathcal O(h^3).\label{eq:pex-general}
\end{align}
For the numerical update~\eqref{eq:Gmumentum1}, we obtain
    \begin{equation*}
  x_{k+1}^h = x_k +h \m_k - \frac{h^2}{2}\nabla U(x_k) - \frac{ h^2\rho}{2}\mathcal D_{-}^{\beta}  x_k^1,
    \end{equation*}
where $x_k^1:=x(t_k+c_1h)=x(t_k)$. Using~\eqref{eq:stage_frac},
    \begin{align*}
  x_{k+1}^h &= x_k +h \m_k - \frac{h^2}{2}\nabla U(x_k) - \frac{ h^2\rho}{2}\Big( D_{-}^{\beta}  x_k^1+\mathcal{O}\!\left(h^{2-\beta}\right) \Big)\\
 &= x_k +h \m_k - \frac{h^2}{2}\nabla U(x_k) - \frac{ h^2\rho}{2} D_{-}^{\beta}  x_k^1+\mathcal{O}\!\left(h^{4-\beta}\right). 
    \end{align*}    
Thus, comparing this last expression with~\eqref{eq:xex-general} yields
\begin{equation*}
   \| x_{k+1}^h-x(t_{k+1}) \| = \mathcal O\!\left(h^{4-\beta}\right) + \mathcal O(h^3) = \mathcal O\!\left(h^{\min(3,\, 4-\beta)}\right) . \label{eq:x-local-combined}
\end{equation*}
For the scheme~\eqref{eq:Gmumentum2}, we treat each part separately in~\eqref{eq:Gmumentum2}.  For the conservative part, we have
  \begin{equation}\label{eq:conservative_expand}
  \frac{h}{2}(\nabla U(x_k) + \nabla U(x_{k+1})) =
  h\nabla U(x_k)+ \frac{h^2}{2}\frac{d}{dt}\nabla U(x_{k})+\mathcal{O}(h^3).
    \end{equation}
For the fractional part, at each stage, we write using~\eqref{eq:stage_frac}
  \begin{equation}
\frac{\rho h}{2}\Big(\mathcal D_{-}^{\beta}  x_k^1 +\mathcal D_{-}^{\beta}x_k^2\Big)=\frac{\rho h}{2}\Big(D_{-}^{\beta}  x(t_k) + D_{-}^{\beta}x(t_{k+1})+\mathcal{O}\!\left(h^{2-\beta}\right)\Big),
\label{eq:pnum-general}
\end{equation}
where $D_{-}^{\beta} x(t)$ is the exact fractional derivative evaluated at $t_k$ and $t_{k+1}$. As $D_{-}^{\beta} x(t_{k+1})$ is nonlocal, we cannot use the Taylor expansion for $x(t_{k+1})$ and then  apply $D_{-}^{\beta}$. Instead, we proceed as follows. Once the exact solution $x(\cdot)$ is fixed, $g(t):=D_{-}^{\beta} x(t)$ is a real-valued function of $t$, and its time derivatives can be expressed as higher-order fractional derivatives of $x$ as
$$g'(t)=\frac{d}{dt}D_{-}^{\beta} x(t)=D_{-}^{\beta+1} x(t),\qquad g''(t)=D_{-}^{\beta+2} x(t).$$
This follows from the composition rule
$\frac{d^n}{dt^n}\circ D_{-}^{\beta} = D_{-}^{(\beta+n)}$ for the RL derivative. The Taylor expansion thus takes the form
\begin{equation}\label{eq:frac_Taylor}
D_-^{\beta}x(t_{k+1})
  = D_-^{\beta}x(t_k)
  + h\,D_-^{\beta+1}x(t_k)
  + \frac{h^2}{2}\,D_-^{\beta+2}x(\xi_k),
\end{equation}
for some $\xi_k \in (t_k, t_{k+1})$. Substituting~\eqref{eq:frac_Taylor}
 into~\eqref{eq:pnum-general} gives
  \begin{equation}\label{eq:frac_Taylor_sub}
\frac{\rho h}{2}\Big(\mathcal D_{-}^{\beta}  x_k^1 +\mathcal D_{-}^{\beta}x_k^2\Big)=\rho h D_{-}^{\beta}  x(t_k) + \frac{\rho h^2}{2} D_{-}^{\beta+1}x(t_{k})+\mathcal{O}\!\left(h^{3-\beta}\right).
\end{equation}
Replacing~\eqref{eq:conservative_expand} and~\eqref{eq:frac_Taylor_sub} into~\eqref{eq:Gmumentum2}, we obtain
  \begin{align}
 \m_{k+1}^h &= \m_k  -h\nabla U(x_k)- \frac{h^2}{2}\frac{d}{dt}\nabla U(x_{k})- \rho h D_{-}^{\beta}  x(t_k) - \frac{\rho h^2}{2} D_{-}^{\beta+1}x(t_{k})+\mathcal{O}\!\left(h^{3-\beta}\right).
\end{align}
From this last expression and~\eqref{eq:pex-general}, we obtain
$\|\m_{k+1}^h-\m(t_{k+1})\|=\mathcal{O}\!\left(h^{3-\beta}\right).$
\end{proof}

\begin{remark}\label{rem:theoretical_convergence}
From~\eqref{eq:stage_frac}, each $\varepsilon_k^i$ satisfies $\varepsilon_k^i = \mathcal O(h^{2-\beta})$ for $ i=1,2$, $k\geq 0$ individually. Thus, when substituting into~\eqref{eq:pnum-general}, one would naively expect the leading term to be of the form  $C_1 h^{2-\beta}+C_2h^{2-\beta}=(C_1+C_2) h^{2-\beta}$. However, this estimate does not exclude the possibility of cancellation at leading order, thereby increasing the order of the numerical scheme. 
\end{remark}

\subsection{Two-dimensional Forced Harmonic Oscillator}
We consider the two-dimensional oscillator system  subject to linear damping  (with $\beta=1$),
\begin{subequations}\label{eq:damped-oscillator}
\begin{align}
    \ddot x_1 +\eta_1x_1  =- \rho_1\,D_{-}^{\beta} x_1, \\
   \ddot x_2 +\eta_2x_2   =- \rho_2\,D_{-}^{\beta}x_2.
\end{align}
\end{subequations}
The associated conservative Lagrangian is
$$L(x,\dot x)=\frac{1}{2}\left(\dot  x_1^2+\dot x_2^2\right)-\frac{1}{2} \left(\eta_1x_1^2+\eta_2x_2^2\right),$$ 
with $x=(x_1,x_2)$.
In the simulations, the initial values and the parameters are chosen as  
$$x_1(0)=1,\ x_2(0)=-0.5,\ \dot x_1(0) =0.5,\ \dot x_2(0)=0,\quad \eta=\eta_1=\eta_2=1,\quad \rho=\rho_1=\rho_2=0.05.$$
The numerical trajectories shown in Figure~\ref{fig:forced_oscillator}, produced by Algorithm~\ref{alg:FractionalAlgorithm} with 2-stage Lobatto~IIIC method over $[0,150]$ with time step $h=0.2$, are in excellent agreement with the exact solution.
\begin{figure}[htbp]
\centering
\includegraphics[width=.9\textwidth]{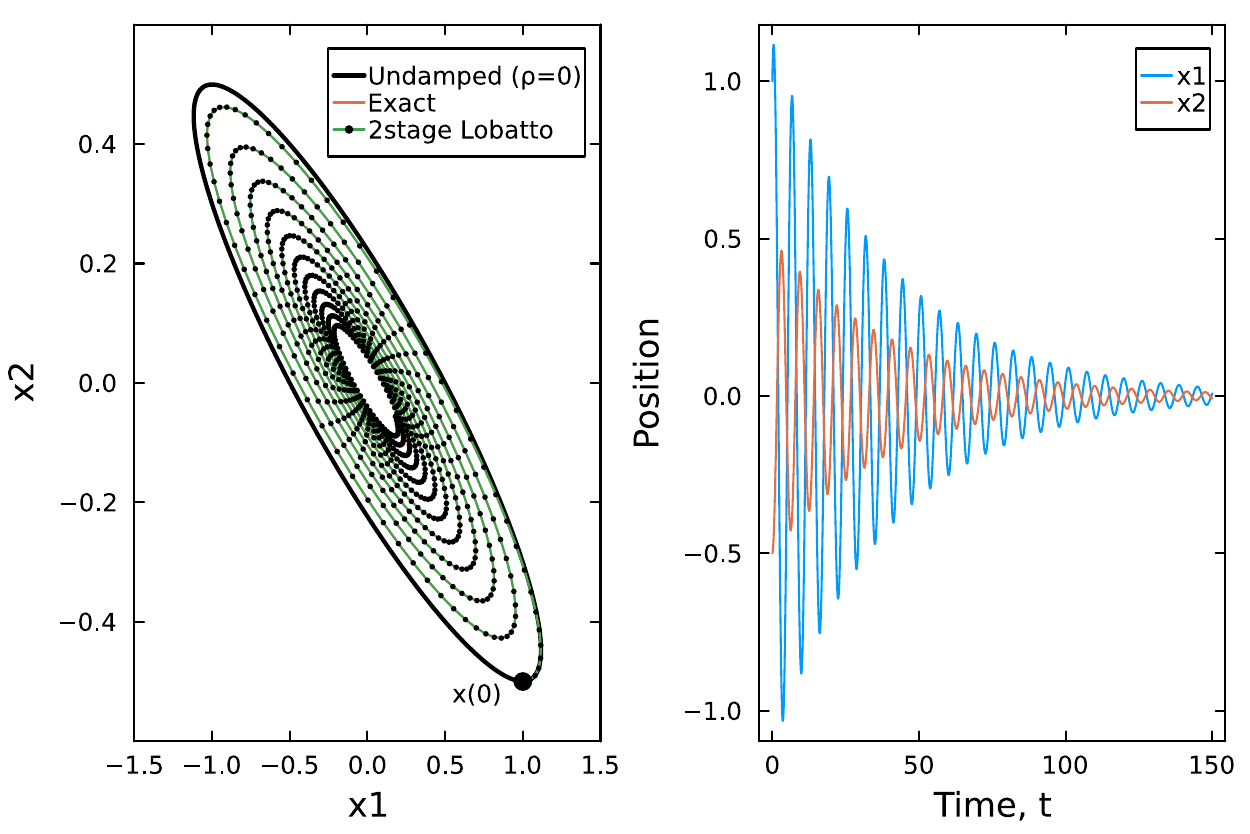}
\caption{Forced oscillator~\eqref{eq:damped-oscillator} $(\beta=1)$. Numerical trajectories obtained by Algorithm~\ref{alg:FractionalAlgorithm} with the 2-stage Lobatto over the interval $[0,150]$ with $h=0.2$.
}
\label{fig:forced_oscillator}
\end{figure}

\noindent The energy of the system is given by the Hamiltonian 
$$E(t)=H(x,\m)=\frac{1}{2}\|\m\|^2+\frac{1}{2}\|x\|^2$$
with $x=(x_1,x_2)$ and $\m=(\m_1,\m_2)$. In Figure~\ref{fig:forced_oscillator_energy} (top left), we observe the numerical energy decay over the interval $[0,120]$ with step size $h=0.2$.  The top right figure displays the relative error of the energy in logarithmic scale, computed  as
\begin{equation}\label{eq:ErrEnergy}
    E_{\text{err}}(t_k)=\frac{E_k-E(t_k)}{\max_{t\in[0,T]}|E(t)|}.
\end{equation}
where $E_k=H(x_k,\m_k)$, i.e.~the energy is evaluated at the approximated solution $x_k=(x_{1,k},x_{2,k})$, $\m_k=(\m_{1,k},\m_{2,k})$, and $E(t_k)=H(x_{\text{ex}}(t_k),\m_{\text{ex}}(t_k))$ is the exact energy at time $t_k$. The relative variations in energy, as can be observed in Figure~\ref{fig:forced_oscillator_energy} (top right),  remain remarkably small, confirming that this physical quantity is very well approximated by the proposed numerical methods. Finally, the bottom figure presents the energy decay   
using FVI with 2-stage Lobatto~IIIC scheme over a long time interval $[0,2000]$ with step size $h=0.5$ and damping factor $\rho=0.0015$. For comparison, the result obtained using the classical RK4 method is also shown. Figure~\ref{fig:forced_oscillator_energy} (bottom) confirms that the proposed method reproduces the energy decay more accurately than the classical RK4 method over long time intervals.

\begin{figure}[htbp]
\centering
\includegraphics[width=.9\textwidth]{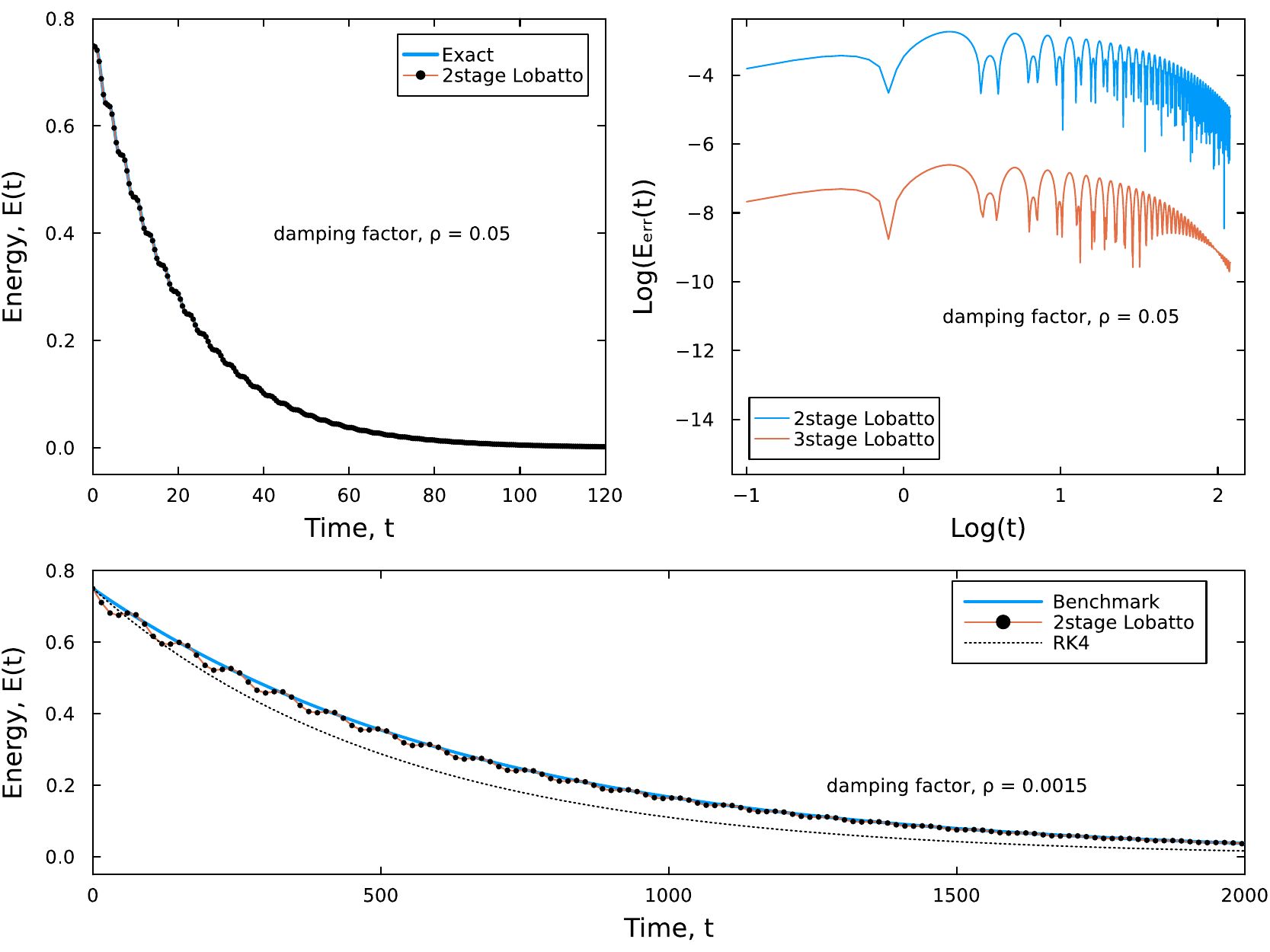}
\caption{Forced Oscillator~\eqref{eq:damped-oscillator}. Energy dissipations and relative errors  of the forced oscillator using FVI with 2-stage Lobatto~IIIC.
 Energy decay  (\textit{top left}) and
energy relative error \eqref{eq:ErrEnergy} (\textit{top right}) over $[0,120]$ with $h=0.2$.  Energy decay (\textit{bottom}) compared with the classical RK4 method over $[0,2000]$ with  $h=0.5$.}
\label{fig:forced_oscillator_energy}
\end{figure}
Now, we test the efficiency of the FVI scheme~\eqref{FinalRK1}-\eqref{FinalRKi} based on  $r$-stage Lobatto~IIIC  with $r=2,3,4$. Let $(x(t) , \m(t))$ be the exact position-momentum solution of the Hamiltonian system associated to~\eqref{eq:damped-oscillator}. The errors at the main nodes are then computed as the global errors
\[\max_{\substack{k\in\{0,\ldots,N\} \\ i\in\{1,2\}}} \left|x_k - x(t_k)\right|,\qquad\max_{\substack{k\in\{0,\ldots,N\} \\ i\in\{1,2\}}} \left|\m_k - \m(t_k)\right|.
 \]
The numerical results in Figure~\ref{fig:FVIcovergence-integer} consistently demonstrate the expected second-order accuracy for the scheme based on the 2-stage Lobatto~IIIC method. The numerically determined orders for the 3- and 4-stage Lobatto~IIIC methods are $4$ and $6$, respectively.  
For an $r$-stage Lobatto~IIIC method with $r=2,3,4$, the proposed scheme achieves order $p=2r-2$. This observation coincides with the superconvergence phenomenon reported by~\cite{SinaVerm} in the context of the discrete Lagrange d'Alembert principle.

\begin{figure}[h]
\centering
\includegraphics[width=.6\textwidth]{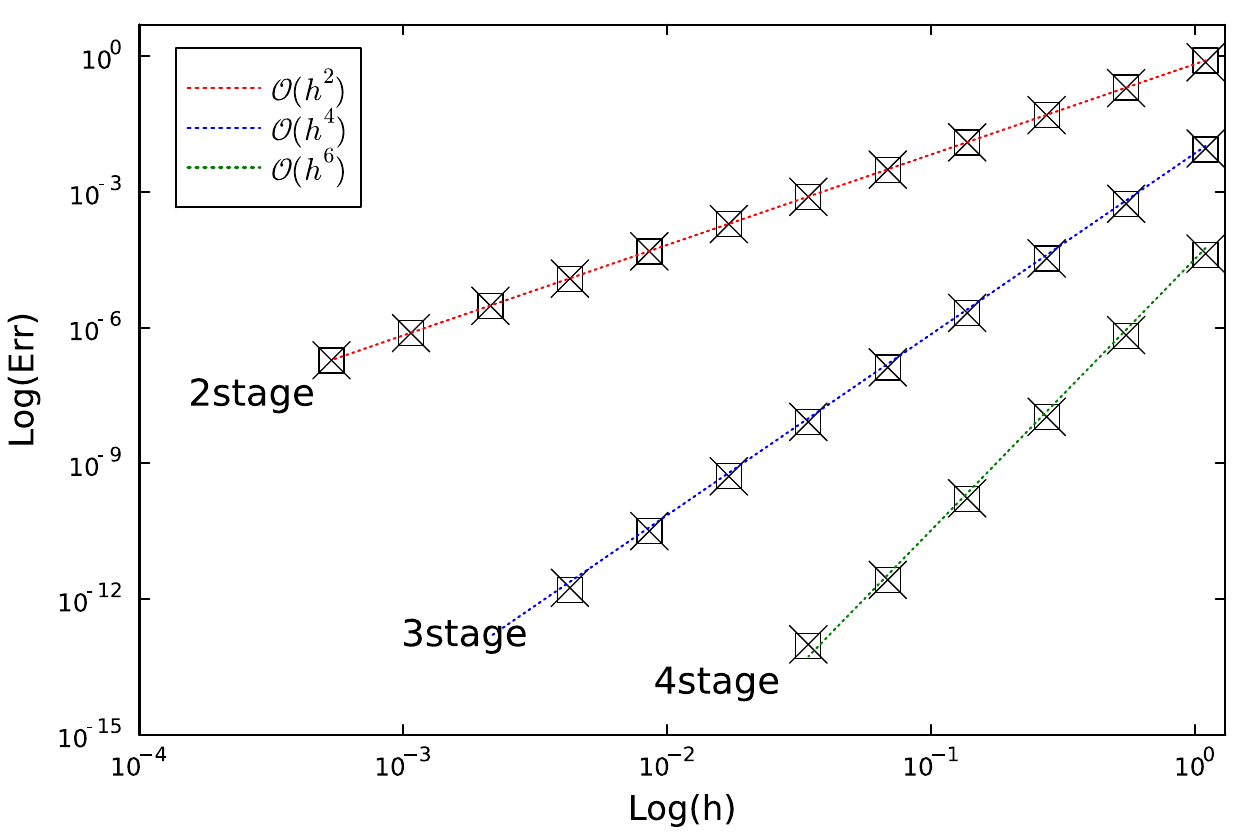}
\caption{Forced Oscillator~\eqref{eq:damped-oscillator}. Numerical verification of the convergence order for position $x$ and momentum $\m$. The results are presented in a logarithm scale plot over the time interval $[0,35]$ with $N=2^i$ for $i=5,\ldots,16$. The results show that the errors for both $x$ and $\m$ are perfectly superimposed, confirming a uniform convergence rate for both variables.}
\label{fig:FVIcovergence-integer}
\end{figure}

\subsection{Bagley-Torvik Equations (BTE)} Consider the fractionally damped equation with forcing:
\begin{equation}
\ddot x + \rho\ D_{-}^{\beta} x +  x =f(t),\quad x(0)=\dot x(0)=0. \label{eq:Torfik1}
\end{equation}
which belongs to the family of Bagley-Torvik equations and can be derived via the restricted variational principle using a time-dependent Lagrangian of the form
$$L(t,x,v)=\frac{v^2}{2}-\frac{x^2}{2}+xf(t).$$
In particular, equations involving $1/2$-order or $3/2$-order fractional derivatives can model systems in which damping depends on frequency, and arise  in the motion of a rigid plate immersed in a viscous fluid and in the behaviour of a gas in a fluid~\cite{Torvik,Igor}. Analytical solutions of such problems are generally difficult to obtain and,  when available, are often expressed in terms of special functions, e.g.,~the Mittag-Leffler function, making their evaluation computationally expensive. We therefore validate the integrator on three complementary
configurations of \eqref{eq:Torfik1}: a standard benchmark with discontinuous forcing, a benchmark with smooth forcing, and a family with a closed-form solution.

We begin with the standard benchmark with discontinuous forcing, which probes the behaviour of
the scheme in the absence of regularity. We take
$$\beta=\frac{3}{2},\quad\rho=0.15,\qquad f(t)=\begin{dcases}
8,\quad 0\leq t\leq 1\\
0,\quad t\geq 1.
\end{dcases}$$
For this problem, a high-accuracy reference solution is obtained via the Talbot contour method for numerical Laplace-transform inversion~\cite{talbot_79,weideman_2006}. The resulting solution serves as a reliable benchmark against which alternative numerical schemes can be validated. In Figure~\ref{fig:fractional_solution_energy} (top left), the numerical solution is plotted over $[1,40]$ with step size $h=0.08$ using the 2-stage  Lobatto~IIIC method. To assess efficiency, we measure the errors in both the configuration and momentum variables by
\begin{equation}\label{max_err}
    \max_{\substack{k\in\{0,\ldots,N\}}} \left|x_k - x(t_k)\right|,\qquad \max_{\substack{k\in\{0,\ldots,N\}}} \left|\m_k - \m(t_k)\right|
\end{equation}
over the time interval $[1,20]$ with  $N = 10\times 2^i,\  i =1,\ldots,8$. 
Although the Talbot method is efficient, its accuracy deteriorates for $t$ close to $0$~\cite{talbot_79}. For this reason, the convergence study uses a Talbot reference solution evaluated away from the origin $t=0$.
As shown in Figure~\ref{fig:fractional_solution_energy} (top right), the convergence rates for position and momentum eventually saturate at first order for the 2-stage Lobatto~IIIC method. This is expected: the order predicted by Theorem~\ref{thm:theoretical_convergence} requires at least  class $C^{3+\beta}$; see Theorem~\ref{thm:RKCQ_convergence}, which is not available here due to the discontinuity of the forcing term, so one would not expect high-order accuracy without sufficient regularity, regardless of the number of stages employed. The energy behaviour is consistent with this. We define the mechanical energy as
$$E(t)=\frac{\dot x(t)^2}{2}+\frac{ x(t)^2}{2}.$$
Taking the time derivative yields $\dot{E}(t) = \dot{x}\,\ddot{x} + x\,\dot{x} = \dot{x}(\ddot{x} + x)$, 
and using equation~\eqref{eq:Torfik1} gives
$$\dot{E}(t) = \dot{x}\,\ddot{x} + x\,\dot{x} = \dot{x}\big(f(t)-\rho\ D_{-}^{\beta} x\big)= \dot{x}f(t)-\rho\dot{x}\ D_{-}^{\beta} x.$$
In Figure~\ref{fig:fractional_solution_energy} (bottom left), the mechanical energy is plotted over $[0,40]$ with step size $h=0.08$ using the 2-stage  Lobatto~IIIC method.  On the interval $[0,1]$, the external forcing  $f(t)=8$.  The term $8\dot x$ represents the power injected by the external force, while the term $\rho\dot x \ D_{-}^{\beta}x$ corresponds to the energy dissipated by the fractional damping. The sharp spike observed near $t=1$ is a consequence of the discontinuity in the forcing term. Since the motion starts from zero initial conditions, the forcing dominates at early times. 
Therefore, the energy increases in that regime. For $t>1$,  the forcing vanishes and $\dot E=-\rho\dot xD_-^{3/2} x$. As shown in the figure, the energy then decreases monotonically, up to small oscillatory modulations, and tends to zero as time increases.
Restricting to $t \geq 1$ removes the singular region from view, and on this interval the numerical solution obtained with $h=0.08$ agrees closely with the benchmark. For the coarser step size $h = 0.16$, small deviations in the  energy from the benchmark are visible in Figure~\ref{fig:fractional_solution_energy} (bottom right). To highlight the role of regularity in the convergence behaviour, we now consider the same equation with a smooth forcing term.
\begin{figure}[h]
\centering
\includegraphics[width=0.9\textwidth]{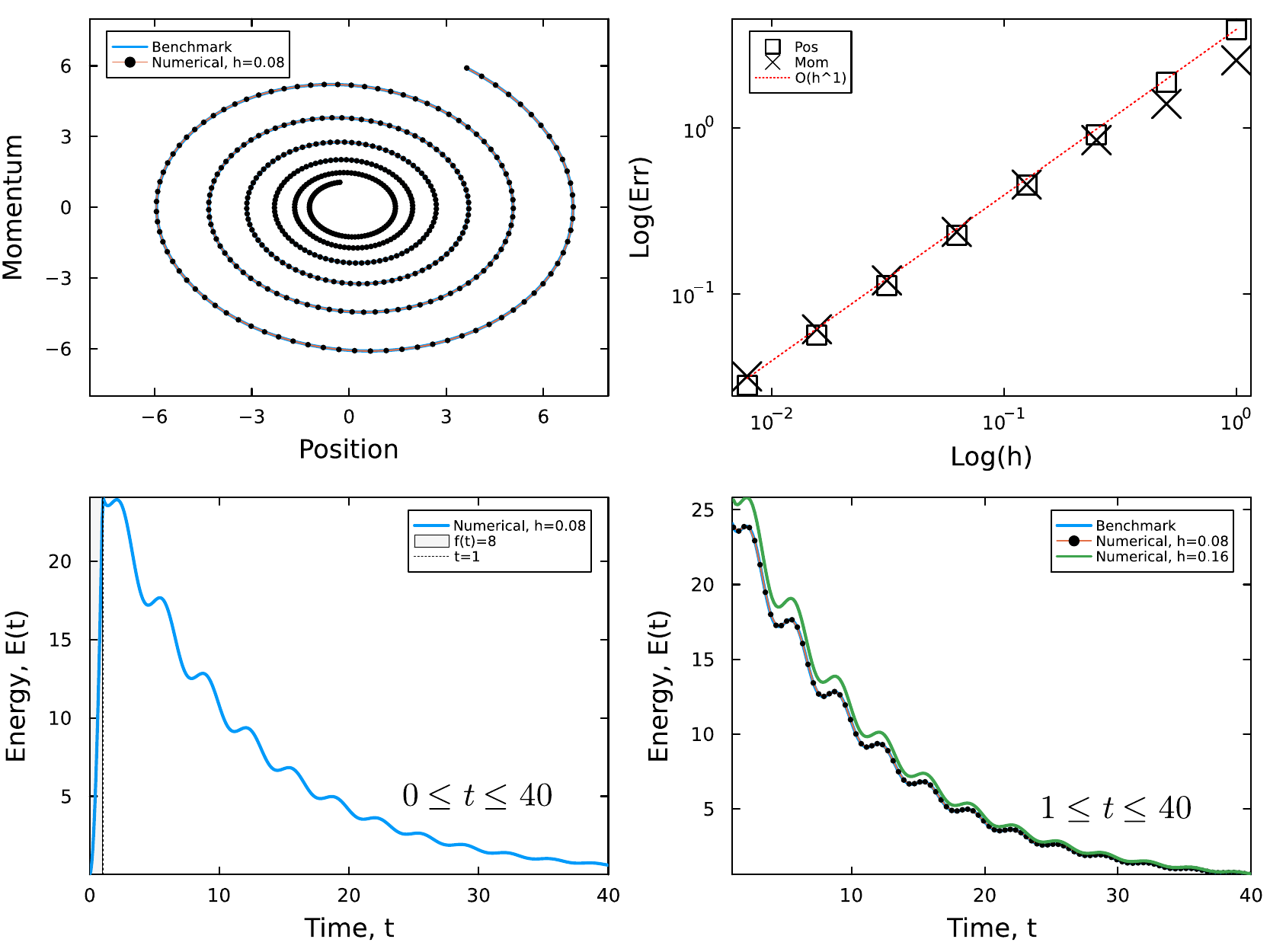}
\caption{BTE with nonsmooth forcing. Comparison between the benchmark and the numerical results for $\beta = 3/2$, obtained using Algorithm~\ref{alg:FractionalAlgorithm} with 2-stage Lobatto~IIIC method: numerical and exact solutions  (\textit{top left}), numerical verification of the convergence order for position $x$ and momentum $\m$ (\textit{top right}) over the time interval $[0,20]$ with  $N = 10\times 2^i,\  i =1,\ldots,8$. Energy evolution (\textit{bottom}).}
\label{fig:fractional_solution_energy}
\end{figure}

To isolate the role of regularity we next consider the same equation with a smooth forcing term and $0<\beta<1$
\begin{equation}
\ddot x + \rho\ D_{-}^{\beta} x + x =\sin(t),\quad x(0)=\dot x(0)=0.  \label{eq:Torfik2}
\end{equation}
Let us pick $\beta= 0.85$ with the same damping factor $\rho =0.15$. We evaluate the motion over the time interval $[0,40]$ using step size $h=0.25$. As shown in Figure~\ref{fig:energy_rel_errors_step_0.25_085} (left), the mechanical energy obtained with the 2-stage Lobatto~IIIC method evolves smoothly, unlike the discontinuous case, the numerical solution  tracks the benchmark over the full interval. Figure~\ref{fig:energy_rel_errors_step_0.25_085} 
  (right) presents the energy errors on a logarithmic scale for the 2-, 3-, and
  4-stage methods. A clear reduction in the error is observed as the number of stages increases, confirming that for smooth forcing data, higher-order methods achieve the expected improvement in accuracy.
\begin{figure}[h]
\centering
\includegraphics[width=0.8\textwidth]{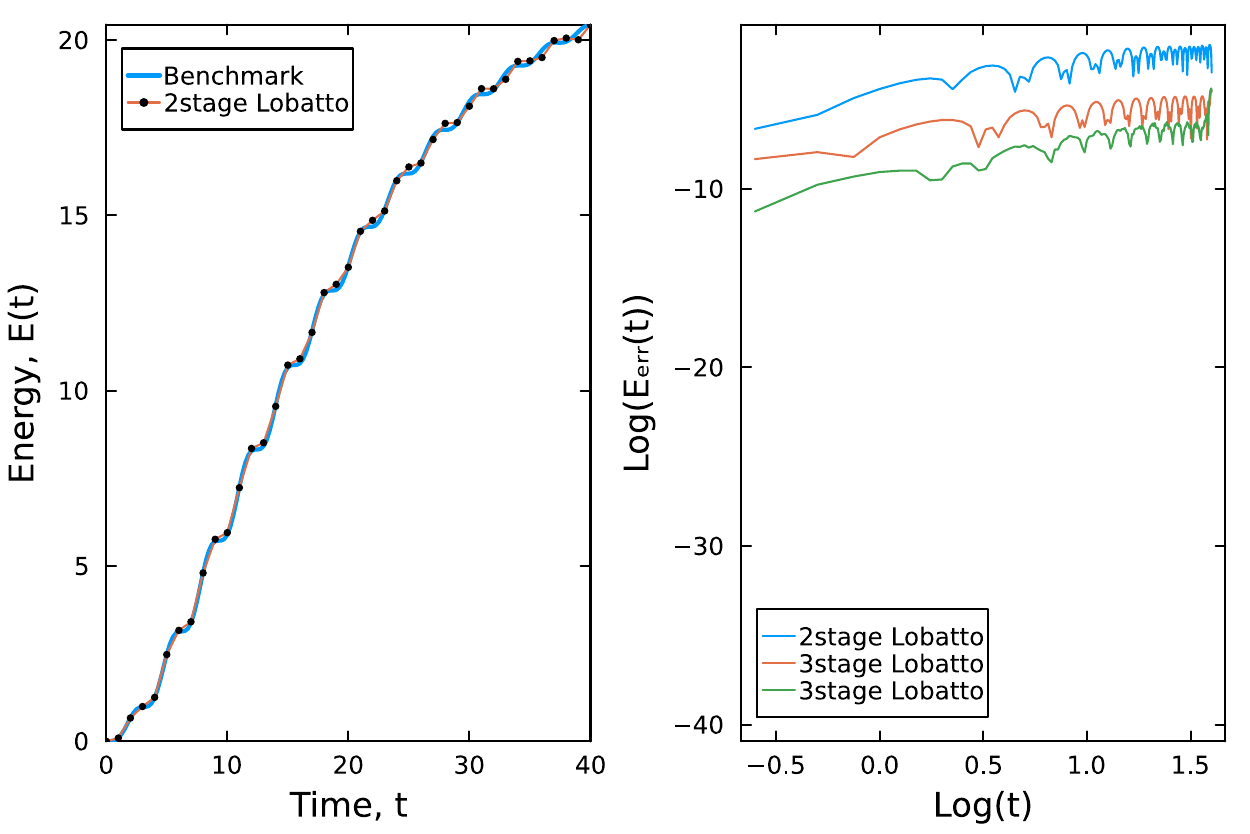}
\caption{BTE with smooth forcing. Numerical results for $\beta = 0.85$, obtained by Lobatto IIIC methods: (\textit{left}) mechanical energy $E(t)$ over $[0, 40]$ computed with the 2‑stage method, compared with the benchmark;  (\textit{right}) energy errors \eqref{eq:ErrEnergy} for the 2‑, 3‑, and 4‑stage methods on a logarithm scale.}
\label{fig:energy_rel_errors_step_0.25_085}
\end{figure}
Figure~\ref{fig:convergence_Benchmark_sin_forcing} measures the errors measures the
position/momentum errors~\eqref{max_err} over $[0,20]$ with $N=2^i$ for $i=4,\ldots,12$, for  $\beta\in \{0.2,\, 0.5,\, 0.75,\, 0.9\}$; the 2-stage scheme attains second order for both the position $x$ and the momentum $\m$ (the latter optimal due to the cancellation effect as in Remark~\ref{rem:theoretical_convergence}), the numerically observed order $4$ for the 3-stage scheme, and and an order slightly below $6$ the 4-stage scheme, which is likely due to the accumulation of round-off and approximation errors on very fine discretizations.
The theoretical rates are thus fully realized once forcing and solution are sufficiently regular.
\begin{figure}[h]
\centering
\includegraphics[width=0.9\textwidth]{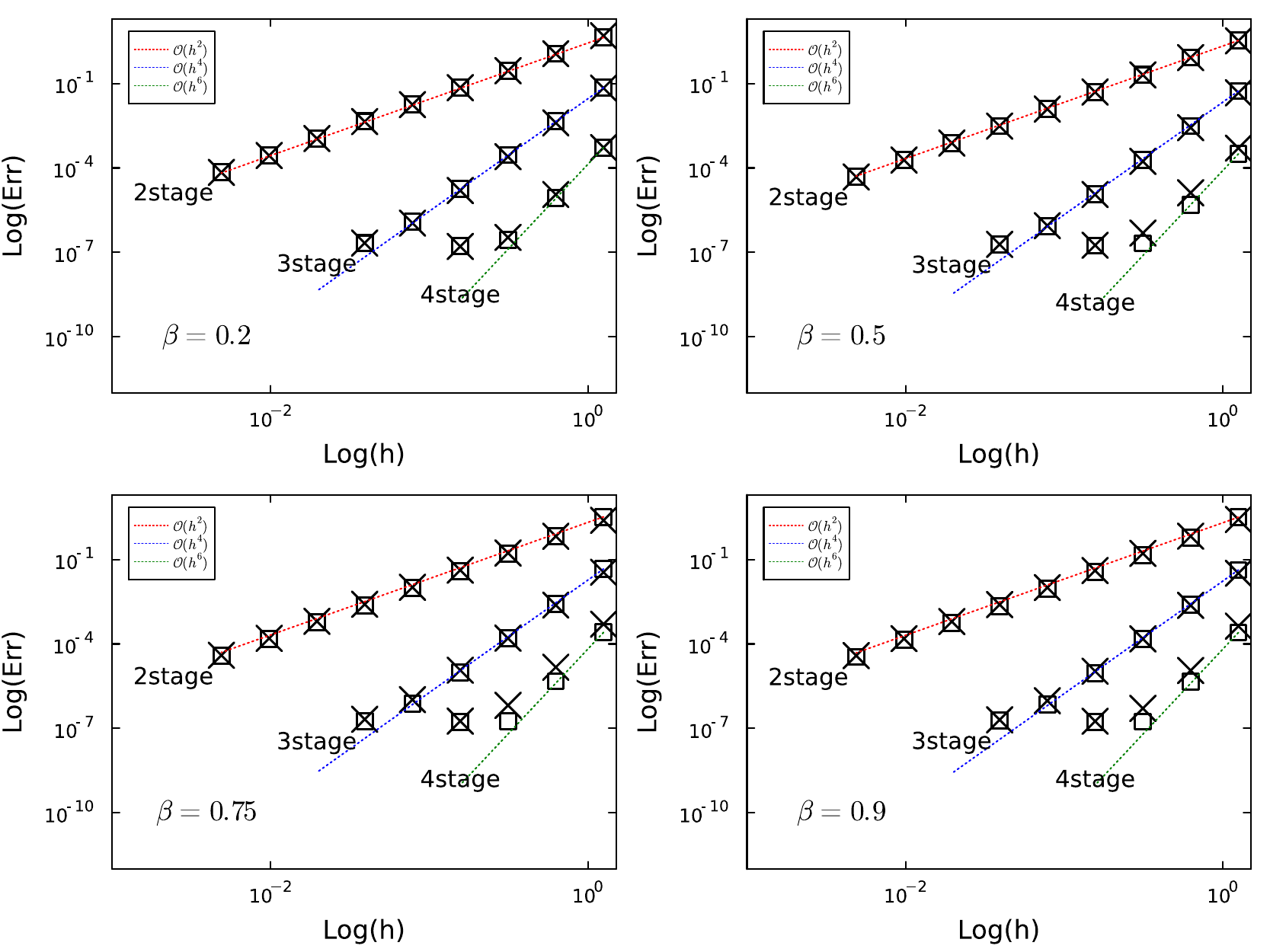}
\caption{BTE with smooth forcing. Numerical verification of the convergence order for position $x$ and momentum $\m$ for different values of $\beta$. The results are presented in a logarithm scale plot over the time interval $[0,20]$ with $N=2^i$ for $i=4,\ldots,12$.}
\label{fig:convergence_Benchmark_sin_forcing}
\end{figure}

Finally, to compare against an exact solution we choose the forcing
\begin{equation}\label{eq:Torfik3}
 f_{\ell,\beta}(t) = (\ell+\beta)(\ell+\beta-1)t^{\ell+\beta-2} + t^{\ell+\beta} + \frac{\Gamma(\ell+\beta+1)}{\Gamma(\ell+1)}  t^\ell\quad \text{with} \quad \rho=1,
\end{equation}
for which $x_{\ell,\beta}(t) = t^{\ell+\beta}$ solves~\eqref{eq:Torfik1} exactly. The errors~\eqref{max_err} are measured on $[0,5]$ with steps $N = 2^i,\ i=2,\ldots,10$, and the orders are shown in Figure~\ref{fig:covergence_all} for various values of $\ell$ and $\beta$. The 2-stage scheme confirms (top plots) the second-order convergence rate of Theorem~\ref{thm:theoretical_convergence} and Remark~\ref{rem:theoretical_convergence}, the 3-stage scheme (middle plots) attains $p = 2r - 2 = 4$, and the 4-stage scheme (bottom plots) again saturates slightly below sixth order. This behaviour is likely due to the combined effects of floating-point round-off and the increased numerical sensitivity involved in computing the RKCQ weights for higher-stage methods. In particular, the larger matrix operations required (the diagonalization and the FFT of Algorithm~\ref{alg:Fractional_weights_Algorithm}) introduce additional numerical errors that accumulate together with round-off effects on the finest grids. These weights are then employed in Algorithm~\ref{alg:FractionalAlgorithm} to approximate the fractional derivatives, so any loss of accuracy in their computation directly affects the overall convergence behaviour.
\begin{figure}[h]
\centering
\includegraphics[width=0.9\textwidth]{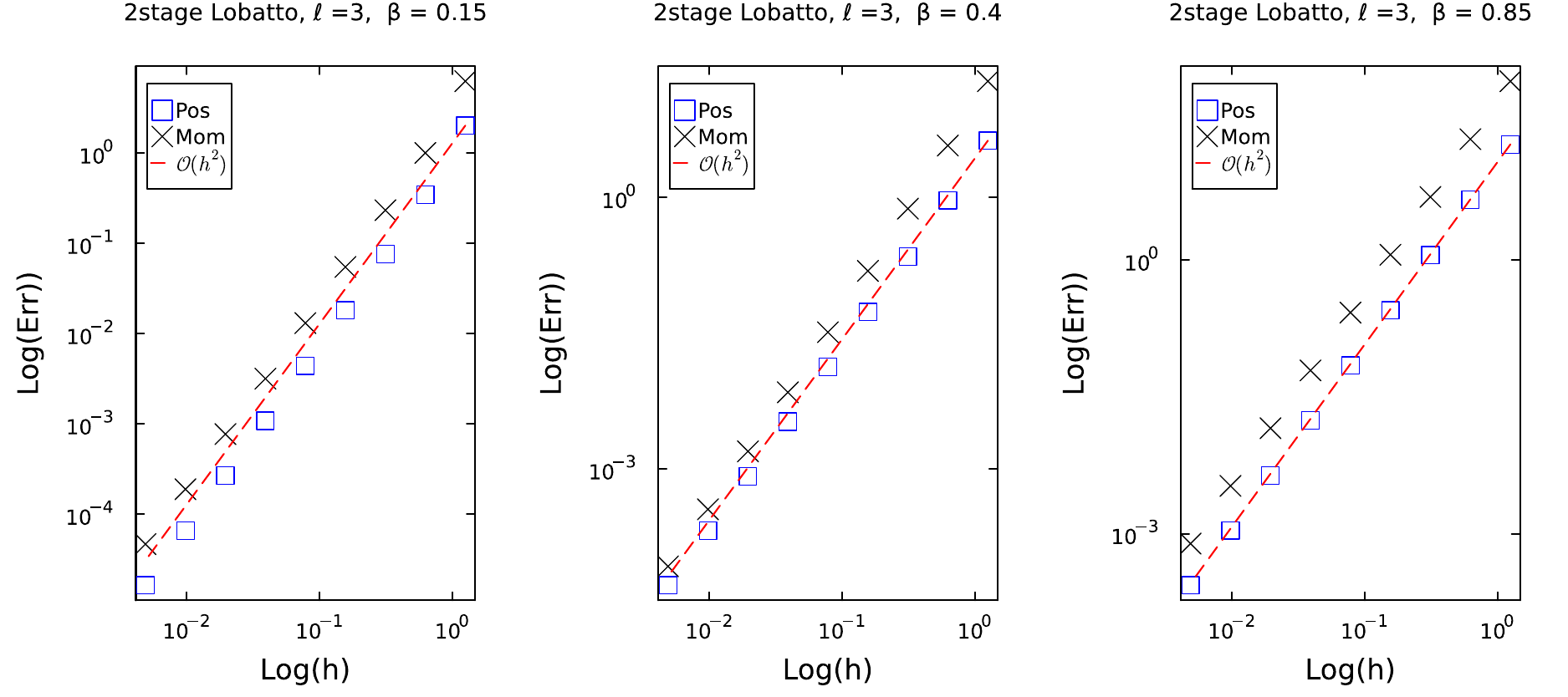}\\
\includegraphics[width=0.9\textwidth]{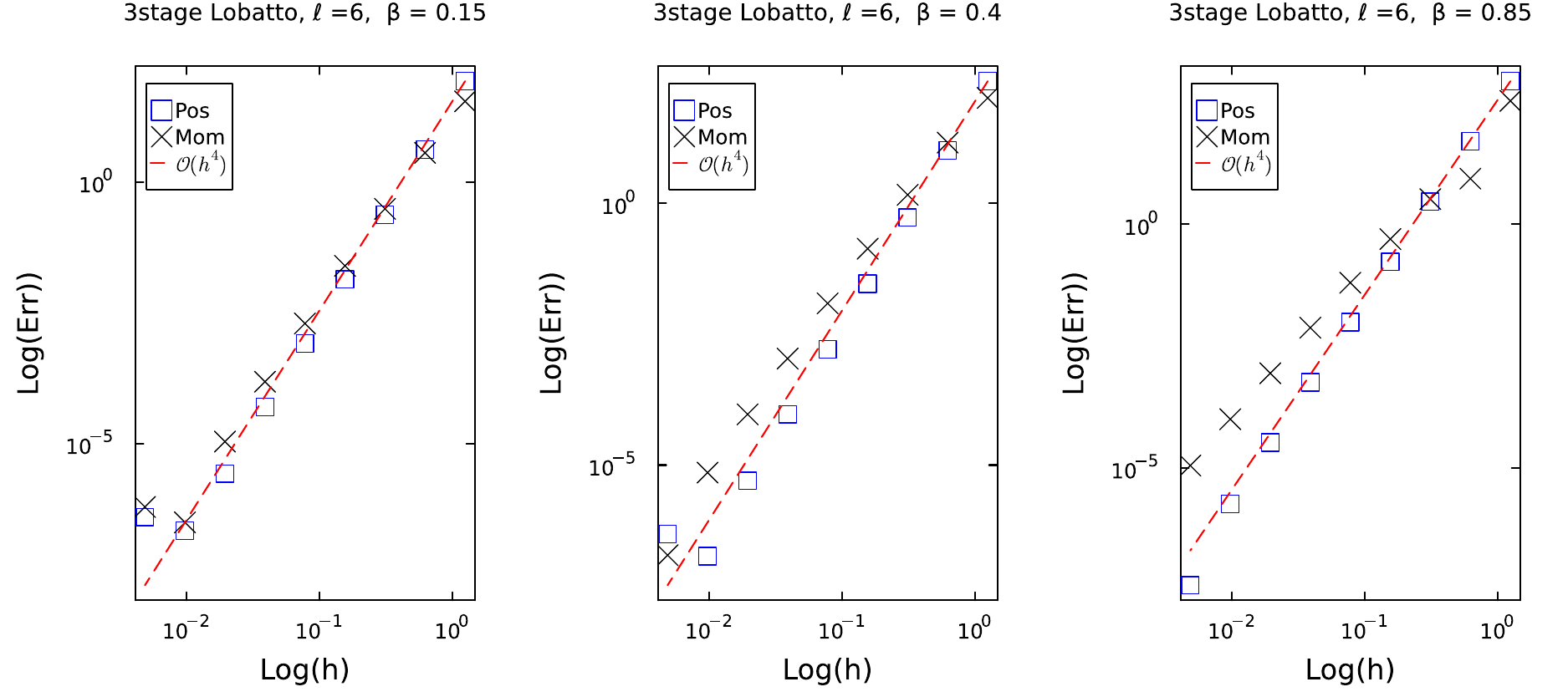}\\
\includegraphics[width=0.9\textwidth]{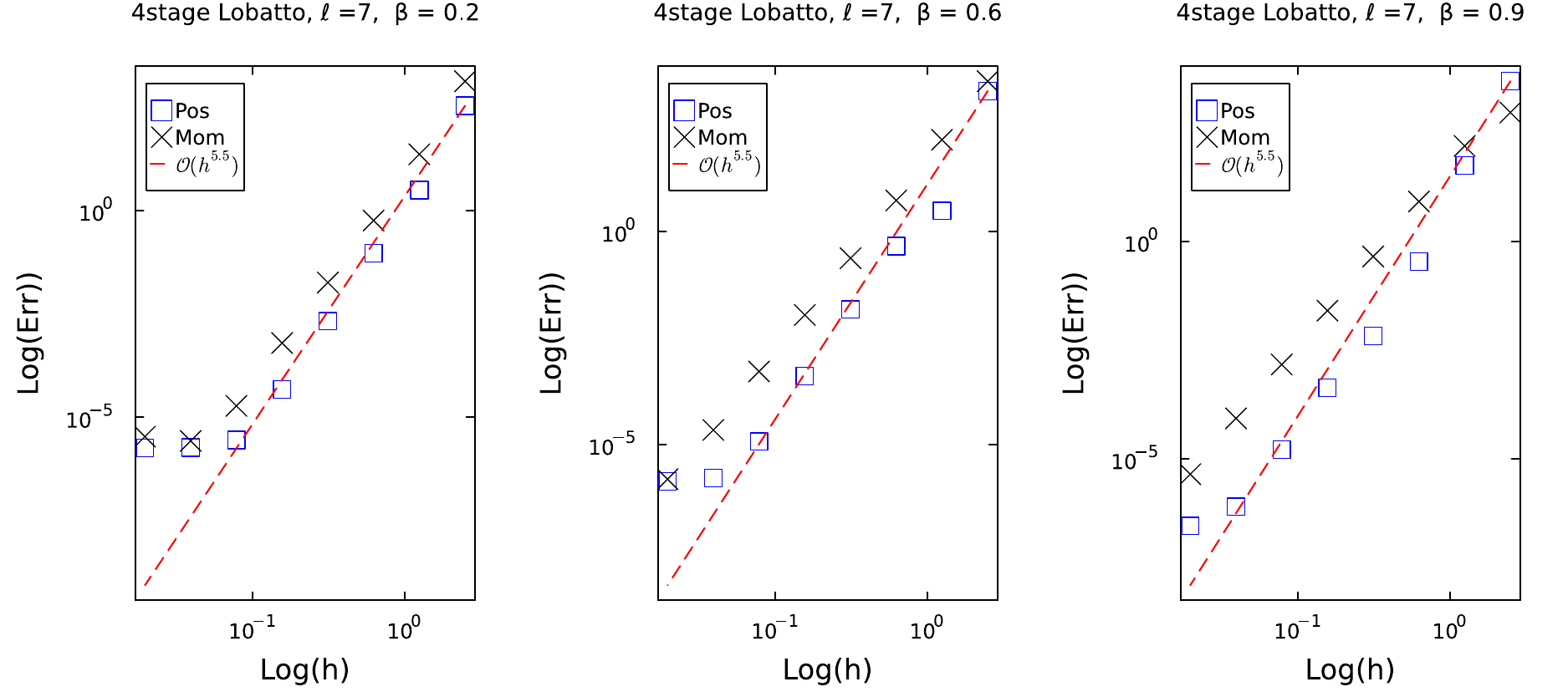}
\caption{BTE with analytical solution~\eqref{eq:Torfik3}. Convergence of position and 
  momentum errors for the FVI associated with the 2-stage Lobatto~IIIC (\textit{top}), 3-stage (\textit{middle}), and 4-stage (\textit{bottom}) Lobatto~IIIC methods over $t\in [0, 5]$ with $N = 2^i$; $i=2,\ldots,10$ for various values of $\ell$ and $\beta$.}
\label{fig:covergence_all}
\end{figure}
\subsection{Note on RKCQ based on Gauss methods}
The analysis of RKCQ based on Gauss methods, when applied to hyperbolic operators, has recently been reported in~\cite{LeMa}. 
\begin{remark}\label{rem:gaussRKCQ}
The definition of RKCQ in~\eqref{RKConQua} is based on evaluating the function \(f\) at the RK nodes \(\{t_k + c_i h\}_{i=1}^r\). On the other hand, the variations \(\delta x_k^i\) appearing in~\eqref{eq:variation_proof} involve not only the internal stages but also the main nodes, namely, the interval endpoints corresponding to \(c_1 = 0\) and \(c_r = 1\). These must coincide with the main-node variations in Lemma~\ref{VariationCommRK}. This is precisely why we employ the Lobatto~IIIC method in the discrete action~\eqref{DiscFracActionHORK}. For Gauss methods, all stages lie strictly in the interior of the interval, that is, \(0 < c_1 < \cdots < c_r < 1\). Consequently, the main-node values are not directly available, and one would need to interpolate the function \(f\).
 \end{remark}
In particular, with the  midpoint   $\setlength{\tabcolsep}{15pt}
\renewcommand{\arraystretch}{1.2}\begin{array}{c|c}
\frac{1}{2}   &  \frac{1}{2}   \\ \hline
     &  1
\end{array},$
the CQ generating function is  given by
$$\gamma_{\text{mid}}(z)=2\frac{1-z}{1+z},$$
which coincides with the generating function of the trapezoidal rule, a method for which convolution quadrature theory is already well established in~\cite{ErSa} and its theoretical order is $2$ at the midpoint stage, see also~\cite{LehSaBook} for more details.
We can define a modified midpoint CQ method (MIDCQ) using a local linear interpolation $\tilde f(t)=f_j+(t-t_j)(f_{j+1}-f_j)/h$ on each subinterval $[t_j,t_{j+1}]$, that is
$$\mathcal{D}_{-}^{\alpha} {f}_k\approx \sum_{j=0}^{k}w_{k-j}^{\beta}\tilde f\left(t_j+h/2\right)=\sum_{j=0}^{k}w_{k-j}^{\beta}\left( \frac{f_j+f_{j+1}}{2}\right),$$
where $w_{k-j}^{\beta}$ are the coefficients of the Taylor expansion of $(\gamma_{\text{mid}}(z)/h)^\beta$. This allows one to approximate the fractional derivatives at $t_k+h/2$ and use the variation at the main nodes which are needed to derive the midpoint scheme.
\begin{algorithm}{}
\begin{algorithmic}[1]
\State {\bf Initial data}: $N,\, h,\,\beta,\,w_n^{\beta},\, x_0,\, \m_{0}.$
\State {\bf Solve for} $x_1$ {\bf from} 
\[
\m_{0}=-D_1L_\dis(x_0,x_1)+ \frac{\rho\, h}{2}w_0^{\beta}\left(\frac{x_1+x_0}{2}\right)
\]
\State {\bf Initial points:} $x_0, x_1$
    \For {$k= 1: N-1$} 

\State  {\bf Solve for} $x_{k+1}$ {\bf from} 
\[
0=D_{2}L_\dis(x_{k-1},x_{k})+ D_1L_\dis(x_k,x_{k+1})- \frac{\rho\, h}{2} \left(\mathcal{D}^{\beta}_{-}{\bf x}_k+\mathcal{D}^{\beta}_{-}{\bf x}_{k-1}\right)
\]
    \EndFor
    \State  {\bf Output:} $(x_2, \ldots,x_{N}).$
\end{algorithmic}
\caption{FVI with MIDCQ}\label{alg:FractionalAlgorithm2}
\end{algorithm}
To test the proposed scheme, we apply Algorithm~\ref{alg:FractionalAlgorithm2} to the BTE with the forcing term and analytical solution defined in~\eqref{eq:Torfik3}. Figure~\ref{fig:covergence_midpoint} shows that  
 the FVI based on MIDCQ, as expected, still achieves full second-order convergence, confirming that its accuracy is not controlled by the value of $\beta$.

\begin{figure}[h]
\centering
\includegraphics[width=1\textwidth]{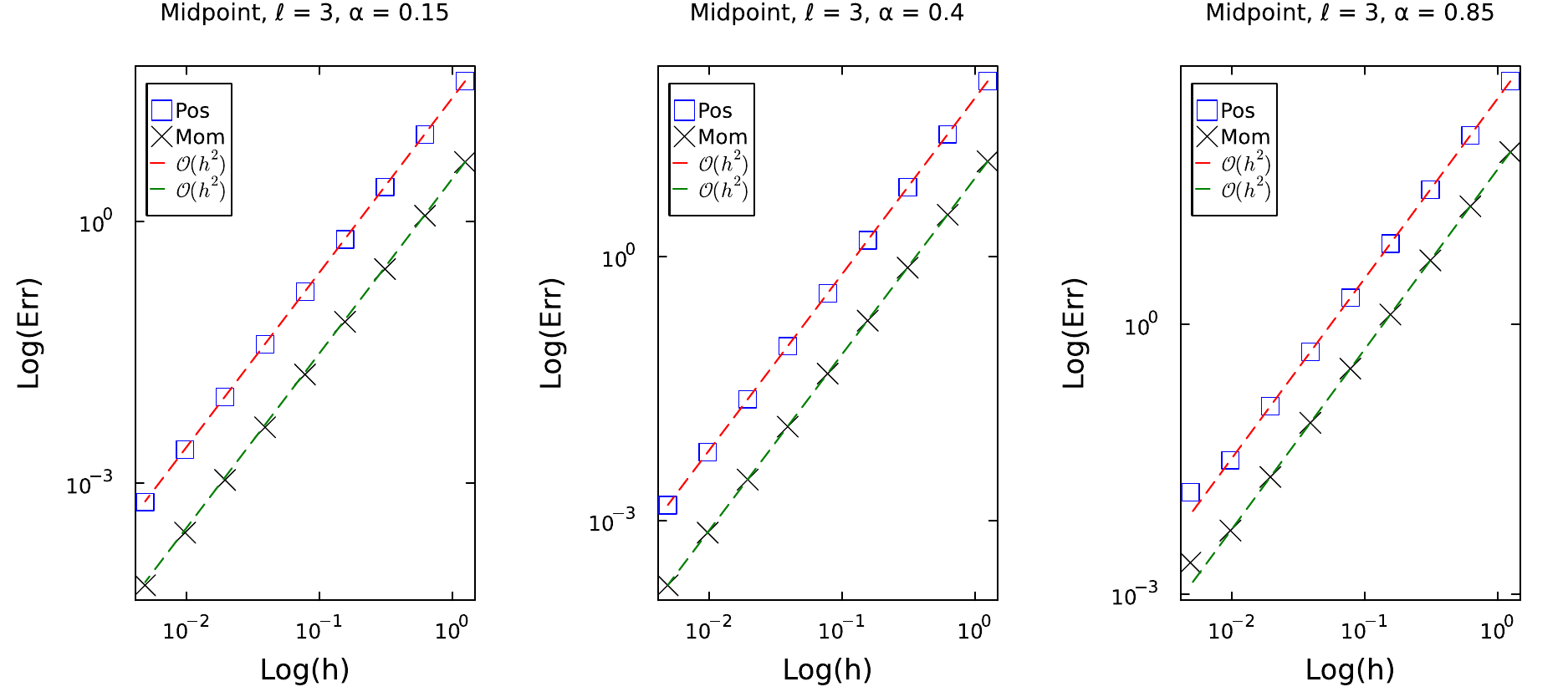}
\caption{BTE with analytical solution~\eqref{eq:Torfik3}. Convergence of position and 
  momentum errors for the FVI associated with the midpoint rule (Algorithm~\ref{alg:FractionalAlgorithm2}) over $t\in [0, 20]$ with $N = 2^i$; $i=4,\ldots,12$ for $\ell=3$ and various values of $\beta$.}
\label{fig:covergence_midpoint}
\end{figure}

\newpage

\section{Conclusion}\label{conclusion}
In this work, we developed a discrete formulation of the restricted Hamilton principle for fractionally damped Lagrangian system, obtained by combining two complementary discretization strategies for the action~\eqref{FracAction}: the conservative part $\Lc$ is approximated by higher-order Galerkin variational methods~\cite{HaLe13,MaWe,SinaSaake}, while the fractional part $\Lf$ is discretized by  Runge-Kutta convolution quadrature (RKCQ) based on Lobatto~IIIC methods~\cite{LuOs,BaLu,BaLo}. The idea is that the internal stages of an RKCQ scheme can be matched, node by node, with the internal nodes used by the Galerkin discretization of the conservative action, so that both contributions to the action are evaluated on a common stage grid $\{t_k + c_i h\}_{i=1}^r$ . Concretely, the paper makes the following contributions
\begin{enumerate}[leftmargin=2.5em]
  \item We constructed a class of fractional variational integrators whose stage
        structure is compatible with the underlying higher-order Galerkin
        discretization of the conservative action (Algorithm~\ref{alg:FractionalAlgorithm}, Theorem~\ref{thm:maintheorem}).
  \item We established the discrete identities required for the variational
        derivation in the convolution-quadrature setting, namely the discrete
        semigroup property (Lemma~\ref{lemma:Semi_group}) and the asymmetric discrete
        integration-by-parts formulas (Lemmas~\ref{ConvPropertiesRK1}--\ref{ConvPropertiesRK2}), which together yield the fractional discrete Euler--Lagrange equations.
  \item We proved, for the $2$-stage Lobatto~IIIC scheme applied to a quadratic
        Lagrangian, a rigorous second-order convergence result for
        $\beta=2\alpha\in(0,1]$ (Theorem~\ref{thm:theoretical_convergence}), and we demonstrated numerically
        fourth- and sixth-order accuracy for the $3$- and $4$-stage schemes. For the integer case $\beta=1$ the observed rate is consistent
        with the variational error theory of~\cite{SinaVerm}.
  \item We introduced a midpoint-based fractional variational integrator (MIDCQ),
        which likewise attains second-order accuracy.
\end{enumerate}
The numerical experiments further show that the proposed integrators reproduce the
expected energy-decay behaviour of the damped dynamics and that their performance
remains stable across a range of fractional orders~$\beta$.

A central advantage of the present RKCQ-based construction over the BDFCQ-based approach of~\cite{HaJiOb} is its full compatibility with the internal stages of the Galerkin variational discretization. Since BDFCQ is a multistep method, it acts only on the main nodal values and does not exploit the internal stage values that carry the higher-order information of the conservative approximation. By contrast, Lobatto~IIIC-based RKCQ naturally incorporates the same stage nodes $\{t_k + c_i h\}_{i=1}^r$
as the Galerkin discrete Lagrangian, so that both the conservative and fractional terms are evaluated on a common grid. This structural compatibility appears to be a key reason why the present approach avoids the saturation effects observed for BDFCQ-based constructions.\\
\noindent\textbf{Limitations.} The computational cost of the proposed method has two main components. For   the conservative part, the result is shown in~\cite{SinaSaake}. For the fractional part, the direct evaluation of the convolution terms requires the storage and processing of the full history of convolution weights $\{W_n^{(\alpha)}\}_{n=0}^{N}$, leading to $\mathcal{O}(N^2)$ computational cost and $\mathcal{O}(N)$ memory usage. Although fast and oblivious convolution techniques~\cite{LuOs} can in principle reduce this complexity to $\mathcal{O}(N \log^2 N)$, their incorporation into the present variational framework has not yet been explored.\\
\noindent\textbf{Future work.} Several directions merit further investigation. On the theoretical side, it would be important to develop a genuine fractional variational error analysis, extending the classical theory~\cite{MaWe} to augmented Lagrangians involving nonlocal operators. 
A second major goal is to establish rigorous higher-order convergence results for the cases 
 $r=3$ and  $r=4$, which are presently supported only by numerical evidence. Another natural extension is the construction of fractional variational integrators based on Gauss-type RKCQ, which would require a modified Galerkin framework due to the absence of boundary nodes; see Remark~\ref{rem:gaussRKCQ}.  An important direction is the rigorous establishment of passivity, or discrete dissipativity, for the proposed integrators. Further applications to physically relevant models, including fractionally damped PDE systems, also appear to be a promising direction.

\bibliographystyle{elsarticle-num.bst}
\bibliography{references}

@article{Agra,
author = {Om P. Agrawal},
title = {Formulation of {E}uler-{L}agrange equations for fractional variational problems},
journal = {J. Math. Anal. Appl.},
volume = {272},
number = {1},
pages = {368--379},
year = {2002}
}

@article{BaLo,
author = {Banjai, Lehel and L{\'{o}}pez-Fern{\'{a}}ndez, M},
title = {Efficient high-order algorithms for fractional integrals and fractional differential equations},
pages = {289--317},
journal = {Numerische Mathematik},
volume = {141},
year = {2019}
}

@article{BaLu,
author = {Banjai, Lehel and Lubich, Christian},
title = {An error analysis of {R}unge-{K}utta convolution quadrature},
pages = {483--496},
journal = {BIT Numerical Mathematics},
volume = {51},
year = {2011}
}

@article{BaLuMar,
author = {Banjai, Lehel and Lubich, Christian and Melenk, Jens Markus},
title = {Runge-{K}utta convolution quadrature for operators arising in wave propagation},
pages = {1--20},
journal = {Numerische Mathematik},
volume = {119},
year = {2011}
}

@incollection{Ca13,
  author       = {Campos, C{\'e}dric M},
  title        = {High order variational integrators: a polynomial approach},
  booktitle    = {Advances in Differential Equations and Applications},
  editor       = {Fernando Casas and Vicente Martínez},
  publisher    = {Springer, Heidelberg},
  year         = {2014},
  pages        = {249--258},
volume ={4},
series={SEMA SIMAI},
}

@article{Ca14,
author = {Cédric M. Campos and Sina Ober-Blöbaum and Emmanuel Trélat},
title = {High order variational integrators in the optimal control of mechanical systems},
journal = {Discrete and Continuous Dynamical Systems},
volume = {35},
number = {9},
pages = {4193--4223},
year = {2015}
}

@article{SinaVerm,
author = {Sina Ober-Blöbaum and Mats Vermeeren},
title = {Superconvergence of {G}alerkin variational integrators},
journal = {IFAC-PapersOnLine},
volume = {54},
number = {19},
pages = {327--333},
year = {2021}
}

@article{ShPaWo,
author = {Harsh Sharma and  Mayuresh Patil and Craig Woolsey},
title = {Energy-preserving variational integrators for forced
{L}agrangian systems},
journal = {Commun Nonlinear Sci Numer Simulat},
volume = {64},
number = {},
pages = {159--177},
year = {2018}
}

@article{SinaWeGaLe,
author = {Sina Ober-Blöbaum and Theresa Wenger and Tobias Gail and Sigrid Leyendecker},
title = {Variational multirate integrators},
journal = {IMA Journal of Numerical Analysis},
volume = {64},
number = {},
pages = {1--40},
year = {2021}
}

@article{WeSinaLe,
author = {Theresa Wenger and Sina Ober-Blöbaum and Sigrid Leyendecker},
title = {Variational integrators of mixed order for dynamical systems with multiple time scales and split potentials},
journal = {ECCOMAS Congress 2016--Proceedings of the 7th European Congress on Computational Methods in Applied Sciences and Engineering},
volume = {},
number = {},
pages = {1818--1831},
year = {2016}
}

@incollection{LeSina,
  author       = {Sigrid Leyendecker and Sina Ober-Blöbaum},
  title        = {A Variational Approach to Multirate Integration for Constrained Systems},
  booktitle    = {Multibody Dynamics},
  editor       = {Jean-Claude Samin and Paul Fisette},
  publisher    = {Springer Science+Business Media Dordrecht},
  year         = {2013},
  pages        = {97--121},
volume ={28},
series={Computational Methods in Applied Sciences},
}

@article{Cresson2,
author = {Jacky Cresson},
title = {Fractional embedding of differential operators and {L}agrangian systems},
journal = {J. Math. Phys.},
 volume = {48},
    number = {3},
    pages = {033504},
    year = {2007},
}

@incollection{Cresson3,
  author       = {Jacky Cresson},
  title        = {Fractional variational embedding and {L}agrangian formulations of dissipative partial differential equations},
  booktitle    = {Fractional Calculus in Analysis, Dynamics and Optimal Control},
  editor       = {},
  publisher    = {Nova Publishers, New York},
  year         = {2013},
  pages        = {65--127},
volume ={},
series={Mathematics Research Developments},
}

@book{TheBook,
   title =     {Fractional Integrals and Derivatives. Theory and Applications},
   author =    {Stefan G. Samko and Anatoly A. Kilbas and Oleg I. Marichev},
   publisher = {Gordon},
   year =      {1993},
   edition =   {1},
}

@book{TheBook2,
   title =     {Theory and Applications of Fractional Differential Equations},
   author =    {Anatoly A. Kilbas and Hari M. Srivastava and Juan J. Trujillo},
   publisher = {Elsevier},
   year =      {2006},
   series =    {North-Holland Mathematics Studies},
   edition =   {1},
   volume =    {204},
}

@article{Riewe,
  title = {Nonconservative {L}agrangian and {H}amiltonian mechanics},
  author = {Fred Riewe},
  journal = {Phys. Rev. E},
  volume = {53},
  number= {2},
  pages = {1890--1899},
  year = {1996},
  publisher = {American Physical Society},
}

@article{Riewe2,
  title = {Mechanics with fractional derivatives},
  author = {Fred Riewe},
  journal = {Phys. Rev. E},
  volume = {55},
  number= {3},
  pages = {3581--3592},
  year = {1997},
}

@article{SinaSaake,
  title = {Construction and analysis of higher order {G}alerkin variational integrators},
  author = {Ober-Blöbaum, Sina and Saake, Nils},
  journal = {Adv. Comput. Math.},
  volume = {41},
  pages = {955--986},
  year = {2015}
}

@article{MaWe, 
author={Marsden, J. E. and West, M.},
title={Discrete mechanics and variational integrators},
volume={10},
journal={Acta Numerica},
publisher={Cambridge University Press},
year={2001}, 
pages={357--514}}

@article{LuOs,
       author = {Lubich, C. and Ostermann, A.},
        title = "{Runge-{K}utta methods for parabolic equations and convolution quadrature}",
      journal = {Mathematics of Computation},
         year = {1993},
       volume = {60},
       number = {201},
        pages = {105--131},
}

@article{Lubich1,
author = {Lubich, C.},
title = {Discretized fractional calculus},
journal = {SIAM Journal on Mathematical Analysis},
volume = {17},
number = {3},
pages = {704--719},
year = {1986}
}

@article{Lubich2,
       author = {Lubich, C},
        title = "{Convolution quadrature and discretized operational calculus I and II}",
      journal = {Numerische Mathematik},
         year = {1988},
       volume = {52},
        pages = {129--145,  413--425},
}

@article{Lubich3,
       author = {Lubich, C},
        title = {On the multistep time discretization of linear initial-boundary value problems and their boundary integral equations},
      journal = {Numerische Mathematik},
         year = {1994},
       volume = {67},
        pages = {365--389},
}

@article{LeMa,
       author = {Lehel Banjai and Matteo Ferrari},
        title = {Runge-{K}utta convolution quadrature based on {G}auss methods},
      journal = {Numerische Mathematik},
         year = {2024},
       volume = {156},
        pages = {719--750},
}

@article{Leok2011,
author = {Leok, Melvin and Shingel, Tatiana},
title = {General techniques for constructing variational integrators},
journal = {Frontiers of Mathematics in China},
volume = {7},
number = {2},
pages = {273--303},
year = {2012},
}

@book{Kai,
   author =    {Kai Diethelm},
   title =     {The Analysis of Fractional Differential Equations. An Application-Oriented Exposition Using Differential Operators of Caputo Type},
   publisher = {Springer-Verlag Berlin Heidelberg},
   year =      {2010},
   series =    {Lecture Notes in Mathematics},
volume ={2004},
   edition =   {1},
}

@book{Kaibook2016,
   author =    {Dumitru Baleanu and Kai Diethelm and Enrico Scalas and Juan J Trujillo},
   title =     {Fractional Calculus: Models and Numerical Methods},
   publisher = {World Scientific},
   year =      {2016},
   series =    {Series on Complexity, Nonlinearity and Chaos},
volume ={3},
   edition =   {2},
}

@article{JiOb1,
title = {A fractional variational approach for modelling dissipative mechanical systems: continuous and discrete settings},
author = {Fernando Jiménez and Sina Ober-Blöbaum},
journal = {IFAC-PapersOnLine},
volume = {51},
number = {3},
pages = {50--55},
year = {2018}
}

@article{JiOb2,
author = {Jiménez, Fernando  and Ober-Blöbaum, Sina},
title = "{Fractional damping through restricted calculus of variations}",
journal = {J. Nonlinear Sci.},
volume = {31},
pages = {46},
year = {2021}
}

@article{HaJiOb,
author = {Hariz Belgacem, Khaled and Jiménez, Fernando  and Ober-Blöbaum, Sina},
title = {Fractional variational integrators based on convolution quadrature},
journal = {J. Nonlinear Sci.},
volume = {35},
number = {2},
pages = {38},
year = {2025}
}

@article{HaLe13,
author = {Hall, James  and Leok, Melvin},
title = {Spectral variational integrators},
journal = {Numerische Mathematik},
volume = {130},
pages = {681--740},
number ={4},
year = {2015}
}

@book{HaWa,
   author =    {Ernst Hairer and Gerhard Wanner},
   title =     {Solving Ordinary Differential Equations II. Stiff and Differential-Algebraic Problems},
   publisher = {Springer Berlin, Heidelberg},
   year =      {2010},
number = {14},
   series =    {Springer Series in Computational Mathematics},
   edition =   {2},
}

@book{LuGeWa,
    AUTHOR = {Hairer, Ernst and Lubich, Christian and Wanner, Gerhard},
     TITLE = {Geometric numerical integration. Structure-preserving algorithms for ordinary differential equations},
    SERIES = {Springer Series in Computational Mathematics},
    VOLUME = {31},
   EDITION = {2},
 PUBLISHER = {Springer-Verlag, Berlin},
      YEAR = {2006},
}

@book{Raff2023,
    AUTHOR = {Raffaele D'Ambrosio},
     TITLE = {Numerical Approximation of Ordinary Differential Problems. From Deterministic to Stochastic Numerical Methods},
    SERIES = {UNITEXT},
    VOLUME = {148},
 PUBLISHER = {Springer Nature Switzerland AG},
      YEAR = {2023},
}

@book{SerFer2016,
    AUTHOR = {Sergio Blanes and Fernando Casas},
     TITLE = {A Concise Introduction to Geometric Numerical Integration},
    SERIES = {Monographs and Research Notes in Mathematics},
    VOLUME = {},
 PUBLISHER = {CRC Press, New York},
      YEAR = {2016},
}

@book{Oldham,
   title = {The Fractional Calculus. Theory and Applications of Differentiation and Integration to Arbitrary Order},
   author =    {Keith B. Oldham and Jerome Spanier},
   publisher = {Academic Press, New York, London},
   volume =    {111},
   year =      {1974},
}

@book{AbMa,
   author =    {Abraham, R. and  Marsden, J. E.},
   title =     {Foundations of Mechanics},
   publisher = {Benjamin/Cummings Publishing Company},
   year =      {1978}
}

@article{Torvik,
    author = {Torvik, P. J. and Bagley, R. L.},
    title = {On the appearance of the fractional derivative in the behavior of real materials},
    journal = {Journal of Applied Mechanics},
    volume = {51},
    number = {2},
    pages = {294--298},
    year = {1984},
}

@book{Igor,
   title =     {Fractional Differential Equations. An Introduction to Fractional Derivatives, Fractional Differential Equations, to Methods of Their Solution and Some of Their Applications},
   author =    {Igor Podlubny},
   publisher = {Academic Press},
   year =      {1998},
   series =    {Mathematics in Science and Engineering},
   edition =   {1st},
   volume =    {198},
}

@article{Bagley,
author = {R. L. Bagley and R. A. Calico},
title = "{Fractional order state equations for the control of viscoelastically
damped structures}",
journal = {J. Guid. Control Dyn.},
volume = {14},
number = {2},
pages = {304--311},
year = {1991},
}

@article{Magin1,
author = {R.L. Magin},
title = "{Fractional calculus in bioengineering, Part 2}",
journal = {Journal of Biomechanics},
volume = {37},
number = {5},
pages = {703--711},
year = {2004},
}

@article{Magin2,
author = {R.L. Magin},
title = "{Fractional calculus models of complex dynamics in biological tissues}",
journal = {J. Comput. Math. Appl.},
volume = {59},
number = {5},
pages = {1586--1593},
year = {2010},
}

@article{Bonilla,
author = {B. Bonilla and M. Rivero and L. Rodríguez-Germá and J.J. Trujillo},
title = "{Fractional differential equations as alternative models to nonlinear differential equations}",
journal = {J. Appl. Math. Comput.},
volume = {187},
number = {1},
pages = {79--88},
year = {2007},
}

@book{Hilfer,
   title =     {Applications of Fractional Calculus in Physics},
   author =    {R. Hilfer},
   publisher = {World Scientific, River Edge, New Jersey},
   year =      {2000},
   series =    {},
   edition =   {},
}

@book{MarRat,
   title =     {Introduction to Mechanics and Symmetry. A Basic Exposition of Classical Mechanical Systems},
   author =    {Jerrold E. Marsden and Tudor S. Ratiu},
   year =      {1999},
   series =    {Texts in Applied Mathematics},
 publisher = {Springer Science+Business Media,  New York},
 volume =    {17},
   edition =   {2},
}

@article{Klimek,
author = {M. Klimek},
title = "{Fractional sequential mechanics--models with symmetric fractional derivative}",
journal = {Czechoslovak Journal of Physics},
volume = {51},
number = {12},
pages = {1348--1354},
year = {2001},
}

@article{Baleanu,
author = {D. Baleanu and S.I. Muslih},
title = "{Lagrangian formulation of classical fields within {R}iemann-{L}iouville
fractional derivatives}",
journal = {Phys. Scripta},
volume = {72},
number = {2--3},
pages = {119--121},
year = {2005},
}

@book{Tarasov,
    author = {V. E. Tarasov},
    title = {Fractional Dynamics. Applications of Fractional Calculus to Dynamics of Particles, Fields and Media},
series = {Nonlinear Physical Science},
    publisher = {Springer Berlin Heidelberg} ,
    year = {2010},
}

@book{BoXuHu,
    author = {Boling Guo and Xueke Pu and Fenghui Huang},
    title = {Fractional Partial Differential Equations and Their Numerical Solutions},
    publisher = {World Scientific Publishing} ,
    year = {2015},
}

@book{HandbookNum,
    author = {George Em Karniadakis},
    title = {Handbook of Fractional Calculus with Applications},
volume ={3},
    publisher = {De Gruyter} ,
    year = {2019},
}

@article{Lidia2015,
author = {Aceto, Lidia and Magherini, Cecilia and Novati, Paolo},
title = {On the construction and properties of $m$-step methods for {FDE}s},
journal = {SIAM Journal on Scientific Computing},
volume = {37},
number = {2},
pages = {A653--A675},
year = {2015},
}

@article{ErSa,
author = {Hasan Eruslu and Francisco Javier Sayas},
title = "{Polynomially bounded error estimates for trapezoidal rule
convolution quadrature}",
journal = {Computers and Mathematics with Applications},
volume = {79},
number = {6},
pages = {1634--1643},
year = {2020},
}

@book{LehSaBook,
    author ={Lehel Banjai and Francisco Javier Sayas} ,
    title ={Integral Equation Methods for Evolutionary PDE. A Convolution Quadrature Approach} ,
series ={Springer Series in Computational Mathematics},
volume ={59},
    publisher = {Springer Nature, Switzerland},
    year = {2022}
}

@article{Bauer,
    author = {P. S. Bauer},
    title = {Dissipative dynamical systems},
    journal = {Proc. Natl. Acad. Sci.},
    year = {1931},
volume ={5},
pages = {311--314}
}

@book{chala,
   title =     {Fractional Calculus with Applications in Mechanics. Vibrations and Diffusion Processes},
   author =    {Teodor M. Atanacković and Stevan Pilipović and Bogoljub Stanković and Dušan Zorica},
   publisher = {Wiley-ISTE},
series ={Mechanical Engineering and Solid Mechanics},
   year =      {2014},
   edition =   {},
}

@article{talbot_79,
    author = {Talbot, A.},
    title = {The Accurate Numerical Inversion of {L}aplace Transforms},
    journal = {IMA Journal of Applied Mathematics},
    volume = {23},
    number = {1},
    pages = {97--120},
    year = {1979},
}

@article{weideman_2006,
author = {Weideman, J. A. C.},
title = {Optimizing {T}albot's Contours for the Inversion of the Laplace Transform},
journal = {SIAM Journal on Numerical Analysis},
volume = {44},
number = {6},
pages = {2342--2362},
year = {2006}
}

@article{Sawar_Hussain_Ayaz_2025, 
title={Secure Image Transmission Using Fractional Variable Order Memristive Hyperchaotic System With Nonlinear Synchronization},
volume={2}, 
number={1}, 
journal={Advances in Analysis and Applied Mathematics}, 
author={Sawar, Shah and Hussain, Sadam and Ayaz, Muhammad}, 
year={2025},
pages={32--43} }
\end{document}